\pdfoutput=1
\documentclass[12pt]{amsart}
\usepackage{float}
\usepackage{amsmath,amsfonts,amssymb,mathabx,latexsym,mathtools}
\usepackage{enumitem}
\usepackage[usenames, dvipsnames]{xcolor}
\usepackage{hyperref}
\usepackage{ytableau}
\usepackage{centernot}
\usepackage{tikz-cd}
\usepackage{tikz}
\usetikzlibrary{decorations.markings}
\usepackage{stmaryrd}
\usepackage{wrapfig}
 \usepackage{url}
\usetikzlibrary{calc, shapes, backgrounds,arrows,positioning,plotmarks}
\tikzset{>=stealth',
  head/.style = {fill = white, text=black},
  plaque/.style = {draw, rectangle, minimum size = 10mm}, 
  pil/.style={->,thick},
  junct/.style = {draw,circle,inner sep=0.5pt,outer sep=0pt, fill=black}
  }
  
\setlength{\evensidemargin}{0in} 
\setlength{\textheight}{8.5in}  
\setlength{\textwidth}{6.5in}  
\setlength{\topmargin}{0in}  
\setlength{\oddsidemargin}{0in}


\newtheorem{theorem}{Theorem}[section]
\newtheorem{lemma}[theorem]{Lemma}

\newtheorem{proposition}[theorem]{Proposition}
\newtheorem{corollary}[theorem]{Corollary}

\theoremstyle{definition}

\newenvironment{example}
  {\pushQED{\qed}\examplex}
  {\popQED\endexamplex}

\theoremstyle{remark}
\newtheorem{remark}[theorem]{Remark}

\numberwithin{equation}{section}


\usepackage[colorinlistoftodos]{todonotes}



\newcommand \mdCR[1]{{\tt md}(#1)}
\newcommand \maCR[1]{{\tt ma}(#1)}

\newcommand{\code}{{\sf code}}

\newcommand\mydef[1]{{\bf #1}}

\newcommand{\FSVT}{{\sf FSVT}}
\newcommand{\FSVD}{{\sf FSVD}}
\newcommand{\maxexcited}{D_{\tt bot}}

\newcommand{\SVT}{{\sf SVT}}

\newcommand{\Ess}{{\sf Ess}}

\newcommand{\NE}{{\sf NE}}

\newcommand{\SE}{{\sf SE}}

\newcommand{\val}{{\rm val}}


\newcommand{\dem}{\delta}

\newcommand{\Pipes}{{\sf Pipes}}

\newcommand{\KPipes}{\overline{\sf Pipes}}

\newcommand{\sv}{{\sf sv}}

\newcommand{\trunc}{{\sf trunc}}


\newcommand{\excited}{{\sf ExcitedYD}}
\newcommand{\kexcited}{{\sf KExcitedYD}}

\newcommand{\wt}{{\tt wt}}



\newcommand{\GL}{{\rm GL}}

\newcommand{\Sym}{S}

\newcommand{\reg}{{\rm reg}}

\newcommand{\maxsizediag}{\rho_d}
\newcommand{\maxsizeantidiag}{\rho_a}

\newcommand{\diagstat}{f_d}

\newcommand{\antidiagstat}{f_a}

\begin{document}

\title[Regularity of ladder determinantal ideals]{ Castelnuovo-Mumford regularity of\\ ladder determinantal varieties and \\patches of Grassmannian Schubert varieties}

\author{Jenna Rajchgot }
\address[JR]{
Dept.~of Mathematics and Statistics, 
McMaster University, 
Hamilton, ON  \newline \indent  L8S 4K1, CANADA}
\email{rajchgoj@mcmaster.ca}
\thanks{Jenna Rajchgot was partially supported by NSERC Grant RGPIN-2017-05732.}

\author{Colleen Robichaux}
\address[CR]{
Dept.~of Mathematics,
University of California, Los Angeles
Los Angeles, CA \newline \indent 90095, USA}
\email{robichaux@ucla.edu}
\thanks{Colleen Robichaux was supported by the NSF GRFP under Grant No. DGE 1746047 and NSF RTG Grant No. DMS 1937241.}

\author{Anna Weigandt}
\address[AW]{Dept.~of Mathematics, Massachusetts Institute of Technology, Cambridge, MA 02142, USA}
\email{weigandt@mit.edu}
\thanks{Anna Weigandt was partially supported by Bill Fulton's Oscar Zariski Distinguished Professor Chair funds.}

\date{December 1, 2022}

\begin{abstract}
 We give degree formulas for Grothendieck polynomials indexed by vexillary permutations and $1432$-avoiding permutations via tableau combinatorics.  These formulas generalize a formula  for degrees of symmetric Grothendieck polynomials which appeared in previous  joint work of the authors with Y.~Ren and A.~St.~Dizier.
 
 We apply our formulas to compute Castelnuovo-Mumford regularity of classes of generalized determinantal ideals. In particular, we give combinatorial formulas for the regularities of all one-sided mixed ladder determinantal ideals. 
We also derive formulas for the regularities of certain Kazhdan-Lusztig ideals, including those coming from open patches of Schubert varieties in Grassmannians. This provides a correction to a conjecture of Kummini-Lakshmibai-Sastry-Seshadri (2015).  
\end{abstract}

\maketitle
\section{Introduction}
\label{sec:intro}
\emph{Castelnuovo-Mumford regularity} is a fundamental measure of the complexity of a graded module. 
In this paper, we use Schubert calculus techniques to provide explicit, easy-to-compute, combinatorial formulas for the Castelnuovo-Mumford regularity of  classes of generalized determinantal ideals.  The classes we treat include one-sided mixed ladder determinantal ideals and ideals defining patches of Schubert varieties in Grassmannians. These two classes of ideals are connected by work of  N. Gonciulea and C. Miller \cite{GM}.

Let $\Bbbk$ be a field. Let $S = \Bbbk[x_1,\dots, x_n]$ be a polynomial ring with the standard grading, $\text{deg}(x_i) = 1$, and let  $I\subseteq S$ be a homogeneous ideal. 
When $S/I$ is Cohen-Macaulay, as is the case throughout this paper, the regularity of $S/I$ is known to satisfy 
\begin{equation}\label{eq:regFormula}
    \reg(S/I)=\deg  K(S/I;t)-{\rm ht}_S(I),
\end{equation}
 where $K(S/I;t)$ is the \emph{K-polynomial} of $S/I$ and ${\rm ht}_S(I)$ is the height of $I$ in $S$.  Using this fact, the authors, in joint work with Y.~Ren and A.~St.~Dizier \cite{RRRSW}, gave a combinatorial formula which computes the regularity of coordinate rings of \emph{Grassmannian matrix Schubert varieties}.  The key technical ingredient was a  formula of C.~Lenart \cite{Le00} regarding \emph{symmetric Grothendieck polynomials}. In the present paper, we extend our work from \cite{RRRSW}.

\subsection{Summary of results}
  We give a combinatorial formula for degrees of Grothendieck polynomials indexed by \emph{1432-avoiding} permutations (see Theorem~\ref{thm:1432}) and \emph{vexillary (2143-avoiding)} permutations (see Theorem~\ref{thm:vexDeg}).  
These formulas in turn, allow us to compute the regularity of the corresponding matrix Schubert varieties.  In particular, our formula in the vexillary setting provides a formula for Castelnuovo-Mumford regularity of  one-sided mixed ladder determinantal varieties (see Section~\ref{sec:regLad} for details).  Our formulas naturally generalize the Grassmannian formula of \cite{RRRSW} (see Section~\ref{subsec:grConn} for details).

\begin{theorem}
\label{thm:1stmainTheorem}
	Given $w\in \Sym_n$ so that $w$ is 1432-avoiding, \[\reg(S/I_w)=\sum_{k=1}^n \maxsizediag(\sigma_k(w)).\]
\end{theorem}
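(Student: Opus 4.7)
The plan is to apply the general regularity identity \eqref{eq:regFormula} together with the combinatorial degree formula that is the subject of Theorem~\ref{thm:1432} (the forthcoming degree formula for Grothendieck polynomials indexed by 1432-avoiding permutations). The Schubert determinantal ideal $I_w$ is Cohen--Macaulay, its height in $S$ equals the Coxeter length $\ell(w)$, and by Knutson--Miller its $K$-polynomial is the Grothendieck polynomial $\mathfrak{G}_w$. Substituting these into \eqref{eq:regFormula} gives
\[
    \reg(S/I_w) \;=\; \deg \mathfrak{G}_w \;-\; \ell(w).
\]
Hence the theorem reduces to the assertion that $\deg \mathfrak{G}_w - \ell(w) = \sum_{k=1}^{n} \maxsizediag(\sigma_k(w))$, which is precisely what Theorem~\ref{thm:1432} should supply. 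In this sense, Theorem~\ref{thm:1stmainTheorem} is a direct corollary of the degree formula, and the real mathematical content lives in Theorem~\ref{thm:1432}.

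To prove the degree formula itself, I would extend the approach of \cite{RRRSW}, which treated the Grassmannian case by combining \eqref{eq:regFormula} with Lenart's set-valued tableau expansion of symmetric Grothendieck polynomials. Here one cannot apply Lenart directly because $\mathfrak{G}_w$ is generally non-symmetric, so the first step is to decompose a 1432-avoiding $w$ into its column slices $\sigma_k(w)$, which I expect to be Grassmannian (or Grassmannian-like) permutations naturally attached to each column of the Rothe diagram of $w$. The goal is then to show that a maximum-degree contribution to $\mathfrak{G}_w$ localizes column by column, so that on each slice one can apply the Grassmannian formula of \cite{RRRSW} and identify the degree excess over $\ell(\sigma_k(w))$ as $\maxsizediag(\sigma_k(w))$. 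Summing over $k$, the lengths telescope to $\ell(w)$ and the excesses to the desired statistic.

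The main obstacle is precisely this column-localization: for a non-Grassmannian permutation, $K$-theoretic pipe dreams (or, dually, set-valued tableaux) achieving maximal degree generically couple across columns, and one must use the 1432-avoidance hypothesis to forbid such coupling. Concretely, I would analyze which pipe dreams in $\KPipes(w)$ attain maximal $|D|$, argue via a local move that any ``cross-column'' excitation creates a 1432 pattern in $w$, and conclude that the maximal-degree pipe dreams are exactly those obtained by freely stacking a maximal-degree choice on each $\sigma_k(w)$ independently. The defined macros \kexcited\ and \kskewexcited\ in the preamble suggest that the authors indeed proceed via $K$-theoretic excited Young diagrams; a bijection between maximal excitations of $\sigma_k(w)$ and diagonal configurations counted by $\maxsizediag$ would then close the argument.

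An alternative route, worth attempting in parallel, is to first prove the vexillary degree formula (Theorem~\ref{thm:vexDeg}) and then reduce the 1432-avoiding case to it by a length-preserving ``vexillarification'' of $w$ that preserves the column slices $\sigma_k(w)$ up to the statistic $\maxsizediag$. If such a reduction exists it would shorten the argument, but I would expect the direct pipe-dream/excited-diagram proof to be more transparent and to give stronger control over the individual terms in the sum.
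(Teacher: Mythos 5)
Your first paragraph is exactly the paper's proof of this theorem: $S/I_w$ is Cohen--Macaulay, $\mathrm{ht}(I_w)=\#D(w)=\ell(w)$ by Fulton, $K(S/I_w;t)=\mathfrak G_w(1-t,\dots,1-t)$ by Buch/Knutson--Miller, and the statement reduces to the degree formula of Theorem~\ref{thm:1432}. (One small point you elide: one must also note that $\deg\mathfrak G_w(1-t,\dots,1-t)=\deg\mathfrak G_w(x_1,\dots,x_n)$, which the paper justifies by the sign-alternation of the homogeneous components of $\mathfrak G_w$ --- otherwise the top-degree terms could cancel under the substitution.) As a proof of Theorem~\ref{thm:1stmainTheorem} modulo Theorem~\ref{thm:1432}, this is correct and identical to the paper's.

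Your sketch of Theorem~\ref{thm:1432} itself, however, diverges from the paper and has genuine gaps. First, $\sigma_k(w)$ is not a ``column slice'' and is not a Grassmannian permutation: it is the set of cells of $D(w)$ strictly southeast of the dot $(k,w_k)$, one such subset for each position $k\in[n]$, and these subsets overlap heavily. Consequently there is no sense in which ``the lengths telescope to $\ell(w)$'': the formula has a single summand $\#D(w)$ (the minimal degree of $\mathfrak G_w$) plus an excess $\sum_k\maxsizediag(\sigma_k(w))$, not a sum of lengths of slice permutations. A column-by-column localization with independent Grassmannian pieces is therefore not the right structure, and no argument is offered for why maximal-degree configurations would decouple. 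The paper instead works with the Fan--Guo expansion of $\mathfrak G_w$ over flagged set-valued Rothe tableaux $\FSVD(w)$ (valid precisely for $1432$-avoiding $w$): the lower bound $\deg\mathfrak G_w\geq \#D(w)+\sum_k\maxsizediag(\sigma_k(w))$ comes from an explicit tableau $T_w$ built by adjoining, for each $k$, one extra entry to every cell of a chosen maximal diagonal path in $\sigma_k(w)$; the matching upper bound is proved by induction on $\ell(w_0)-\ell(w)$, passing from $w$ to $ws_r$ at the first ascent $r$ (which preserves $1432$-avoidance), tracking how both $\max\{\#T\}$ and the diagonal statistic change under this move. Your alternative suggestion of reducing to the vexillary case does not appear in the paper either; the two degree formulas are proved by separate inductions (the vexillary one via transition). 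So the content you defer to Theorem~\ref{thm:1432} is real, and the route you propose for it would need to be replaced.
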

\noindent Defined in Section~\ref{sec:background}, these $\sigma_k(w)$ are certain subsets of the \emph{Rothe diagram} $D(w)$ of $w$, and $\rho_d(\sigma_k(w))$ denotes the size of the largest diagonal in $\sigma_k(w)$.

\begin{example}\label{ex:triangDecomp1432}
Let $w=1462375$.  In the images below, the elements of $\sigma_k(w)$ are shaded, with a maximal diagonal path in $\sigma_k(w)$ marked with $\times$ for $k\in[5]$. For $k=6,7$, we have $\sigma_k(w)=\emptyset$, and so we omit the figures.  
\[
\begin{tikzpicture}[scale=.3]
	\draw (0,0) rectangle (7,7);
	\draw[fill=gray!50] (1,5) rectangle (2,6);
	\draw [fill=gray!50](1,4) rectangle (2,5);
	\draw [fill=gray!50](2,5) rectangle (3,6);
	\draw[fill=gray!50] (2,4) rectangle (3,5);
	\draw [fill=gray!50](4,4) rectangle (5,5);
	\draw[fill=gray!50] (4,1) rectangle (5,2);
	\filldraw (0.5,6.5) circle (.5ex);
	\draw[line width = .2ex] (0.5,0) -- (0.5,6.5) -- (7,6.5);
	\filldraw (3.5,5.5) circle (.5ex);
	\draw[line width = .2ex] (3.5,0) -- (3.5,5.5) -- (7,5.5);
	\filldraw (5.5,4.5) circle (.5ex);
	\draw[line width = .2ex] (5.5,0) -- (5.5,4.5) -- (7,4.5);
	\filldraw (1.5,3.5) circle (.5ex);
	\draw[line width = .2ex] (1.5,0) -- (1.5,3.5) -- (7,3.5);
	\filldraw (2.5,2.5) circle (.5ex);
	\draw[line width = .2ex] (2.5,0) -- (2.5,2.5) -- (7,2.5);
	\filldraw (6.5,1.5) circle (.5ex);
	\draw[line width = .2ex] (6.5,0) -- (6.5,1.5) -- (7,1.5);
	\filldraw (4.5,0.5) circle (.5ex);
	\draw[line width = .2ex] (4.5,0) -- (4.5,0.5) -- (7,0.5);
	\node[] at (3.5,-1.5){$k=1$};
    \node[] at (1.5,5.5){$\times$};
	\node[] at (2.5,4.5){$\times$};
	\node[] at (4.5,1.5){$\times$};
\end{tikzpicture}
\hspace{1.5em}
\begin{tikzpicture}[scale=.3]
	\draw (0,0) rectangle (7,7);
	\draw (1,5) rectangle (2,6);
	\draw (1,4) rectangle (2,5);
	\draw (2,5) rectangle (3,6);
	\draw (2,4) rectangle (3,5);
	\draw[fill=gray!50] (4,4) rectangle (5,5);
	\draw[fill=gray!50] (4,1) rectangle (5,2);
	\filldraw (0.5,6.5) circle (.5ex);
	\draw[line width = .2ex] (0.5,0) -- (0.5,6.5) -- (7,6.5);
	\filldraw (3.5,5.5) circle (.5ex);
	\draw[line width = .2ex] (3.5,0) -- (3.5,5.5) -- (7,5.5);
	\filldraw (5.5,4.5) circle (.5ex);
	\draw[line width = .2ex] (5.5,0) -- (5.5,4.5) -- (7,4.5);
	\filldraw (1.5,3.5) circle (.5ex);
	\draw[line width = .2ex] (1.5,0) -- (1.5,3.5) -- (7,3.5);
	\filldraw (2.5,2.5) circle (.5ex);
	\draw[line width = .2ex] (2.5,0) -- (2.5,2.5) -- (7,2.5);
	\filldraw (6.5,1.5) circle (.5ex);
	\draw[line width = .2ex] (6.5,0) -- (6.5,1.5) -- (7,1.5);
	\filldraw (4.5,0.5) circle (.5ex);
	\draw[line width = .2ex] (4.5,0) -- (4.5,0.5) -- (7,0.5);
	\node[] at (3.5,-1.5){$k=2$};
	\node[] at (4.5,4.5){$\times$};
\end{tikzpicture}
\hspace{1.5em}
\begin{tikzpicture}[scale=.3]
	\draw (0,0) rectangle (7,7);
	\draw (1,5) rectangle (2,6);
	\draw (1,4) rectangle (2,5);
	\draw (2,5) rectangle (3,6);
	\draw (2,4) rectangle (3,5);
	\draw (4,4) rectangle (5,5);
	\draw (4,1) rectangle (5,2);
	\filldraw (0.5,6.5) circle (.5ex);
	\draw[line width = .2ex] (0.5,0) -- (0.5,6.5) -- (7,6.5);
	\filldraw (3.5,5.5) circle (.5ex);
	\draw[line width = .2ex] (3.5,0) -- (3.5,5.5) -- (7,5.5);
	\filldraw (5.5,4.5) circle (.5ex);
	\draw[line width = .2ex] (5.5,0) -- (5.5,4.5) -- (7,4.5);
	\filldraw (1.5,3.5) circle (.5ex);
	\draw[line width = .2ex] (1.5,0) -- (1.5,3.5) -- (7,3.5);
	\filldraw (2.5,2.5) circle (.5ex);
	\draw[line width = .2ex] (2.5,0) -- (2.5,2.5) -- (7,2.5);
	\filldraw (6.5,1.5) circle (.5ex);
	\draw[line width = .2ex] (6.5,0) -- (6.5,1.5) -- (7,1.5);
	\filldraw (4.5,0.5) circle (.5ex);
	\draw[line width = .2ex] (4.5,0) -- (4.5,0.5) -- (7,0.5);
	\node[] at (3.5,-1.5){$k=3$};
\end{tikzpicture}
\hspace{1.5em}
\begin{tikzpicture}[scale=.3]
	\draw (0,0) rectangle (7,7);
	\draw (1,5) rectangle (2,6);
	\draw (1,4) rectangle (2,5);
	\draw (2,5) rectangle (3,6);
	\draw (2,4) rectangle (3,5);
	\draw[] (4,4) rectangle (5,5);
	\draw[fill=gray!50] (4,1) rectangle (5,2);
	\filldraw (0.5,6.5) circle (.5ex);
	\draw[line width = .2ex] (0.5,0) -- (0.5,6.5) -- (7,6.5);
	\filldraw (3.5,5.5) circle (.5ex);
	\draw[line width = .2ex] (3.5,0) -- (3.5,5.5) -- (7,5.5);
	\filldraw (5.5,4.5) circle (.5ex);
	\draw[line width = .2ex] (5.5,0) -- (5.5,4.5) -- (7,4.5);
	\filldraw (1.5,3.5) circle (.5ex);
	\draw[line width = .2ex] (1.5,0) -- (1.5,3.5) -- (7,3.5);
	\filldraw (2.5,2.5) circle (.5ex);
	\draw[line width = .2ex] (2.5,0) -- (2.5,2.5) -- (7,2.5);
	\filldraw (6.5,1.5) circle (.5ex);
	\draw[line width = .2ex] (6.5,0) -- (6.5,1.5) -- (7,1.5);
	\filldraw (4.5,0.5) circle (.5ex);
	\draw[line width = .2ex] (4.5,0) -- (4.5,0.5) -- (7,0.5);
	\node[] at (3.5,-1.5){$k=4$};
	\node[] at (4.5,1.5){$\times$};
\end{tikzpicture}
\hspace{1.5em}
\begin{tikzpicture}[scale=.3]
	\draw (0,0) rectangle (7,7);
	\draw (1,5) rectangle (2,6);
	\draw (1,4) rectangle (2,5);
	\draw (2,5) rectangle (3,6);
	\draw (2,4) rectangle (3,5);
	\draw[] (4,4) rectangle (5,5);
	\draw[fill=gray!50] (4,1) rectangle (5,2);
	\filldraw (0.5,6.5) circle (.5ex);
	\draw[line width = .2ex] (0.5,0) -- (0.5,6.5) -- (7,6.5);
	\filldraw (3.5,5.5) circle (.5ex);
	\draw[line width = .2ex] (3.5,0) -- (3.5,5.5) -- (7,5.5);
	\filldraw (5.5,4.5) circle (.5ex);
	\draw[line width = .2ex] (5.5,0) -- (5.5,4.5) -- (7,4.5);
	\filldraw (1.5,3.5) circle (.5ex);
	\draw[line width = .2ex] (1.5,0) -- (1.5,3.5) -- (7,3.5);
	\filldraw (2.5,2.5) circle (.5ex);
	\draw[line width = .2ex] (2.5,0) -- (2.5,2.5) -- (7,2.5);
	\filldraw (6.5,1.5) circle (.5ex);
	\draw[line width = .2ex] (6.5,0) -- (6.5,1.5) -- (7,1.5);
	\filldraw (4.5,0.5) circle (.5ex);
	\draw[line width = .2ex] (4.5,0) -- (4.5,0.5) -- (7,0.5);
	\node[] at (3.5,-1.5){$k=5$};
	\node[] at (4.5,1.5){$\times$};
\end{tikzpicture}\]
Theorem~\ref{thm:1stmainTheorem} computes $\reg( S/I_w)=3+1+0+1+1=6$.
\end{example}

Theorem~\ref{thm:1stmainTheorem} is a direct consequence of the following:
\begin{theorem}\label{thm:1432}
	If $w\in \Sym_n$ is $1432$-avoiding, then
	\[\deg(\mathfrak G_w)=\#D(w) + \sum_{k=1}^n \maxsizediag(\sigma_k(w)).\]
\end{theorem}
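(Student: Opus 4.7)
The plan is to reduce to a K-theoretic combinatorial model for $\mathfrak{G}_w$ available when $w$ is $1432$-avoiding, then identify the top-degree contribution within it. Because the low-degree part of $\mathfrak{G}_w$ equals the Schubert polynomial $\mathfrak{S}_w$, with $\deg(\mathfrak{S}_w)=\#D(w)$, one has $\deg(\mathfrak{G}_w)=\#D(w)+e(w)$, where $e(w)$ measures the maximum number of ``K-theoretic bonus boxes'' carried by any term in a positive expansion. The target formula therefore reduces to showing $e(w)=\sum_{k=1}^{n}\rho_d(\sigma_k(w))$.

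First I would invoke a K-skew-excited-diagram model (the object $\kskewexcited$ in the paper's notation, or equivalently a $1432$-adapted pipe-dream expansion) for $\mathfrak{G}_w$. For $1432$-avoiding $w$, the Rothe diagram decomposes naturally into the strips $\sigma_k(w)$ indexed by the pivots $k$, and I expect $1432$-avoidance to force K-moves confined to distinct strips to be non-interacting. Granting this decoupling, the maximization splits as $e(w)=\sum_k e_k(w)$, with $e_k(w)$ the maximum bonus attainable using only K-moves inside $\sigma_k(w)$.

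The core combinatorial step is then $e_k(w)=\rho_d(\sigma_k(w))$. Achievability follows by choosing a longest strict-southeast diagonal chain of cells in $\sigma_k(w)$ and performing a single K-move at each cell; the strict SE condition guarantees no two moves block one another. For the upper bound, any family of simultaneously applicable K-moves in $\sigma_k(w)$ projects injectively onto a diagonal chain, since two K-moves sharing a row or column cannot coexist in a valid K-skew-excited configuration---this is where the local geometry of K-moves is exploited.

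The main obstacle I anticipate is establishing the decoupling claim rigorously: showing that $1432$-avoidance exactly rules out K-move interactions across distinct strips $\sigma_k(w)$. I would attack this by a direct pattern-avoidance argument, exhibiting that any hypothetical cross-strip conflict produces an occurrence of $1432$ among the values of $w$. Once the decoupling and the per-strip identity are in hand, summing the maxima and adding $\#D(w)$ yields the theorem, and Theorem~\ref{thm:1stmainTheorem} follows immediately via \eqref{eq:regFormula} together with Cohen-Macaulayness of matrix Schubert varieties.
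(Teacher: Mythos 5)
There is a genuine gap, and it sits exactly where you flagged it: the ``decoupling'' step and the per-strip upper bound. The sets $\sigma_k(w)$ are not disjoint strips of $D(w)$ --- they are heavily overlapping regions (each is the portion of $D(w)$ strictly southeast of the pivot $(k,w_k)$), so the statement that $D(w)$ ``decomposes'' into them and that K-moves in distinct strips are non-interacting does not parse as written. Correspondingly, in the extremal object a single cell of $D(w)$ typically carries several bonus entries attributable to several different values of $k$ (in the paper's Example~\ref{ex:buildSkewT}, the cell $(6,5)$ of $D(1462375)$ ends up holding $\{3,4,5,6\}$, with extra entries contributed at $k=1,4,5$), so $e(w)$ cannot be split by assigning each bonus to a unique strip without a genuinely new bookkeeping argument. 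Your upper bound within a strip (``simultaneously applicable K-moves project injectively onto a diagonal chain, since two K-moves sharing a row or column cannot coexist'') is also unjustified in the relevant model: in the flagged set-valued Rothe tableaux of Fan--Guo (Theorem~\ref{thm:321svtGroth}), which is the available positive expansion here, distinct boxes in the same row or column can each carry extra entries, and one box can carry many. So neither half of the inequality $e(w)\le\sum_k\rho_d(\sigma_k(w))$ is established by the sketch.

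The lower bound half of your plan is sound and matches the paper: one exhibits a tableau in $\FSVD(w)$ with $\#D(w)+\sum_k\rho_d(\sigma_k(w))$ entries by layering, for each $k$, one extra entry on each cell of a maximal diagonal path in $\sigma_k(w)$ (Lemma~\ref{lemma:1432FlagFill} and Proposition~\ref{prop:maxtabsize}). But the paper obtains the upper bound by an entirely different mechanism that you would need to replace your decoupling claim with: induction on $\ell(w_0)-\ell(w)$ via the first ascent $r$ of $w$, using that $ws_r$ is again $1432$-avoiding, that there is an entry-count--preserving (or incrementing) map $\FSVD(w)\to\FSVD(ws_r)$, and that the statistic $\#D(\cdot)+\sum_k\rho_d(\sigma_k(\cdot))$ changes under $w\mapsto ws_r$ by exactly $0$ or $1$ according to whether every maximal diagonal path in $\sigma_r(w)$ meets row $r+1$ (Lemmas~\ref{lemma:maxtabflag} and~\ref{lemma:maxdiag}). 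Sandwiching the degree between the statistic for $w$ and for $ws_r$ then forces equality. To salvage your route you would have to prove directly that no tableau in $\FSVD(w)$ exceeds the stated count, which is the hard content of the theorem and is not supplied by the injectivity heuristic.
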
 
For $w\in S_n$, $\#D(w)$ is the Coxeter length of $w$. See Section~\ref{sec:background} for the definitions of Grothendieck polynomials $\mathfrak G_w$ and Rothe diagrams $D(w)$.
The proof of Theorem~\ref{thm:1432} appears in Section~\ref{sec:proofs}.

\begin{example}
Returning to $w$ as in Example~\ref{ex:triangDecomp1432},
Theorem~\ref{thm:1432} with Theorem~\ref{thm:1stmainTheorem} give that 
\begin{align*}
\deg(\mathfrak G_w)&= \#D(w) + \reg( S/I_w)= 6 + (3+1+0+1+1)=12.\qedhere\end{align*}
\end{example}

We have similar diagrammatic regularity and degree formulas in the vexillary setting.

\begin{theorem} \label{thm:2ndmainTheorem}
	Given $v\in \Sym_n$ so that $v$ is vexillary, \[\reg(S/I_v)=\sum_{k=1}^n \maxsizeantidiag(\tau_k(v)).\]
\end{theorem}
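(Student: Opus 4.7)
The plan is to mirror the derivation of Theorem~\ref{thm:1stmainTheorem} from Theorem~\ref{thm:1432}: reduce the regularity statement to a degree statement for the Grothendieck polynomial $\mathfrak{G}_v$ via the Cohen-Macaulay identity (\ref{eq:regFormula}), and then invoke Theorem~\ref{thm:vexDeg}.

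First I would recall the standard facts about matrix Schubert varieties, due to Fulton and to Knutson--Miller: $S/I_v$ is a Cohen-Macaulay domain whose height in $S$ equals the Coxeter length $\ell(v)=\#D(v)$, and whose $\mathbb{Z}$-graded K-polynomial (with the standard grading on $S$) is the Grothendieck polynomial $\mathfrak{G}_v$. These are exactly the commutative-algebraic inputs used for the 1432-avoiding case, so no new input is required in the vexillary setting. Combining them with (\ref{eq:regFormula}) gives
\[
  \reg(S/I_v) = \deg(\mathfrak{G}_v) - \#D(v).
\]

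Second, I would substitute the vexillary degree formula of Theorem~\ref{thm:vexDeg},
\[
  \deg(\mathfrak{G}_v) = \#D(v) + \sum_{k=1}^{n} \maxsizeantidiag(\tau_k(v)),
\]
whereupon $\#D(v)$ cancels and the desired equality $\reg(S/I_v)=\sum_{k=1}^{n}\maxsizeantidiag(\tau_k(v))$ drops out. This finishes the proof.

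In this chain of reductions, the only nontrivial step is Theorem~\ref{thm:vexDeg}; Theorem~\ref{thm:2ndmainTheorem} is then an immediate corollary, in perfect parallel with how Theorem~\ref{thm:1stmainTheorem} follows from Theorem~\ref{thm:1432}. Accordingly, the real obstacle lies upstream, in establishing the vexillary degree formula itself: one must identify the top-degree monomials of $\mathfrak{G}_v$ by means of a tableau or excited-Young-diagram model adapted to vexillary permutations (playing the role that Lenart's formula \cite{RRRSW} plays in the Grassmannian case), and verify that the antidiagonal decomposition $\{\tau_k(v)\}$ of the Rothe diagram $D(v)$ is the correct combinatorial gadget to record this top degree. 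Once that is in hand, the present theorem is purely formal.
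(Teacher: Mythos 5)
Your proposal is correct and follows exactly the paper's own derivation: Fulton's results give Cohen--Macaulayness and $\mathrm{ht}(I_v)=\#D(v)$, the Knutson--Miller/Buch identification gives the K-polynomial, and Theorem~\ref{thm:vexDeg} then yields the formula after cancelling $\#D(v)$. The only detail you elide is that the K-polynomial is $\mathfrak G_v(1-t,\dots,1-t)$ rather than $\mathfrak G_v$ itself, so one must note (as the paper does) that this substitution preserves degree because the coefficients within each homogeneous component of $\mathfrak G_v$ share a common sign.
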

\noindent Here, the $\tau_k(v)$ are certain subsets of the {Young diagram} $\lambda(v)$ associated to $v$ and $\rho_a(\tau_k(v))$ denotes the size of the largest antidiagonal in $\tau_k(v)$.  See Section~\ref{sec:background} for details.

\begin{example}
\label{ex:adV}
Let $v=169247358$.
In the diagrams below, the elements of $\tau_i(v)$ are shaded for $i=1,2,3$ in $\lambda(v)$.  In particular, a maximal size antidiagonal path contained in $\tau_i(v)$ has been marked with $\times$'s. 
\[
\ytableausetup
{boxsize=0.9em}
\begin{ytableau}
 *(gray!50) &  *(gray!50) &  *(gray!50)& *(gray!50)\times  &*(gray!50) & *(gray!50)   \\
 *(gray!50)&  *(gray!50)&  *(gray!50)\times & *(gray!50)\\
  *(gray!50)&  *(gray!50)\times \\
 *(gray!50)\times 
\end{ytableau}
\hspace{2em}
\begin{ytableau}
\, & \, & \,& \, &  *(gray!50) &*(gray!50)\times   \\
\, & \, & \, & \,\\
*(gray!50) &  *(gray!50)\times \\
 *(gray!50)\times 
\end{ytableau}
\hspace{2em}
\begin{ytableau}
\,  & \, & \, & \, & \, & \,  \\
\,  & \, & \, & \, \\
\, &  *(gray!50)\times \\
\,
\end{ytableau}
\]
Applying Theorem~\ref{thm:2ndmainTheorem}, we have $\reg(S/I_v)=4+3+1=8$.
\end{example}

Theorem~\ref{thm:2ndmainTheorem} is a direct consequence of the following:

\begin{theorem}\label{thm:vexDeg}
	Suppose $v\in \Sym_n$ is vexillary. Then 
	\[\deg(\mathfrak G_v)=\#D(v)+\sum_{i=1}^n\maxsizeantidiag(\tau_k(v)). \]
\end{theorem}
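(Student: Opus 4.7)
The plan is to parallel the proof of Theorem~\ref{thm:1432}, replacing the Rothe diagram and diagonals by the Young diagram $\lambda(v)$ and antidiagonals. The key combinatorial input is the tableau formula (due to Knutson--Miller--Yong, with a K-theoretic refinement by Hudson--Ikeda--Matsumura--Naruse) expressing $\mathfrak{G}_v$, for vexillary $v$, as a signed generating function over flagged set-valued tableaux of shape $\lambda(v)$ with flag determined by $v$. Each tableau $T$ contributes a monomial of degree $|T|$, so
\[
\deg(\mathfrak{G}_v) \leq |\lambda(v)| + E,
\]
where $E$ is the maximum number of \emph{extra} entries (beyond one per cell) over all such tableaux. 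Since $|\lambda(v)| = \#D(v)$ for vexillary $v$, it suffices to prove $E = \sum_k \maxsizeantidiag(\tau_k(v))$ and to check that no cancellation occurs at the top degree.

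Next I would stratify the extras by label value: let $e_k(T)$ denote the number of extra entries of value $k$, so $E = \max_T \sum_k e_k(T)$. The structural claim is that $\tau_k(v)$ is exactly the subset of cells of $\lambda(v)$ that can legally accommodate an extra $k$---this is determined by the flag bound and by the positions already forced to contain $k$ in the minimum tableau. Column-strictness together with weak row-increase of the set-valued fillings then forces any collection of extras with common value $k$ to lie along an antidiagonal inside $\tau_k(v)$, giving the upper bound $e_k(T) \leq \maxsizeantidiag(\tau_k(v))$.

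Finally, I would construct an explicit tableau $T^\ast$ achieving all these bounds simultaneously by placing extras along a choice of maximum antidiagonal in each $\tau_k(v)$. Because each $\tau_k(v)$ occupies a prescribed region of $\lambda(v)$ and the antidiagonal placements for distinct $k$ can be chosen compatibly with the row and column constraints, this $T^\ast$ is a valid flagged set-valued tableau with $|T^\ast| = |\lambda(v)| + \sum_k \maxsizeantidiag(\tau_k(v))$. I expect the main obstacle to be this simultaneous achievability step---guaranteeing that the independently chosen maximum antidiagonals fit together into a single tableau---together with confirming that the monomial produced by $T^\ast$ does not cancel against other top-degree contributions. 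I anticipate that an inductive argument on $k$, mirroring the treatment of diagonals in Theorem~\ref{thm:1432}, will resolve both points.
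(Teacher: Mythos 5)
Your overall strategy---a lower bound from an explicit maximal tableau plus an upper bound from counting ``extra'' entries---is genuinely different from the paper's proof of Theorem~\ref{thm:vexDeg}, which never bounds tableaux directly. The paper instead inducts on the position of the maximal corner of $D(v)$ using Lascoux's transition recurrence: for vexillary $v$ one has the cancellation-free identity $\mathfrak G_v=x_r\mathfrak G_{v_P}+(1-x_r)\mathfrak G_{v_C}$, so $\deg(\mathfrak G_v)=\max\{\deg(\mathfrak G_{v_P}),\deg(\mathfrak G_{v_C})\}+1$ (Lemma~\ref{lemma:accdegree}), and the work is in showing $\antidiagstat$ satisfies the same recursion (Lemmas~\ref{lemma:dommaximalcorner} and~\ref{lemma:accmaximal corner}). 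Your lower-bound half is sound and is essentially the paper's construction of $U_v$ in Section~\ref{section:tableau}, and the no-cancellation point is easy since every term of \eqref{equation:gVex} with $\#T=m$ carries the sign $(-1)^{m-|\lambda|}$.

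The upper bound, however, has a genuine gap: the termwise inequality $e_k(T)\le\maxsizeantidiag(\tau_k(v))$ is false, because the index $k$ on $\tau_k(v)$ is a \emph{rank-function value} $F_v(i,j)$, whereas your $e_k$ is indexed by \emph{tableau entry values}, and these two stratifications do not line up. Concretely, for $v=169247358$ (Figure~\ref{figure:incvex}) the filling $F_v$ only takes values $1,2,3$, so $\tau_k(v)=\emptyset$ for $k\ge 4$; yet the maximal tableau $U_v$ of Example~\ref{ex:buildVexT} contains the sets $\{3,4,5,6\}$ in cell $(3,2)$ and $\{4,5,6\}$ in cell $(4,1)$, hence two extra entries equal to $5$, giving $e_5(U_v)=2>0=\maxsizeantidiag(\tau_5(v))$. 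For the same reason, $\tau_k(v)$ is not ``the set of cells that can accommodate an extra $k$.'' What is true, and provable from the row and column conditions, is that the cells carrying a non-minimal occurrence of a fixed value form an antidiagonal path in $\lambda(v)$; but these antidiagonals sit in regions determined by the flag $\phi(v)$ and by the other entries of $T$, not in the sets $\tau_k(v)$, and converting that family of bounds into the single quantity $\sum_k\maxsizeantidiag(\tau_k(v))$ is precisely the missing combinatorial content. Note also that the proof of Theorem~\ref{thm:1432} you propose to mirror does not argue value-by-value either: its upper bound comes from an induction on Coxeter length via the first ascent, showing $\deg(\mathfrak G_w)\le\deg(\mathfrak G_{ws_r})$ and using Lemmas~\ref{lemma:maxtabflag} and~\ref{lemma:maxdiag} to control when the inequality is strict. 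A correct tableau-based proof of the vexillary case would likely need an analogous induction rather than the direct stratified count.
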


The proof of Theorem~\ref{thm:vexDeg} appears in Section~\ref{sec:proofs}.
\begin{example}
Returning to $v$ as in Example~\ref{ex:adV},
Theorem~\ref{thm:vexDeg} with Theorem~\ref{thm:2ndmainTheorem} give that 
\begin{align*}
\deg(\mathfrak G_v)&= \#D(v) + \reg( S/I_v)= 13 + (4+3+1)=21.\qedhere\end{align*}
\end{example}

We also provide formulas for the regularity of certain homogeneous \emph{Kazhdan-Lusztig ideals} $J_{v,w}$.
When $v$ is a $321$-avoiding permutation, we provide a formula in terms of \emph{pipe dreams} (see Proposition~\ref{prop:pipeKLreg}).  
When $v$ and $w$ are both Grassmannian, we provide an easily computable formula by computing the degree of the corresponding K-polynomial in terms of a vexillary Grothendieck polynomial. 

\subsection{Connections to the literature}
Concurrently with this work the third author, with O.~Pechenik and D.~Speyer \cite{PSW}, derived a combinatorial formula for the regularity of matrix Schubert varieties indexed by arbitrary permutations.  In contrast with our diagrammatic combinatorics, the formula in \cite{PSW} is phrased in terms of a new statistic on permutations. 
In work released around the same time as the present paper,  E.~Hafner \cite{Hafner} obtained a new proof of the vexillary case of \cite{PSW} in terms of bumpless pipe dreams. Her results illustrate the connection from the formula in \cite{PSW} to our vexillary formula through bumpless pipe dreams. A.~Yong also has recent work related to the present paper, where he studies regularities of tangent cones of Schubert varieties \cite{YongReg}.

For certain mixed ladder determinantal ideals, regularity formulas can be deduced through $a$-invariant formulas of S.~Ghorpade and C.~Krattenthaler \cite{GK15}. The ladder determinantal ideals they consider have certain restrictions on their rank conditions. Consequently, their one-sided ideals are special cases of the ideals that we consider. See Section~\ref{sec:regLad} for further discussion.

Our formula for regularity of patches of Grassmannian Schubert varieties (Theorem \ref{thm:KLSSCorrection}) provides a correction to a conjecture of M. Kummini, V. Lakshmibai, P. Sastry, and C. S. Seshadri (see \cite[Conjecture 7.5]{KLSS}). This correction was conjectured in our previous paper (see \cite[Conjecture 5.6]{RRRSW}).

\subsection{Outline of the paper}
In Section~\ref{sec:background}, we introduce the necessary combinatorial background. In Section~\ref{section:tableau}, we give tableau interpretations of our Grothendieck degree formulas. 
We provide proofs of our degree formulas for vexillary and 1432-avoiding permutations in Section~\ref{sec:proofs}.
Section~\ref{section:CM} describes the connection between Grothendieck polynomials and regularity and proves our main theorems. 
Section~\ref{sec:vexGr} applies these regularity formulas to correct the conjecture of \cite{KLSS}. 
Section~\ref{sec:regLad} further applies our main theorems to give combinatorial formulas for the regularity of one-sided ladder determinantal ideals.

\subsection*{Acknowledgements}
We would like to thank  Philippe Nadeau and Alexander Yong for helpful comments and conversations. We would also like to thank Elisa Gorla for helpful communications about the literature. Finally, we would like to thank the anonymous referee for their helpful comments.

\section{Combinatorial degree formulas}
\label{sec:background}

\subsection{Grothendieck polynomials}
\label{subsec:grothbackground}
 
	We start by defining \emph{Grothendieck polynomials}, introduced by A. Lascoux and M.~P. Sch\"utzenberger \cite{LS82} in their study of the K-theory of the complete flag variety.
	Let $\Sym_n$ denote the \mydef{symmetric group} on $n$ letters, i.e., the set of bijections from $[n]:=\{1,2,\ldots, n\}$ to itself.  We write permutations in one-line notation unless otherwise specified and define $w_i:=w(i)$ for $i\in[n]$. 
	The symmetric group $S_n$ acts on $\mathbb Z[x_1,\dots,x_n]$ by $w\cdot f(x_1,\ldots,x_n)=f(x_{w_1},x_{w_2},\dots,x_{w_n})$.
	Let $s_i\in S_n$ be the simple transposition $(i \ i+1)$,  written here in cycle notation.  
	For $f\in\mathbb{Z}[x_1,x_2,\ldots,x_n]$, define
 \[\partial_if=\frac{f-s_if}{x_i-x_{i+1}}, \text{ and } \pi_if=\partial_i(1-x_{i+1})f.\]
  
	We recursively define Grothendieck polynomials as follows.	Let  $w_0=n \, n-1 \, \ldots \, 1$ be the \mydef{longest permutation} in $\Sym_n$. Define 
   \[\mathfrak G_{w_0}(\mathbf x)=\mathfrak G_{w_0}(x_1,x_2,\ldots,x_n)=x_1^{n-1}x_2^{n-2}\cdots x_{n-1}.\]
   For $w\neq w_0$ there exists some $i\in[n-1]$ such that $w_i>w_{i+1}$. Then we define $\mathfrak G_{ws_i}(\mathbf x)=\pi_i(\mathfrak G_w(\mathbf x))$.
	 Since the $\pi_i$ satisfy the same braid and commutation relations as the simple transpositions, $\mathfrak G_w(\mathbf x)$ is well defined.

Write $x\oplus y:=x+y-xy$.  We define the \mydef{double Grothendieck polynomials} using the same recurrence, starting from the initial condition 
\[\mathfrak G_{w_0}(\mathbf x;\mathbf y)=\prod_{1<i+j\leq n}(x_i\oplus y_j).\]  Here, the $\partial_i$'s only act on the $x_i$'s, leaving the $y_j$'s fixed.

\subsection{Permutations}
	First we recall some background on the symmetric group with \cite{Manivel} as a reference.  The \mydef{permutation matrix} of $w$, which we also denote by $w$, is the $0,1$-matrix with $1$'s at $(i,w_i)$ for all $i\in[n]$ and $0$'s elsewhere. 	To each permutation we associate a \mydef{rank function} defined by \[r_w(i,j)=\#\{(k,w_{k}) \,:  \, k\leq i,w_{k} \leq j \}.\]
	
	The \mydef{Rothe diagram} of $w\in \Sym_n$ is the subset
	\[ D(w)= \{(i, j)\in [n]\times [n] \,:  \,  w_i > j \text{ and } w^{-1}_j > i\}.\]
	Visually, $D(w)$ is the set of cells remaining in the $n\times n$ grid after plotting the points $(i,w_i)$ for each $i\in[n]$ and striking out any cells which appear weakly below or weakly to the right of these points, as shown in Example~\ref{ex:Rothe}.

Let $\ell(w):=\# D(w)$ denote the {\bf Coxeter length} of $w$.
The \mydef{code} of $w$ is the tuple $\code(w)=(c_1,\ldots,c_n)$ where $c_i$ records the number of cells in the $i$th row of $D(w)$. Let 
\[{L}(\code(w)):=\max\{i\in[n] \,:  \, c_i>0\}.\]
The \mydef{essential set} of $w$ is the subset of $D(w)$
\[\Ess(w)=\{(i,j)\in D(w) \,:  \, (i+1,j),(i,j+1)\not \in D(w)\}.\] 
The dominant component ${\sf Dom}(w)$ is the connected component of $D(w)$ containing $(1,1)$.

\begin{example}\label{ex:Rothe}
For 
$w=72416835 \in {S}_{8}$, $D(w)$ is the following: 
\[
\begin{tikzpicture}[scale=.4]
\draw (0,0) rectangle (8,8);

\draw[line width = .1ex] (0,7)--(6,7);
\draw[line width = .1ex] (1,8) -- (1,5);
\draw[line width = .1ex] (2,8) -- (2,7);
\draw[line width = .1ex] (3,8) -- (3,7);
\draw[line width = .1ex] (4,8) -- (4,7);
\draw[line width = .1ex] (5,8) -- (5,7);
\draw[line width = .1ex] (6,8) -- (6,7);

\draw[line width = .1ex] (0,6) -- (1,6);
\draw[line width = .1ex] (0,5) -- (1,5);

\draw (2,5) rectangle (3,6);

\draw (2,2) rectangle (3,4);
\draw[line width = .1ex] (2,3) -- (3,3);

\draw (4,2) rectangle (5,4);
\draw[line width = .1ex] (4,3) -- (5,3);

\filldraw (6.5,7.5) circle (.5ex);
\draw[line width = .2ex] (6.5,0) -- (6.5,7.5) -- (8,7.5);
\filldraw (1.5,6.5) circle (.5ex);
\draw[line width = .2ex] (1.5,0) -- (1.5,6.5) -- (8,6.5);
\filldraw (3.5,5.5) circle (.5ex);
\draw[line width = .2ex] (3.5,0) -- (3.5,5.5) -- (8,5.5);
\filldraw (0.5,4.5) circle (.5ex);
\draw[line width = .2ex] (0.5,0) -- (0.5,4.5) -- (8,4.5);
\filldraw (5.5,3.5) circle (.5ex);
\draw[line width = .2ex] (5.5,0) -- (5.5,3.5) -- (8,3.5);
\filldraw (7.5,2.5) circle (.5ex);
\draw[line width = .2ex] (7.5,0) -- (7.5,2.5) -- (8,2.5);
\filldraw (2.5,1.5) circle (.5ex);
\draw[line width = .2ex] (2.5,0) -- (2.5,1.5) -- (8,1.5);
\filldraw (4.5,0.5) circle (.5ex);
\draw[line width = .2ex] (4.5,0) -- (4.5,0.5) -- (8,0.5);
\end{tikzpicture}.
\]
Here, we have
$\Ess(w)=\{(1,6),(3,1),(3,3),(6,3),(6,5)\}$, $\code(w)=(6,1,2,0,2,2,0,0)$, and ${\sf Dom}(w)=\{(1,i):i\in [6]\}\cup \{(2,1), (3,1)\}$.
\end{example}

A subset $D\subseteq [n]\times[n]$ is a \mydef{diagonal path} if \[D=\{(i_1,j_1),\ldots,(i_k,j_k)\,:  \, i_1<i_2<\cdots <i_k \text{ and } j_1<j_2<\cdots <j_k\}.\]
 Given $S\subseteq [n]\times[n]$ write $\maxsizediag(S)$ for the size of the largest diagonal path in contained $S$.
 
 A permutation $w\in \Sym_n$ is \mydef{$1432$-avoiding} if there does not exist a \emph{$1432$ pattern}, i.e., indices
$h<i<j<k$ such that $w$ has the pattern $w_h<w_k<w_j<w_i$. 
For example, $w=\underline{2}3\underline{7}1\underline{5}8\underline{4}6$ is not $1432$-avoiding; we underlined the positions of a $1432$ pattern. 
For $w$ $1432$-avoiding, let 
\[\sigma_k(w)=\{ (i,j)\in D(w) \,:  \, i>k \text{ and } j>w_k\},\] i.e., $\sigma_k(w)$ is the set of cells in $D(w)$ which are strictly southeast of $(k,w_k)$.  Example~\ref{ex:triangDecomp1432} gives an example of diagonal paths in $\sigma_k(w)$.



A \mydef{partition} $\lambda=(\lambda_1,\dots,\lambda_k)$ is a weakly decreasing sequence of non negative integers.  We write $|\lambda|=\lambda_1+\dots+\lambda_k$.  The \mydef{Young diagram} of a partition $\lambda$ is the set $\{(i,j)\in \mathbb Z_{>0}\times \mathbb Z_{>0}:1\leq j\leq \lambda_i\}$.  We often conflate Young diagrams with their partitions. Given partitions $\lambda$ and $\mu$, we write $\lambda\subseteq \mu$ to mean that the Young diagram of $\lambda$ is contained in the Young diagram of $\mu$.

Given $w\in S_n$, let $\mu(w)$ be the partition whose Young diagram is \[\bigcup_{(i,j)\in D(w)}[1,i]\times[1,j],\]
i.e., $\mu(w)$ is the smallest partition whose Young diagram contains $D(w)$.

A permutation $v\in \Sym_n$ is \mydef{vexillary} if it does not contain a \emph{$2143$ pattern}, i.e., indices
$i<j<k<l$ such that $v_j<v_i<v_l<v_k$. 
For example, $v=7\underline{2}58\underline{1}3\underline{6}\underline{4}$ is not vexillary since the underlined indices form a $2143$ pattern. 
A vexillary permutation $v$ has \mydef{shape} $\lambda(v)$, where $\lambda(v)$ is
$\code(v)=(c_1,\ldots,c_n)$ sorted into decreasing order.
A vexillary permutation $v$ has \mydef{flag}
\begin{align*}
    \phi(v)&=(\phi_1\leq \phi_2\leq\dots\leq \phi_m), \text{where}\\
    \phi_i=\max\{j \,:  \, &(j,k)\in \mu(v) \ \text{ lies in the same diagonal as } (i,\lambda_i(v))\}.
\end{align*}
Note that we can think of $\lambda(v)$ as the partition with
 the property that each of the diagonals of its Young diagram has the same number of cells as the corresponding diagonal of $D(v)$.
 Observe that $\phi(v)$ tells us how the positions of these boxes changed between $D(v)$ to $\lambda(v)$.

Fill the diagonals of $\lambda(v)$ with $r_v(i,j)$ for the corresponding cells $(i,j)\in D(v)$, so that the entries are (weakly) increasing along diagonals.  Write $F_v$ for this filling (see Figure~\ref{figure:incvex} for an example). Let \[\tau_k(v)=\{(i,j)\in \lambda(v) \,:  \, F_v(i,j)\geq k\}.\]

\ytableausetup{boxsize=1em}
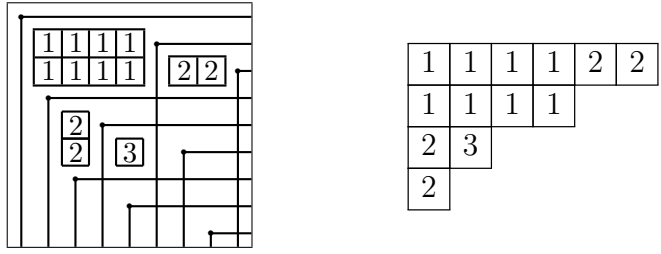
\begin{figure}
\label{fig:vex}
	\[\setlength{\unitlength}{.18mm}
	\begin{picture}(380,180)
		\put(0,15){\framebox(180,180)}
		\thicklines
		\put(10,185){\circle*{4}}
		\put(10,185){\line(1,0){170}}
		\put(10,185){\line(0,-1){170}}
		\put(110,165){\circle*{4}}
		\put(110,165){\line(1,0){70}}
		\put(110,165){\line(0,-1){150}}
		\put(170,145){\circle*{4}}
		\put(170,145){\line(1,0){10}}
		\put(170,145){\line(0,-1){130}}
		\put(30,125){\circle*{4}}
		\put(30,125){\line(1,0){150}}
		\put(30,125){\line(0,-1){110}}
		\put(70,105){\circle*{4}}
		\put(70,105){\line(1,0){110}}
		\put(70,105){\line(0,-1){90}}
		\put(130,85){\circle*{4}}
		\put(130,85){\line(1,0){50}}
		\put(130,85){\line(0,-1){70}}
		\put(50,65){\circle*{4}}
		\put(50,65){\line(1,0){130}}
		\put(50,65){\line(0,-1){50}}
		\put(90,45){\circle*{4}}
		\put(90,45){\line(1,0){90}}
		\put(90,45){\line(0,-1){30}}
		\put(150,25){\circle*{4}}
		\put(150,25){\line(1,0){30}}
		\put(150,25){\line(0,-1){10}}
		
		\put(20,135){\framebox(80,40)}
		\put(40,135){\line(0,1){40}}
		\put(60,135){\line(0,1){40}}
		\put(80,135){\line(0,1){40}}
		\put(20,155){\line(1,0){80}}

		\put(120,135){\framebox(40,20)}
		\put(140,135){\line(0,1){20}}

		\put(40,75){\line(0,1){40}}
		\put(40,95){\line(1,0){20}}
		\put(60,75){\line(0,1){40}}
		\put(40,95){\line(1,0){20}}
		\put(40,115){\line(1,0){20}}
		\put(40,75){\line(1,0){20}}

		\put(80,75){\line(1,0){20}}
		\put(80,95){\line(1,0){20}}
		\put(80,75){\line(0,1){20}}
		\put(100,75){\line(0,1){20}}

		\put(25,159){\small${1}$}
		\put(45,159){\small$1$}
		\put(65,159){\small$1$}
		\put(85,159){\small$1$}
		\put(25,138){\small$1$}
		\put(45,138){\small$1$}
		\put(65,138){\small$1$}
		\put(85,138){\small$1$}
		
		\put(125,138){\small$2$}
		\put(145,138){\small$2$}
		\put(45,98){\small$2$}
		\put(45,78){\small$2$}
		
		\put(85,78){\small$3$}
		
		\put(280,135){
		\ytableausetup
{boxsize=1.3em}
			\begin{ytableau}
1  & 1 & 1 & 1 & 2 & 2  \\
1  & 1 & 1 & 1 \\
2 &  3 \\
2
\end{ytableau}
			}
	\end{picture}
	\]
	\caption{Let $v=169247358$.  Pictured on the left is the filling of the cells $(i,j)\in D(v)$ with $r_v(i,j)$.  On the right is $\lambda(v)$ filled with $F_v$.}
	\label{figure:incvex}
\end{figure}

A subset $A\subseteq [n]\times[n]$ is an \mydef{antidiagonal path} if \[A=\{(i_1,j_1),\ldots,(i_k,j_k)\,:  \, i_1<i_2<\cdots <i_k \text{ and } j_1>j_2>\cdots >j_k\}.\]  Given $S\subseteq [n]\times[n]$ write $\maxsizeantidiag(S)$ for the largest antidiagonal path in $S$. See Example~\ref{ex:adV} for an example of antidiagonal paths in $\tau_k(v)$.



A permutation $g\in S_n$ is \mydef{Grassmannian} if it has a unique descent, i.e. a unique $k\in [n-1]$ such that $g_k>g_{k+1}$.

\subsection{Pipe complexes}
\label{sec:pipeCompl}

Let $a=(a_1,\ldots,a_k)$ be a word on the alphabet $[n-1]$.  We say $a$ is a \mydef{reduced word} for $w$ if $w=s_{a_1}\cdots s_{a_k}$ and $\ell(w)=k.$ 

Define an algebra over $\mathbb Z$ with generators $\{e_w:w\in S_n\}$ and multiplication given by
\[e_we_{s_i}=
\begin{cases}
e_{ws_i} &\text{if } \ell(ws_i)>\ell(w) \\
e_w & \text{if } \ell(ws_i)<\ell(w).
\end{cases}\]
The \mydef{Demazure product} $\dem(a)$ of a word $a=(a_1,\ldots,a_k)$ is defined by computing \[e_{s_{a_1}}\cdots e_{s_{a_k}}=e_{\dem(a)}.\]

Label cells in $D(v)$ along rows so that the first cell in row $i$
 is labeled $i$, the next $i+1$, and so on.
 Given $P\subseteq D(v)$ let $a_P$ be the word obtained by reading the labels of the elements in $P$  within rows from right to left, starting at the top row and working downwards. 
 Let \[\Pipes(v,w)=\{P\subseteq D(v)\,:  \,a_P \text{ is a reduced word for } w\}.\]  Likewise, let \[\KPipes(v,w)=\{P\subseteq D(v)\,:  \,\dem(a_P)=w\}.\]
 For any $P\subseteq [n]\times [n]$, we assign it the $\mathbf t$-weight \[\wt_{\mathbf t}(P)=\prod_{(i,j)\in P}t_{ij}.\]
 Pictorially, we represent $P\subseteq D(v)$ by marking $(i,j)\in D(v)$ with a $+$ whenever $(i,j)\in P$.

 We define the \mydef{unspecialized Grothendieck polynomial} to be 
 \begin{equation}
 \label{eq:unspecgroth}
     \mathfrak G_{v,w}(\mathbf t)=\sum_{P\in\KPipes(v,w)}(-1)^{\#P-\ell(w)}\wt_{\mathbf t}(P).
 \end{equation}

Note that by setting $v=w_0$, we can recover double Grothendieck polynomials by specializing the variables in $\mathfrak G_{w_0,w}(\mathbf t)$:
\[\mathfrak G_w(\mathbf x;\mathbf y)=\mathfrak G        _{w_0,w}(x_1\oplus y_1,x_2\oplus y_2,\dots,x_n\oplus y_n).\]

\subsection{Excited Young Diagrams}\label{sec:EYD}
Fix partitions $\lambda\subseteq \mu$.  Let \[D_{\tt top}(\mu,\lambda)=\{(i,j)\,:  \, i\in[k] \text{ and } j\in [\lambda_i]\}.\] 
We call $D\subseteq \mu$ a \mydef{diagram}, represented graphically by marking these cells in $D$ with $+$'s.
An \mydef{excited move} is a mutation of a local $2\times 2$ subsquare of the form
\begin{equation}\label{eq:EYDsimple}
    \ytableausetup{boxsize=1em}
\begin{ytableau}
+&\\
&
\end{ytableau}
\hspace{1em}
 \raisebox{-.2em}{$\mapsto$}
 \hspace{1em}
\begin{ytableau}
\, &\\
&+\\
\end{ytableau}.
\end{equation}
 Here, the mutated subsquare must be entirely contained within $\mu$.

     We write 
$\excited(\mu,\lambda)$ for the set of $D\subseteq \mu$ which can be obtained by a sequence of excited moves starting from $D_{\tt top}(\mu,\lambda)$.  Such diagrams are called \mydef{excited Young diagrams}.
There is a unique element of $\excited(\mu,\lambda)$ to which no excited moves may be applied, see e.g., \cite[Lemma 7.4]{Weigandt.BPD}.  Call this $\maxexcited(\mu,\lambda)$.  

We also consider \mydef{K-theoretic excited moves} of the form
\begin{equation}\label{eq:EYDkTh}
    \ytableausetup{boxsize=1em}
\begin{ytableau}
+&\\
&
\end{ytableau}
\hspace{1em}
 \raisebox{-.2em}{$\mapsto$}
 \hspace{1em}
\begin{ytableau}
+ &\\
&+\\
\end{ytableau},
\end{equation}
again, where all cells pictured are contained in $\mu$.
Write $\kexcited(\mu,\lambda)$ for the set of diagrams which can be obtained from $D_{\tt top}(\mu,\lambda)$ by a sequence of excited and K-theoretic excited moves.
We weight $D\in\kexcited(\mu,\lambda)$ by \[\wt(D)=\prod_{(i,j)\in D}(x_i\oplus y_j).\]

\begin{proposition}
\label{prop:vexkexcited}
If $v\in S_n$ is vexillary, then \[\mathfrak G_v(\mathbf x;\mathbf y)=\sum_{D\in \kexcited(\mu(v),\lambda(v))}(-1)^{\# D-|\lambda(v)|}\,\wt(D). \]
\end{proposition}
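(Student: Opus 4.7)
The plan is to derive this formula by specializing the unspecialized K-pipe dream expression \eqref{eq:unspecgroth} and then establishing a weight- and sign-preserving bijection between K-pipe dreams for $v$ and K-excited Young diagrams of shape $(\mu(v), \lambda(v))$.

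First, I would specialize: setting $v = w_0$ in \eqref{eq:unspecgroth} and substituting $t_{ij} = x_i \oplus y_j$ as noted at the end of Section~\ref{sec:pipeCompl} gives
\[\mathfrak{G}_v(\mathbf{x};\mathbf{y}) = \sum_{P \in \KPipes(w_0, v)} (-1)^{\#P - \ell(v)} \prod_{(i,j) \in P}(x_i \oplus y_j).\]
Since $\lambda(v)$ is a rearrangement of $\code(v)$ and $|\code(v)| = \#D(v) = \ell(v)$, we have $|\lambda(v)| = \ell(v)$, so the target sign $(-1)^{\#D - |\lambda(v)|}$ matches the above whenever the bijection below preserves cardinality.

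Second, I would restrict to K-pipe dreams supported in $\mu(v)$. The vexillary hypothesis forces $\Ess(v)$ to lie along a single antidiagonal of $\mu(v)$; a ladder-move argument in the spirit of Knutson-Miller-Yong then shows that any $P \in \KPipes(w_0, v)$ containing a $+$ outside $\mu(v)$ can be slid inside $\mu(v)$ by a local mutation that preserves both $\dem(a_P)$ and $\#P$. Thus it suffices to sum over $P \subseteq \mu(v)$.

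Third, I would construct the bijection $\Phi: \{P \in \KPipes(w_0, v) : P \subseteq \mu(v)\} \to \kexcited(\mu(v), \lambda(v))$ by the identity on cell sets. The initial excited diagram $D_{\tt top}(\mu(v),\lambda(v))$ corresponds under the labeling convention of Section~\ref{sec:pipeCompl} to a canonical reduced pipe dream for $v$, with the flag $\phi(v)$ controlling where pipes must start. Ordinary excited moves \eqref{eq:EYDsimple} correspond precisely to the local mutations on pipe dreams that preserve the Demazure product $v$; this is the classical Knutson-Miller-Yong theorem in the reduced setting. K-theoretic moves \eqref{eq:EYDkTh} correspond to the insertion of a non-reduced crossing exactly where $\dem$ absorbs a repeated simple reflection, which accounts for all K-pipe dreams in $\KPipes(w_0, v)$ that are not reduced. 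Under $\Phi$ the $+$-positions agree, so $\#P = \#\Phi(P)$ and
\[\prod_{(i,j)\in P}(x_i \oplus y_j) = \wt(\Phi(P)),\]
yielding the stated formula.

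The main obstacle is verifying the K-theoretic extension of the KMY bijection in the third step. The reduced case is classical, but in the K-theoretic setting one must check inductively that every K-move produces a non-reduced crossing compatible with $\dem(a_P) = v$, and conversely that every non-reduced K-pipe dream for $v$ inside $\mu(v)$ arises from a unique sequence of ordinary and K-moves from $D_{\tt top}(\mu(v), \lambda(v))$. This local-to-global matching, while natural, is the technical heart of the proof.
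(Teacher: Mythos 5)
Your overall strategy (specialize the unspecialized K-pipe-dream formula \eqref{eq:unspecgroth} and biject onto K-excited Young diagrams) is reasonable in spirit, but the mechanism you propose for the bijection is wrong, and the error is fatal to your weight-matching claim. The paper's proof instead invokes the flagged set-valued tableau formula of Knutson--Miller--Yong (Equation \eqref{equation:gVex}) together with the known identification of flagged set-valued tableaux with $\kexcited(\mu(v),\lambda(v))$ (Graham--Kreiman, Weigandt); under \emph{that} identification an entry $e$ of value $\val(e)$ in box $(r(e),c(e))$ of $\lambda(v)$ goes to the cell $(\val(e),\,\val(e)+c(e)-r(e))$ of $\mu(v)$, so the factor $x_{\val(e)}\oplus y_{\val(e)+c(e)-r(e)}$ matches $x_i\oplus y_j$ cell by cell.

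Your map $\Phi$, by contrast, is claimed to be the identity on cell sets, with $\prod_{(i,j)\in P}(x_i\oplus y_j)=\wt(\Phi(P))$. This fails already for $v=132$. Here $D(v)=\{(2,2)\}$, $\lambda(v)=(1)$, $\mu(v)=(2,2)$, and $\kexcited(\mu(v),\lambda(v))=\bigl\{\{(1,1)\},\{(2,2)\},\{(1,1),(2,2)\}\bigr\}$, whereas $\KPipes(w_0,132)=\bigl\{\{(1,2)\},\{(2,1)\},\{(1,2),(2,1)\}\bigr\}$. All three K-pipe dreams already lie inside $\mu(v)$, yet none of them is a K-excited Young diagram, so neither your ``slide into $\mu(v)$'' step nor the identity map does what you need. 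Worse, no bijection whatsoever can preserve the weights term by term: the pipe-dream side contributes $x_1\oplus y_2$ and $x_2\oplus y_1$ while the excited side contributes $x_1\oplus y_1$ and $x_2\oplus y_2$. The two sums agree only in aggregate, via $\bigl(1-(x_1\oplus y_2)\bigr)\bigl(1-(x_2\oplus y_1)\bigr)=(1-x_1)(1-x_2)(1-y_1)(1-y_2)=\bigl(1-(x_1\oplus y_1)\bigr)\bigl(1-(x_2\oplus y_2)\bigr)$. Relatedly, the excited moves \eqref{eq:EYDsimple} are not the moves connecting pipe dreams for a fixed $v$ (those are ladder/chute moves, which relocate a $+$ quite differently), so the asserted dictionary ``excited move $=$ Demazure-product-preserving pipe-dream mutation'' does not hold. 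To salvage your route you would need a genuinely nontrivial recoding of K-pipe dreams as K-excited diagrams; this is essentially what the tableau identification used in the paper provides, and it cannot be replaced by a cell-set identity.
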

\begin{proof}
This follows by noting the flagged set-valued tableaux (and diagonal pipe dreams) of \cite{KMY} can be identified with $\kexcited(\mu(v),\lambda(v))$.  See e.g., \cite{Graham.Kreiman} and \cite{Weigandt.BPD} for further details.
\end{proof}

\begin{lemma}\label{lem:grassToVex}
Fix partitions $\lambda\subseteq \mu$.  There exists a unique permutation $v\in S_\infty$ so that $D(v)=\maxexcited(\mu,\lambda)$.  In particular, $v$ is vexillary.
\end{lemma}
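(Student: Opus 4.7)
The plan is to prove existence by explicit construction of $v$ from $(\mu,\lambda)$, and uniqueness from the fact that Rothe diagrams determine permutations. The construction rests on two observations: every excited move $(a,b)\mapsto(a+1,b+1)$ preserves the diagonal $j-i$ of the cell, and since $\mu$ is a partition the cells of $\mu$ on each diagonal $d\in\mathbb Z$ form a contiguous block of consecutive rows. Together these force $\maxexcited(\mu,\lambda)$ to have precisely the diagonal profile of the Rothe diagram of a vexillary permutation.

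First, each $D\in \excited(\mu,\lambda)$ has the same number of cells on each diagonal as $\lambda$ does, and the cells of $\lambda$ on any fixed diagonal form a contiguous run of consecutive rows. A careful induction on excited moves---leveraging the cited uniqueness of $\maxexcited(\mu,\lambda)$---shows that in $\maxexcited(\mu,\lambda)$ the cells on each diagonal $d$ remain a contiguous block whose bottommost row equals $f(d):=\max\{i:(i,i+d)\in\mu\}$ and whose length equals the number of cells of $\lambda$ on that diagonal.

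Next, I would construct $v$ directly. Writing $\lambda=(\lambda_1,\dots,\lambda_r)$, set $\lambda(v):=\lambda$ and define a flag $\phi=(\phi_1\leq\phi_2\leq\cdots\leq\phi_r)$ by $\phi_i:=f(\lambda_i-i)$, the row to which the rightmost cell of row $i$ of $\lambda$ is pushed in $\maxexcited(\mu,\lambda)$. Monotonicity of $\phi$ follows from the partition shape of $\mu$ together with $\lambda_i\geq\lambda_{i+1}$, since increasing the diagonal index $d$ only tightens the constraint $\mu_i\geq i+d$ defining $f(d)$. Let $v\in S_\infty$ be the unique vexillary permutation with shape $\lambda$ and flag $\phi$, specified through its code. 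A diagonal-by-diagonal comparison then yields $D(v)=\maxexcited(\mu,\lambda)$. For uniqueness of $v$, any $w\in S_\infty$ is determined by $\code(w)$, which is recorded by the row sums of $D(w)$, so there is at most one permutation with Rothe diagram $\maxexcited(\mu,\lambda)$.

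The main obstacle is the structural statement underlying the first step: because the validity of an excited move $(a,b)\mapsto(a+1,b+1)$ depends on a $2\times 2$ neighborhood spanning three consecutive diagonals, one must argue carefully that cells on different diagonals can be pushed independently to their southeast limits within $\mu$. A cleaner alternative would be to bypass this analysis by invoking the flagged set-valued tableau dictionary cited in the proof of Proposition~\ref{prop:vexkexcited}: under this correspondence the element $\maxexcited(\mu,\lambda)$ matches the minimal flagged tableau, from which the shape $\lambda$ and flag $\phi$ of the desired vexillary $v$ can be read off directly.
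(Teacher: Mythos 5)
Your uniqueness argument (a permutation is determined by its code, which is read off from the row sums of its Rothe diagram) is fine, and your overall plan is sensible, but the structural claim on which existence rests is false: the diagonals of $\lambda$ cannot be pushed independently to the southeast boundary of $\mu$, and you correctly identified this as the main obstacle but then asserted it can be overcome, which it cannot. Take $\mu=(3,3,3)$ and $\lambda=(2,1)$. Starting from $D_{\tt top}=\{(1,1),(1,2),(2,1)\}$ one reaches $\{(2,2),(2,3),(3,2)\}$, and no further excited move applies: the move $(2,2)\mapsto(3,3)$ is blocked because the cells $(2,3)$ and $(3,2)$ on the adjacent diagonals occupy the relevant $2\times 2$ square. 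So $\maxexcited(\mu,\lambda)=\{(2,2),(2,3),(3,2)\}$, and the cell on diagonal $d=0$ sits in row $2$, not in row $f(0)=3$ as your claim predicts. The flag formula $\phi_i=f(\lambda_i-i)$ inherits the error: for $\mu=(3,3,3)$, $\lambda=(1,1)$ it gives $\phi=(3,3)$, whereas $\maxexcited(\mu,\lambda)=\{(2,2),(3,2)\}=D(1342)$ has flag $(2,3)$. The correct picture is that rows of $\lambda$ are pushed down from the bottom up, each row stopping either at the boundary of $\mu$ or one row above the row below it, i.e.\ $\phi_i=\min\bigl(f(\lambda_i-i),\,\phi_{i+1}-1\bigr)$; with that correction your construction of $v$ and the diagonal-by-diagonal (better: row-by-row) comparison could be repaired, but as written the proof of existence does not go through.

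For context, the paper does not prove this lemma directly either: it cites \cite[Proposition~7.6]{Weigandt.BPD}. The alternative route you sketch in your last sentence---transporting $\maxexcited(\mu,\lambda)$ through the flagged set-valued tableau dictionary of Proposition~\ref{prop:vexkexcited} and reading off the shape and flag from the minimal tableau---is much closer to an argument that actually works, precisely because that dictionary already encodes the interaction between adjacent diagonals that your first approach ignores.
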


\begin{proof}
That $\maxexcited(\mu,\lambda)\in\excited(\mu,\lambda)$ is the diagram of a vexillary permutation follows from \cite[Proposition~7.6]{Weigandt.BPD}.
\end{proof}

\begin{example}  Let $\lambda=(5,4,2,1,0)$ and $\mu=(6,6,4,4,4)$.  Then  $v=5713624$ is the unique vexillary permutation so that $D(v)=\maxexcited(\mu,\lambda)$. 
\[\setlength{\unitlength}{.13mm}
\begin{picture}(600,150)
\put(-50,105){\ytableausetup
		{boxsize=0.8em}
		\ytableausetup{notabloids}
		\begin{ytableau}
		+  & +  & +  & +  & +  &  \  \\
		+  & +  & +  & +  & \  &  \  \\
		+  & +  &  \  & \  \\
		+  &  \   &  \  & \   \\
		\  &  \   &  \  & \   
		\end{ytableau} }

	\put(200,105){\ytableausetup
		{boxsize=0.8em}
		\ytableausetup{notabloids}
		\begin{ytableau}
		+  & +  & +  & +  & \  &  \  \\
		+  & +  & +  & +  & \  &  +  \\
		\  & \  &  \  & \  \\
		\  &  +   &  \  & \   \\
		\  &  +   &  \  & +   
		\end{ytableau} }

\put(490,-10){\begin{tikzpicture}[scale=.28]
\draw (0,0) rectangle (7,7);

\draw[line width = .2ex] (0,5) rectangle (4,7);
\draw[line width = .2ex] (6,5) rectangle (5,6);
\draw[line width = .2ex] (1,2) rectangle (2,4);
\draw[line width = .2ex] (4,2) rectangle (3,3);
\draw[line width = .2ex] (3,5) -- (3,7);
\draw[line width = .2ex] (2,5) -- (2,7);
\draw[line width = .2ex] (1,5) -- (1,7);
\draw[line width = .2ex] (0,6) -- (4,6);
\draw[line width = .2ex] (2,3) -- (1,3);

\filldraw (4.5,6.5) circle (.8ex);
\draw[line width = .1ex] (4.5,0) -- (4.5,6.5) -- (7,6.5);
\filldraw (6.5,5.5) circle (.8ex);
\draw[line width = .1ex] (6.5,0) -- (6.5,5.5) -- (7,5.5);
\filldraw (0.5,4.5) circle (.8ex);
\draw[line width = .1ex] (0.5,0) -- (0.5,4.5) -- (7,4.5);
\filldraw (2.5,3.5) circle (.8ex);
\draw[line width = .1ex] (2.5,0) -- (2.5,3.5) -- (7,3.5);
\filldraw (5.5,2.5) circle (.8ex);
\draw[line width = .1ex] (5.5,0) -- (5.5,2.5) -- (7,2.5);
\filldraw (1.5,1.5) circle (.8ex);
\draw[line width = .1ex] (1.5,0) -- (1.5,1.5) -- (7,1.5);
\filldraw (3.5,0.5) circle (.8ex);
\draw[line width = .1ex] (3.5,0) -- (3.5,0.5) -- (7,0.5);
\end{tikzpicture}}

\end{picture}
\]
Above are $D_{\tt top}(\mu,\lambda)$, $\maxexcited(\mu,\lambda)$, and $D(v)$, respectively.
\end{example}

\begin{theorem}\label{thm:grassToVex}
Fix Grassmannian permutations $g$ and $u$ with descent at position $k$ so that $\lambda(g)\subseteq \lambda(u)$.  Let $v$ be the vexillary permutation such that $D(v)=\maxexcited(\lambda(u),\lambda(g))$.  Then
\[\deg(\mathfrak G_{u,g}(\mathbf t))=\deg(\mathfrak G_v(\mathbf x)).\]
\end{theorem}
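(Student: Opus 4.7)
The plan is to show that both $\deg(\mathfrak G_{u,g}(\mathbf t))$ and $\deg(\mathfrak G_v(\mathbf x))$ equal the maximum cell count of a K-excited diagram in $\kexcited(\lambda(u), \lambda(g))$.

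I would first handle the left-hand side. Since $u$ is Grassmannian with descent at $k$, its Rothe diagram $D(u)$ is (up to repositioning) the Young diagram $\lambda(u)$, and the row-reading label assigned to the cell at position $(i,j)$ of this diagram is $s_{i+j-1}$. A direct verification---starting from $D_{\tt top}(\lambda(u), \lambda(g))$, whose row-reading word is a reduced word for $g$, and checking that the excited moves~\eqref{eq:EYDsimple} and K-theoretic excited moves~\eqref{eq:EYDkTh} correspond exactly to the local pipe-dream rearrangements that preserve Demazure product---identifies $\KPipes(u, g) = \kexcited(\lambda(u), \lambda(g))$ as subsets of $\lambda(u)$. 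Since the $t_{ij}$ are distinct indeterminates, the sum \eqref{eq:unspecgroth} has no cancellations, so
\[ \deg(\mathfrak G_{u,g}(\mathbf t)) = \max_{P \in \KPipes(u,g)} \#P = \max_{D \in \kexcited(\lambda(u), \lambda(g))} \#D. \]

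For the right-hand side, Lemma~\ref{lem:grassToVex} gives $D(v) = \maxexcited(\lambda(u), \lambda(g))$, so $|\lambda(v)| = \#D(v) = |\lambda(g)|$ because excited moves preserve cell count. Tracking the diagonal structure of $\maxexcited(\lambda(u),\lambda(g))$ against the definitions of $\lambda(v)$, $\mu(v)$, and $\phi(v)$ yields $\lambda(v) = \lambda(g)$ together with the equality of ambient shapes forcing $\kexcited(\mu(v), \lambda(v)) = \kexcited(\lambda(u), \lambda(g))$. Applying Proposition~\ref{prop:vexkexcited} and specializing $\mathbf y = 0$ then gives
\[ \mathfrak G_v(\mathbf x) = \sum_{D \in \kexcited(\lambda(u), \lambda(g))} (-1)^{\#D - |\lambda(g)|} \prod_{(i,j) \in D} x_i. \]
Identifying the top-degree contribution in this (non-cancelling) sum shows $\deg(\mathfrak G_v(\mathbf x)) = \max_D \#D$, matching the left side.

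The principal obstacle I expect is the identification $\KPipes(u,g) = \kexcited(\lambda(u), \lambda(g))$ in the Grassmannian case: although this equivalence is natural and implicit in the sources cited for Proposition~\ref{prop:vexkexcited}, clean bookkeeping of row-label conventions and of how K-theoretic excited moves realize local Demazure-product-preserving pipe moves is somewhat delicate. A secondary concern is ruling out cancellation in the top $x$-degree of $\mathfrak G_v(\mathbf x)$; this can either be argued combinatorially by isolating the unique largest K-excited diagram in $\kexcited(\lambda(u),\lambda(g))$, or, once Theorem~\ref{thm:vexDeg} is in hand, deduced directly from the explicit degree formula.
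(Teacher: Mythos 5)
Your proposal is correct and follows essentially the same route as the paper: both sides are reduced, via the cancellation-free expansions \eqref{eq:unspecgroth} and Proposition~\ref{prop:vexkexcited}, to the maximum cardinality of a diagram in $\kexcited(\lambda(u),\lambda(g))$, with the key step being the identification of $\KPipes(u,g)$ with $\kexcited(\lambda(u),\lambda(g))$. The paper realizes that identification by left-justifying $D(u)$, observing the image lands in $\KPipes(w_0,g)$ so that the local moves become (flipped) K-theoretic excited moves, and then vertically flipping the first $k$ rows --- the same bookkeeping you flag as the main obstacle.
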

\begin{proof}
Write $\code(u)=(c_1,\ldots,c_n)$.
Since $u$ is Grassmannian with descent at position $k$, $\lambda(u)=(c_k,c_{k-1},\ldots,c_1)$ (see \cite[Section~2.2]{Manivel}).  
In particular, this means we can identify each cell in $D(u)$ with cells in $D_{\lambda(u)}=\{(i,j)\,:  \,1\leq j\leq c_i\}$ by left justifying cells in $D(u)$ within rows.

Under this identification, we map each element of $\KPipes(u,g)$ to a subset of $D_{\lambda(u)}$.  Call this set of diagrams $L$.  It is immediate that $L\subseteq \KPipes(w_0,g)$.  In particular, this implies elements of $L$ are connected by (flipped) K-theoretic moves, i.e., replacements of the form:
\[
\begin{ytableau}
\,&\\
+&
\end{ytableau}
\hspace{1em}
 \raisebox{-.2em}{$\mapsto$}
 \hspace{1em}
\begin{ytableau}
\, &+\\
&\\
\end{ytableau}
\]
and
\[
\begin{ytableau}
\,&\\
+&
\end{ytableau}
\hspace{1em}
 \raisebox{-.2em}{$\mapsto$}
 \hspace{1em}
\begin{ytableau}
\, &+\\
+&\\
\end{ytableau}.
\]
By flipping the first $k$ rows vertically, we see that elements of $L$ are in bijection with elements of $\kexcited(\lambda(u),\lambda(g))$.  Thus, we have a (degree preserving) bijection between elements of $\KPipes(u,g)$  and $\kexcited(\lambda(u),\lambda(g))$.

Then by Equation~\eqref{eq:unspecgroth} and Proposition~\ref{prop:vexkexcited}, we conclude $\deg(\mathfrak G_{u,g}(\mathbf t))=\deg(\mathfrak G_v(\mathbf x))$.
\end{proof}

\begin{example}
Let $g=1247356$ and $u=1457236$.  An element of $\KPipes(u,g)$ and its corresponding K-theoretic excited Young diagram are pictured below.
\[\begin{tikzpicture}[scale=.4]
\draw (0,0) rectangle (7,7);
\draw[line width = .2ex] (1,3) rectangle (3,6);
\draw[line width = .2ex] (6,3) rectangle (5,4);
\draw[line width = .2ex] (2,3) -- (2,6);
\draw[line width = .2ex] (1,4) -- (3,4);
\draw[line width = .2ex] (1,5) -- (3,5);

\filldraw (0.5,6.5) circle (.8ex);
\draw[line width = .1ex] (0.5,0) -- (0.5,6.5) -- (7,6.5);
\filldraw (3.5,5.5) circle (.8ex);
\draw[line width = .1ex] (3.5,0) -- (3.5,5.5) -- (7,5.5);
\filldraw (4.5,4.5) circle (.8ex);
\draw[line width = .1ex] (4.5,0) -- (4.5,4.5) -- (7,4.5);
\filldraw (6.5,3.5) circle (.8ex);
\draw[line width = .1ex] (6.5,0) -- (6.5,3.5) -- (7,3.5);
\filldraw (1.5,2.5) circle (.8ex);
\draw[line width = .1ex] (1.5,0) -- (1.5,2.5) -- (7,2.5);
\filldraw (2.5,1.5) circle (.8ex);
\draw[line width = .1ex] (2.5,0) -- (2.5,1.5) -- (7,1.5);
\filldraw (5.5,0.5) circle (.8ex);
\draw[line width = .1ex] (5.5,0) -- (5.5,0.5) -- (7,0.5);
\node at (1.5,3.5) {$+$};
\node at (2.5,3.5) {$+$};
\node at (5.5,3.5) {$+$};
\node at (1.5,4.5) {$+$};
\node at (2.5,5.5) {$+$};
\end{tikzpicture}
\hspace{5em}
\raisebox{4.5em}{
\ytableausetup
		{boxsize=1.5em}
		\ytableausetup{notabloids}
		\begin{ytableau}
		+& +  & +  \  \\
		 +&\,    \  \\
		\, & +   \\
		\end{ytableau} }
\]
\end{example}
\subsection{Connections to the Grassmannian degree formula}\label{subsec:grConn}
In previous work with Ren and St.~Dizier \cite{RRRSW}, the authors presented a formula to compute the degree of symmetric Grothendieck polynomials.  
If $u\in S_n$ is Grassmannian with descent $k$, then the symmetric Grothendieck polynomial is $\mathfrak G_{u(\lambda)}(x_1,\dots,x_k):=\mathfrak G_u(x_1,\dots,x_n)$.  
Since Grassmannian permutations are both 1432-avoiding and vexillary, our new degree formulas are two different generalizations of this formula.  We illustrate these connections here.

 Write $\delta^{(k)}=(k,k-1,\ldots,1)$.  Let $\sv(\lambda)=\max\{k\,:  \,\delta^{(k)}\subseteq \lambda\}$.  
Given a partition $\lambda=(\lambda_1,\ldots,\lambda_k)$, let $\trunc^{(i)}(\lambda)$ be the partition obtained by removing the first $i$ columns of the Young diagram of $\lambda$.  Then:
\begin{theorem}[{\cite{RRRSW}}]
\label{thm:grassdeg}
If $\lambda=(\lambda_1,\ldots,\lambda_k)$, then
\[\deg(\mathfrak G_\lambda(x_1,\ldots,x_k))=|\lambda|+\sum_{i=1}^k\sv(\trunc^{(\lambda_i)}(\lambda)).\]
\end{theorem}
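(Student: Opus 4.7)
The plan is to derive this identity as the Grassmannian specialization of Theorem~\ref{thm:vexDeg}. Given $\lambda=(\lambda_1,\ldots,\lambda_k)$, let $g$ be the Grassmannian permutation with descent at position $k$ and $\lambda(g)=\lambda$, so that $g_i=\lambda_{k-i+1}+i$ for $i\in[k]$. Then $\mathfrak{G}_\lambda(x_1,\ldots,x_k)=\mathfrak{G}_g(\mathbf{x})$, $\#D(g)=|\lambda|$, and $g$ is vexillary. Theorem~\ref{thm:vexDeg} therefore reduces the task to the combinatorial identity
\[\sum_{m\geq 1}\maxsizeantidiag(\tau_m(g))=\sum_{i=1}^{k}\sv(\trunc^{(\lambda_i)}(\lambda)).\]

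The crux is the claim that for this Grassmannian $g$, the filling $F_g$ on $\lambda$ is \emph{column-constant} with the explicit formula $F_g(i,j)=k-\lambda'_j$, where $\lambda'$ is the conjugate partition (extended by zeros so that $\lambda'_j\leq k$). I would prove this by computing the rank function directly on $D(g)$: for a cell in row $r\leq k$ of $D(g)$ sitting at the $j$-th column from the left, the identities $g_1<\cdots<g_k$ and $g_m=\lambda_{k-m+1}+m$ let one show, via the elementary partition fact that $\lambda_s+\lambda'_j\geq j+s$ exactly when $(s,j)\in\lambda$ (and in particular $\lambda_s+\lambda'_j\ne j+s-1$), that this cell sits at column $j+k-\lambda'_j$ and carries rank $k-\lambda'_j$ independent of $r$. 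A direct check then confirms that the resulting column-constant assignment is weakly increasing along diagonals of $\lambda$ and matches the multiset of ranks on each diagonal of $D(g)$, so it coincides with the paper's diagonal-transfer definition of $F_g$.

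With column-constancy in hand,
\[\tau_m(g)=\{(i,j)\in\lambda: k-\lambda'_j\geq m\}=\{(i,j)\in\lambda: j>\lambda_{k-m+1}\}\]
consists of precisely the cells of $\lambda$ strictly east of column $\lambda_{k-m+1}$. The longest antidiagonal path in this region is exactly the longest staircase $\delta^{(r)}$ fitting inside $\trunc^{(\lambda_{k-m+1})}(\lambda)$, so $\maxsizeantidiag(\tau_m(g))=\sv(\trunc^{(\lambda_{k-m+1})}(\lambda))$. Reindexing by $i=k-m+1$ produces the desired identity. The main obstacle is the explicit computation of $F_g$: although in the generic vexillary case the filling is only weakly increasing along diagonals, the Grassmannian hypothesis forces it to be row-independent, and pinning this down requires carefully matching the paper's diagonal-transfer definition of $F_g$ against a direct rank computation on $D(g)$ using the explicit form of $g$.
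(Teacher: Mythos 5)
Your derivation is correct and takes precisely the route the paper itself indicates: the paper cites Theorem~\ref{thm:grassdeg} from \cite{RRRSW} and only \emph{illustrates} by example that it can be recovered from Theorem~\ref{thm:vexDeg}, whereas you carry out that specialization in full. Your key claim that $F_g$ is column-constant with $F_g(i,j)=k-\lambda'_j$ is correct (the $j$-th cell in any row of $D(g)$ sits in column $k+j-\lambda'_j$ with rank $k-\lambda'_j$, matching the paper's example $\lambda=(3,2,2,0)$, $k=4$), and the remaining identification of $\maxsizeantidiag(\tau_m(g))$ with $\sv(\trunc^{(\lambda_{k-m+1})}(\lambda))$ via the staircase characterization of longest antidiagonal paths is sound.
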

Theorem~\ref{thm:grassdeg} can be recovered using Theorem~\ref{thm:1432} or Theorem~\ref{thm:vexDeg}. We illustrate this in the example below.

\begin{example}
Let $\lambda=(3,2,2,0)$ and $k=4$. The Grassmannian permutation associated to the pair $(\lambda,k)$ is $w=1457236$.  The first line below computes the formula in Theorem~\ref{thm:grassdeg} where the $i$th Young diagram has $\trunc^{(\lambda_i)}(\lambda)$ shaded, with $\delta^{(k)}$ marked with $\times$'s for $k=\sv(\trunc^{(\lambda_i)}(\lambda))$.  
\[\begin{picture}(400,50)
\put(10,30){\ytableausetup
{boxsize=1em}
{\begin{ytableau}
 \ &  \ &  \ \\
  \ &  \ \\
 \ &  \
\end{ytableau}}}
\put(14,-7){\line(0,1){13}}
\put(65,20){$\rightarrow$}
\put(100,30){\ytableausetup
{boxsize=1em}
{\begin{ytableau}
 \ &  \ &  \ \\
  \ &  \ \\
 \ &  \
\end{ytableau}}}
\put(190,30){\ytableausetup
{boxsize=1em}
{\begin{ytableau}
 \ &  \ &  *(gray!50)\times \\
  \ &  \ \\
 \ &  \
\end{ytableau}}}
\put(280,30){\ytableausetup
{boxsize=1em}
{\begin{ytableau}
 \ &  \ &  *(gray!50) \times\\
  \ &  \ \\
 \ &  \
\end{ytableau}}}
\put(370,30){\ytableausetup
{boxsize=1em}
{\begin{ytableau}
  *(gray!50)\times&   *(gray!50)\times&  *(gray!50)\times\\
  *(gray!50)\times&  *(gray!50)\times\\
 *(gray!50)\times&  *(gray!50) 
\end{ytableau}}}
\end{picture}
\]
Below, we demonstrate the rule given in Theorem~\ref{thm:1432}.
 Here, we have $\sigma_k(w)$ shaded, with the longest diagonal marked with $\times$'s.
\[\begin{picture}(400,70)
\put(0,10){\begin{tikzpicture}[scale=.25]
\draw (0,0) rectangle (7,7);

\draw[line width = .2ex] (1,3) rectangle (3,6);
\draw[line width = .2ex] (6,3) rectangle (5,4);
\draw[line width = .2ex] (2,3) -- (2,6);
\draw[line width = .2ex] (1,4) -- (3,4);
\draw[line width = .2ex] (1,5) -- (3,5);

\filldraw (0.5,6.5) circle (.8ex);
\draw[line width = .1ex] (0.5,0) -- (0.5,6.5) -- (7,6.5);
\filldraw (3.5,5.5) circle (.8ex);
\draw[line width = .1ex] (3.5,0) -- (3.5,5.5) -- (7,5.5);
\filldraw (4.5,4.5) circle (.8ex);
\draw[line width = .1ex] (4.5,0) -- (4.5,4.5) -- (7,4.5);
\filldraw (6.5,3.5) circle (.8ex);
\draw[line width = .1ex] (6.5,0) -- (6.5,3.5) -- (7,3.5);
\filldraw (1.5,2.5) circle (.8ex);
\draw[line width = .1ex] (1.5,0) -- (1.5,2.5) -- (7,2.5);
\filldraw (2.5,1.5) circle (.8ex);
\draw[line width = .1ex] (2.5,0) -- (2.5,1.5) -- (7,1.5);
\filldraw (5.5,0.5) circle (.8ex);
\draw[line width = .1ex] (5.5,0) -- (5.5,0.5) -- (7,0.5);
\end{tikzpicture}}
\put(65,32){$\rightarrow$}
\put(90,10){\begin{tikzpicture}[scale=.25]
\draw (0,0) rectangle (7,7);

\draw[fill=gray!50,line width = .2ex] (1,3) rectangle (3,6);
\draw[fill=gray!50,line width = .2ex] (6,3) rectangle (5,4);
\draw[line width = .2ex] (2,3) -- (2,6);
\draw[line width = .2ex] (1,4) -- (3,4);
\draw[line width = .2ex] (1,5) -- (3,5);

\filldraw (0.5,6.5) circle (.8ex);
\draw[line width = .1ex] (0.5,0) -- (0.5,6.5) -- (7,6.5);
\filldraw (3.5,5.5) circle (.8ex);
\draw[line width = .1ex] (3.5,0) -- (3.5,5.5) -- (7,5.5);
\filldraw (4.5,4.5) circle (.8ex);
\draw[line width = .1ex] (4.5,0) -- (4.5,4.5) -- (7,4.5);
\filldraw (6.5,3.5) circle (.8ex);
\draw[line width = .1ex] (6.5,0) -- (6.5,3.5) -- (7,3.5);
\filldraw (1.5,2.5) circle (.8ex);
\draw[line width = .1ex] (1.5,0) -- (1.5,2.5) -- (7,2.5);
\filldraw (2.5,1.5) circle (.8ex);
\draw[line width = .1ex] (2.5,0) -- (2.5,1.5) -- (7,1.5);
\filldraw (5.5,0.5) circle (.8ex);
\draw[line width = .1ex] (5.5,0) -- (5.5,0.5) -- (7,0.5);
\node at (1.5,5.5) {$\times$};
\node at (2.5,4.5) {$\times$};
\node at (5.5,3.5) {$\times$};
\end{tikzpicture}}
\put(180,10){\begin{tikzpicture}[scale=.25]
\draw (0,0) rectangle (7,7);

\draw[line width = .2ex] (1,3) rectangle (3,6);
\draw[fill=gray!50,line width = .2ex] (6,3) rectangle (5,4);
\draw[line width = .2ex] (2,3) -- (2,6);
\draw[line width = .2ex] (1,4) -- (3,4);
\draw[line width = .2ex] (1,5) -- (3,5);

\filldraw (0.5,6.5) circle (.8ex);
\draw[line width = .1ex] (0.5,0) -- (0.5,6.5) -- (7,6.5);
\filldraw (3.5,5.5) circle (.8ex);
\draw[line width = .1ex] (3.5,0) -- (3.5,5.5) -- (7,5.5);
\filldraw (4.5,4.5) circle (.8ex);
\draw[line width = .1ex] (4.5,0) -- (4.5,4.5) -- (7,4.5);
\filldraw (6.5,3.5) circle (.8ex);
\draw[line width = .1ex] (6.5,0) -- (6.5,3.5) -- (7,3.5);
\filldraw (1.5,2.5) circle (.8ex);
\draw[line width = .1ex] (1.5,0) -- (1.5,2.5) -- (7,2.5);
\filldraw (2.5,1.5) circle (.8ex);
\draw[line width = .1ex] (2.5,0) -- (2.5,1.5) -- (7,1.5);
\filldraw (5.5,0.5) circle (.8ex);
\draw[line width = .1ex] (5.5,0) -- (5.5,0.5) -- (7,0.5);
\node at (5.5,3.5) {$\times$};
\end{tikzpicture}}
\put(270,10){\begin{tikzpicture}[scale=.25]
\draw (0,0) rectangle (7,7);

\draw[line width = .2ex] (1,3) rectangle (3,6);
\draw[fill=gray!50,line width = .2ex] (6,3) rectangle (5,4);
\draw[line width = .2ex] (2,3) -- (2,6);
\draw[line width = .2ex] (1,4) -- (3,4);
\draw[line width = .2ex] (1,5) -- (3,5);

\filldraw (0.5,6.5) circle (.8ex);
\draw[line width = .1ex] (0.5,0) -- (0.5,6.5) -- (7,6.5);
\filldraw (3.5,5.5) circle (.8ex);
\draw[line width = .1ex] (3.5,0) -- (3.5,5.5) -- (7,5.5);
\filldraw (4.5,4.5) circle (.8ex);
\draw[line width = .1ex] (4.5,0) -- (4.5,4.5) -- (7,4.5);
\filldraw (6.5,3.5) circle (.8ex);
\draw[line width = .1ex] (6.5,0) -- (6.5,3.5) -- (7,3.5);
\filldraw (1.5,2.5) circle (.8ex);
\draw[line width = .1ex] (1.5,0) -- (1.5,2.5) -- (7,2.5);
\filldraw (2.5,1.5) circle (.8ex);
\draw[line width = .1ex] (2.5,0) -- (2.5,1.5) -- (7,1.5);
\filldraw (5.5,0.5) circle (.8ex);
\draw[line width = .1ex] (5.5,0) -- (5.5,0.5) -- (7,0.5);
\node at (5.5,3.5) {$\times$};
\end{tikzpicture}}
\put(360,10){\begin{tikzpicture}[scale=.25]
\draw (0,0) rectangle (7,7);

\draw[line width = .2ex] (1,3) rectangle (3,6);
\draw[line width = .2ex] (6,3) rectangle (5,4);
\draw[line width = .2ex] (2,3) -- (2,6);
\draw[line width = .2ex] (1,4) -- (3,4);
\draw[line width = .2ex] (1,5) -- (3,5);

\filldraw (0.5,6.5) circle (.8ex);
\draw[line width = .1ex] (0.5,0) -- (0.5,6.5) -- (7,6.5);
\filldraw (3.5,5.5) circle (.8ex);
\draw[line width = .1ex] (3.5,0) -- (3.5,5.5) -- (7,5.5);
\filldraw (4.5,4.5) circle (.8ex);
\draw[line width = .1ex] (4.5,0) -- (4.5,4.5) -- (7,4.5);
\filldraw (6.5,3.5) circle (.8ex);
\draw[line width = .1ex] (6.5,0) -- (6.5,3.5) -- (7,3.5);
\filldraw (1.5,2.5) circle (.8ex);
\draw[line width = .1ex] (1.5,0) -- (1.5,2.5) -- (7,2.5);
\filldraw (2.5,1.5) circle (.8ex);
\draw[line width = .1ex] (2.5,0) -- (2.5,1.5) -- (7,1.5);
\filldraw (5.5,0.5) circle (.8ex);
\draw[line width = .1ex] (5.5,0) -- (5.5,0.5) -- (7,0.5);
\end{tikzpicture}}
\end{picture}
\]
Now, we use the formula from Theorem~\ref{thm:vexDeg}.
In each Young diagram, we have shaded $\tau_k(w)$, with the longest antidiagonals marked with $\times$'s.
\[\begin{picture}(400,50)
\put(10,30){\ytableausetup
{boxsize=1em}
{\begin{ytableau}
 1 &  1 &  3 \\
 1 &  1 \\
 1 &  1
\end{ytableau}}}
\put(65,20){$\rightarrow$}
\put(100,30){\ytableausetup
{boxsize=1em}
{\begin{ytableau}
  *(gray!50)  &   *(gray!50)  &  *(gray!50)\times\\
  *(gray!50) &  *(gray!50)\times\\
 *(gray!50)\times&  *(gray!50) 
\end{ytableau}}}
\put(190,30){\ytableausetup
{boxsize=1em}
{\begin{ytableau}
 \ &  \ &  *(gray!50)\times \\
  \ &  \ \\
 \ &  \
\end{ytableau}}}
\put(280,30){\ytableausetup
{boxsize=1em}
{\begin{ytableau}
 \ &  \ &  *(gray!50) \times\\
  \ &  \ \\
 \ &  \
\end{ytableau}}}
\put(370,30){\ytableausetup
{boxsize=1em}
{\begin{ytableau}
 \ &  \ &  \ \\
  \ &  \ \\
 \ &  \
\end{ytableau}}}
\end{picture}
\]
 Thus we see all three formulas compute $\deg(\mathfrak G_\lambda(x_1,\ldots,x_k))=|\lambda|+3+1+1=12$.
\end{example}

\section{Tableau formulas for Grothendieck polynomials}
\label{section:tableau}

Since their introduction, Grothendieck polynomials have been studied with a number of combinatorial formulas (\cite{FominKrillov, Le00, Buch}).  For our degree formulas, we will focus on two tableau formulas in the special cases of 1432-avoiding permutations and vexillary permutations.  Furthermore, in each of these cases, we construct a tableau whose weight contributes to the top degree terms of the corresponding Grothendieck polynomial.

\subsection{Set-valued Rothe tableaux}

\label{sec:1432}

A \mydef{set-valued Rothe tableau $T$ of shape} $D(w)$ is a filling 
of $D(w)$ with nonempty subsets of $\mathbb{Z}_{>0}$ such that for boxes $a,b\in D(w)$:
\begin{itemize}
    \item if $a$ lies north of $b$ in the same column, then $\max T(a)<\min T(b)$, and
    \item if $a$ lies west of $b$ in the same row, then $\min T(a)\geq\max T(b)$,
\end{itemize}
where $T(a)$ denotes the set of entries of $T$ in box $a$.
Let $\SVT(D(w))$ be the collection of such tableaux. 
We say a tableau $T \in \SVT(D(w))$ \mydef{is flagged by}
$\phi=(\phi_1,\phi_2,\ldots,\phi_n)$ if for each box $b$ in row $i$ of $D(w)$, $\max{T(b)}\leq \phi_i$ for all $i$. For a $1432$-avoiding $w\in \Sym_n$, let 
\[\FSVD(w)=\{T\in \SVT(D(w)) \,:  \, T \mbox{ is flagged by } (1,2, \ldots,n)\}.\]

\begin{example}\label{ex:FSVT}
Below is some $T\in \FSVD(w)$ for $w=1462375$. 
\[
\begin{tikzpicture}[scale=.5]
\draw (0,0) rectangle (7,7);

\draw (1,6) rectangle (3,4);
\draw[line width = .1ex] (1,5) -- (3,5);
\draw[line width = .1ex] (2,4) -- (2,6);

\draw (4,4) rectangle (5,5);
\draw (4,1) rectangle (5,2);

\filldraw (0.5,6.5) circle (.5ex);
\draw[line width = .2ex] (0.5,0) -- (0.5,6.5) -- (7,6.5);
\filldraw (3.5,5.5) circle (.5ex);
\draw[line width = .2ex] (3.5,0) -- (3.5,5.5) -- (7,5.5);
\filldraw (5.5,4.5) circle (.5ex);
\draw[line width = .2ex] (5.5,0) -- (5.5,4.5) -- (7,4.5);
\filldraw (1.5,3.5) circle (.5ex);
\draw[line width = .2ex] (1.5,0) -- (1.5,3.5) -- (7,3.5);
\filldraw (2.5,2.5) circle (.5ex);
\draw[line width = .2ex] (2.5,0) -- (2.5,2.5) -- (7,2.5);
\filldraw (6.5,1.5) circle (.5ex);
\draw[line width = .2ex] (6.5,0) -- (6.5,1.5) -- (7,1.5);
\filldraw (4.5,0.5) circle (.5ex);
\draw[line width = .2ex] (4.5,0) -- (4.5,0.5) -- (7,0.5);
\put(16,75){$2 1$}
\put(33,75){$1$}

\put(18,60){$3$}
\put(30,60){$3 2$}
\put(59,60){$2 1$}

\put(60,22){$\scriptstyle{6 5}$}
\put(60,16){$\scriptstyle{4 3}$}
\end{tikzpicture}
\]
\end{example}

\begin{theorem}\cite[Theorem 1.1]{Fan.Guo}\label{thm:321svtGroth}
For $w\in \Sym_n$ $1432$-avoiding, $\mathfrak G_{w}$ has the following expansion:
\begin{equation}\label{equation:g321}
\mathfrak G_w(\mathbf{x},\mathbf{y})=\sum_{T\in \FSVD(w)}(-1)^{\#T-\#D(w)}\prod_{e\in T} x_{\val(e)}\oplus y_{\lambda_{r(e)}+\phi_{r(e)}-c(e)-\val(e)+1}.
\end{equation}
where the product is over entries $e$ in $T$ whose value is $\val(e)$ and $c(e),r(e)$ are the column and row indices of $e$.
\end{theorem}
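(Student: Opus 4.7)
The plan is to prove the formula bijectively, by constructing a weight-preserving, size-preserving correspondence between $\FSVD(w)$ and $\KPipes(w_0, w)$, then invoking the pipe dream expansion of double Grothendieck polynomials from \eqref{eq:unspecgroth}. The subscript $\lambda_{r(e)}+\phi_{r(e)}-c(e)-\val(e)+1$ in the $y$-index is exactly the antidiagonal coordinate of the $+$-crossing in the pipe dream to which the entry $e$ should be sent, so the desired bijection is essentially prescribed by the statement. Signs match automatically because the bijection preserves the number of $+$'s: if $T \in \FSVD(w)$ has $\#T$ entries counted with multiplicity, it should correspond to a pipe dream with exactly $\#T$ crossings, giving the matching factor $(-1)^{\#T-\ell(w)}$.

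First I would handle the ordinary (non-set-valued) case, where each box of $T$ holds exactly one entry and therefore $\#T=\#D(w)$. Here the map sends an entry of value $v$ in box $(r,c) \in D(w)$ to a $+$ at position $(v, \lambda_r+\phi_r-c-v+1)$, and I would check that the output is a reduced pipe dream whose associated word is a reduced word for $w$. The 1432-avoiding hypothesis is what makes this work: it forces the Rothe diagram to have a northwest-compatible structure so that the column/value constraints in $\SVT(D(w))$, together with the flag $(1,2,\ldots,n)$, correctly encode the chute/ladder relations among pipe dreams. For the set-valued extension I would then argue that multiple entries in a single box of $T$ produce exactly the configurations of extra crossings permitted by K-theoretic excited moves, so the map lifts naturally to a bijection $\FSVD(w) \leftrightarrow \KPipes(w_0,w)$.

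As a cross-check (and potential alternative proof), I would verify that both sides satisfy the same recursion: namely, $\mathfrak G_{ws_i} = \pi_i(\mathfrak G_w)$ when $\ell(ws_i)>\ell(w)$ and the move stays within the 1432-avoiding class, with the base case being a dominant 1432-avoiding permutation, where $\FSVD(w)$ collapses to a single tableau yielding $\prod_{(i,j)\in D(w)}(x_i \oplus y_j)$. The inductive step would require exhibiting combinatorial moves on $\FSVD$ that implement $\pi_i$ and respect signs; I would use this as a consistency check on the bijective proof.

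The main obstacle will be showing that the candidate map really lands in $\KPipes(w_0, w)$ and is a bijection, particularly in the set-valued case. The delicate points are: (i) proving that the resulting set of crossings forms a valid K-theoretic pipe dream whose Demazure product is $w$; (ii) verifying that the flag condition $\max T(b) \leq r(b)$ (from $\phi = (1,2,\ldots,n)$) is equivalent to the geometric constraint that crossings lie in the correct region of the staircase for $w_0$; and (iii) invertibility, which requires recovering the row/column of each box from only the pipe-dream crossing data -- this reconstruction is where 1432-avoidance is essential, since it forbids the precise pattern that would make the row/column assignment ambiguous.
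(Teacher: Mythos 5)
First, a point of comparison that matters: the paper does not prove this statement at all. Theorem~\ref{thm:321svtGroth} is imported verbatim from \cite[Theorem 1.1]{Fan.Guo}, so there is no internal proof to measure your attempt against; the question is only whether your proposal would constitute a proof on its own. It does not yet. The map you describe --- send an entry of value $v$ in box $(r,c)$ of $T$ to a crossing at position $(v,\ \lambda_r+\phi_r-c-v+1)$ --- is indeed the only candidate consistent with weight-matching, but every step that carries the content of the theorem is deferred: that distinct entries of $T$ land on distinct cells, that those cells lie inside the staircase $D(w_0)$, that the reading word of the resulting diagram has Demazure product exactly $w$, and that the map surjects onto $\KPipes(w_0,w)$. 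The Demazure-product condition in particular is a global constraint on the crossing configuration that does not follow from the local row/column inequalities defining $\SVT(D(w))$ together with the flag; this is precisely where 1432-avoidance has to do real work rather than be invoked. Your treatment of the set-valued case has an additional problem: ``configurations of extra crossings permitted by K-theoretic excited moves'' is a notion native to the excited-Young-diagram model $\kexcited(\mu,\lambda)$ for \emph{vexillary} permutations (Section~\ref{sec:EYD}), not to staircase pipe dreams for a general 1432-avoiding $w$, so that sentence does not currently have a defined meaning in the setting you are working in.

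Ironically, the route you relegate to a ``cross-check'' is the more promising one, and is essentially how one expects the result to be proved: \cite[Lemma~2.4]{Fan.Guo} (quoted in Section~\ref{sec:proofs}) guarantees that if $r$ is the first ascent of a 1432-avoiding $w\neq w_0$ then $ws_r$ is again 1432-avoiding, which sets up a downward induction on $\ell(w_0)-\ell(w)$ in which one shows that $\pi_r$ carries the generating function of $\FSVD(ws_r)$ to that of $\FSVD(w)$. If you want a self-contained argument, promote that to the main proof and do the (genuinely nontrivial) combinatorial bookkeeping of how $\pi_r$ acts on the tableaux; note also that the $\pi_i$ recursion terminates only at $w_0$, so your base case must be $w_0$ rather than an arbitrary dominant 1432-avoiding permutation.
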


For $T\in \FSVD(w)$, let 
$\#T$ denote the number of entries in $T$. 
 We say $T\in \FSVD(w)$ is \mydef{maximal} if $T'\in \FSVD(w)$ implies $\#T'\leq\#T$.
Now we give a construction of $T_w\in \FSVD(w)$ for a given $1432$-avoiding $w$. Theorem~\ref{thm:1432} proves $T_w$ is maximal. Let $\mdCR{D}$ denote the northmost then westmost maximal diagonal path of $D\subseteq [n]^2$.
For $\mdCR{\sigma_k(w)}\neq \emptyset$, let 
 \begin{align*}
 \NE(\mdCR{\sigma_k(w)})&=\{(i,j)\in D(w)- \mdCR{\sigma_k(w)} \,:  \, (i,j) \text{ lies northeast of } \mdCR{\sigma_k(w)} \}.
\end{align*}
Take $T_0\in \SVT(D(w))$ such that $T_0(i,j)=i$ for $i\in [L(\code(w))]$. 
For $k\in [L(\code(w))-1]$, 
let $T_k\in \SVT(D(w))$ such that for $(i,j)\in D(w)$:
  \[T_k(i,j):=\begin{cases}
    T_{k-1}(i,j)\cup \{\min{T_{k-1}(i,j)}-1\} &\text{if } (i,j)\in \mdCR{\sigma_k(w)},\\
	T_{k-1}(i,j)-1 &\mbox{if }  (i,j)\in \NE(\mdCR{\sigma_k(w)}),\\
		T_{k-1}(i,j) &\text{otherwise,}
	\end{cases}\]
where $T(i,j)-1$ is entrywise subtraction. Let $T_w:=T_{L(\code(w))}$.

\begin{example}\label{ex:buildSkewT}
Below we construct $T_w$ for $w=1462375$. 
\[\begin{picture}(450,100)
\put(0,50){$\scriptstyle{k=0}$:}
\put(25,0){\begin{tikzpicture}[scale=.5]
\draw (0,0) rectangle (7,7);

\draw (1,5) rectangle (2,6);
\draw (1,4) rectangle (2,5);
\draw (2,5) rectangle (3,6);
\draw (2,4) rectangle (3,5);

\draw (4,4) rectangle (5,5);

\draw (4,1) rectangle (5,2);

\put(19,74){$2$}
\put(33,74){$2$}
\put(19,60){$3$}
\put(33,60){$3$}
\put(61,60){$3$}
\put(61,18){$6$}

\filldraw (0.5,6.5) circle (.5ex);
\draw[line width = .2ex] (0.5,0) -- (0.5,6.5) -- (7,6.5);
\filldraw (3.5,5.5) circle (.5ex);
\draw[line width = .2ex] (3.5,0) -- (3.5,5.5) -- (7,5.5);
\filldraw (5.5,4.5) circle (.5ex);
\draw[line width = .2ex] (5.5,0) -- (5.5,4.5) -- (7,4.5);
\filldraw (1.5,3.5) circle (.5ex);
\draw[line width = .2ex] (1.5,0) -- (1.5,3.5) -- (7,3.5);
\filldraw (2.5,2.5) circle (.5ex);
\draw[line width = .2ex] (2.5,0) -- (2.5,2.5) -- (7,2.5);
\filldraw (6.5,1.5) circle (.5ex);
\draw[line width = .2ex] (6.5,0) -- (6.5,1.5) -- (7,1.5);
\filldraw (4.5,0.5) circle (.5ex);
\draw[line width = .2ex] (4.5,0) -- (4.5,0.5) -- (7,0.5);
\end{tikzpicture}}
\put(135,50){$\xrightarrow{k=1}$}
\put(170,0){\begin{tikzpicture}[scale=.5]
\draw (0,0) rectangle (7,7);

\draw (1,5) rectangle (2,6);
\draw (1,4) rectangle (2,5);
\draw (2,5) rectangle (3,6);
\draw (2,4) rectangle (3,5);

\draw (4,4) rectangle (5,5);

\draw (4,1) rectangle (5,2);

\put(16,74){$2 1$}
\put(33,74){$1$}
\put(19,60){$3$}
\put(30,60){$3 2$}
\put(61,60){$2$}
\put(59,18){$6 5$}

\filldraw (0.5,6.5) circle (.5ex);
\draw[line width = .2ex] (0.5,0) -- (0.5,6.5) -- (7,6.5);
\filldraw (3.5,5.5) circle (.5ex);
\draw[line width = .2ex] (3.5,0) -- (3.5,5.5) -- (7,5.5);
\filldraw (5.5,4.5) circle (.5ex);
\draw[line width = .2ex] (5.5,0) -- (5.5,4.5) -- (7,4.5);
\filldraw (1.5,3.5) circle (.5ex);
\draw[line width = .2ex] (1.5,0) -- (1.5,3.5) -- (7,3.5);
\filldraw (2.5,2.5) circle (.5ex);
\draw[line width = .2ex] (2.5,0) -- (2.5,2.5) -- (7,2.5);
\filldraw (6.5,1.5) circle (.5ex);
\draw[line width = .2ex] (6.5,0) -- (6.5,1.5) -- (7,1.5);
\filldraw (4.5,0.5) circle (.5ex);
\draw[line width = .2ex] (4.5,0) -- (4.5,0.5) -- (7,0.5);
\end{tikzpicture}}
\put(280,50){$\xrightarrow{k=2}$}
\put(315,0){\begin{tikzpicture}[scale=.5]
\draw (0,0) rectangle (7,7);

\draw (1,5) rectangle (2,6);
\draw (1,4) rectangle (2,5);
\draw (2,5) rectangle (3,6);
\draw (2,4) rectangle (3,5);

\draw (4,4) rectangle (5,5);

\draw (4,1) rectangle (5,2);

\put(16,74){$2 1$}
\put(33,74){$1$}
\put(19,60){$3$}
\put(30,60){$3 2$}
\put(59,60){$2 1$}
\put(59,18){$6 5$}

\filldraw (0.5,6.5) circle (.5ex);
\draw[line width = .2ex] (0.5,0) -- (0.5,6.5) -- (7,6.5);
\filldraw (3.5,5.5) circle (.5ex);
\draw[line width = .2ex] (3.5,0) -- (3.5,5.5) -- (7,5.5);
\filldraw (5.5,4.5) circle (.5ex);
\draw[line width = .2ex] (5.5,0) -- (5.5,4.5) -- (7,4.5);
\filldraw (1.5,3.5) circle (.5ex);
\draw[line width = .2ex] (1.5,0) -- (1.5,3.5) -- (7,3.5);
\filldraw (2.5,2.5) circle (.5ex);
\draw[line width = .2ex] (2.5,0) -- (2.5,2.5) -- (7,2.5);
\filldraw (6.5,1.5) circle (.5ex);
\draw[line width = .2ex] (6.5,0) -- (6.5,1.5) -- (7,1.5);
\filldraw (4.5,0.5) circle (.5ex);
\draw[line width = .2ex] (4.5,0) -- (4.5,0.5) -- (7,0.5);
\end{tikzpicture}}
\end{picture}
\]
\[\begin{picture}(450,100)
\put(0,50){$\xrightarrow{k=3}$}
\put(25,0){\begin{tikzpicture}[scale=.5]
\draw (0,0) rectangle (7,7);

\draw (1,5) rectangle (2,6);
\draw (1,4) rectangle (2,5);
\draw (2,5) rectangle (3,6);
\draw (2,4) rectangle (3,5);

\draw (4,4) rectangle (5,5);

\draw (4,1) rectangle (5,2);

\put(16,74){$2 1$}
\put(33,74){$1$}
\put(19,60){$3$}
\put(30,60){$3 2$}
\put(59,60){$2 1$}
\put(59,18){$6 5$}

\filldraw (0.5,6.5) circle (.5ex);
\draw[line width = .2ex] (0.5,0) -- (0.5,6.5) -- (7,6.5);
\filldraw (3.5,5.5) circle (.5ex);
\draw[line width = .2ex] (3.5,0) -- (3.5,5.5) -- (7,5.5);
\filldraw (5.5,4.5) circle (.5ex);
\draw[line width = .2ex] (5.5,0) -- (5.5,4.5) -- (7,4.5);
\filldraw (1.5,3.5) circle (.5ex);
\draw[line width = .2ex] (1.5,0) -- (1.5,3.5) -- (7,3.5);
\filldraw (2.5,2.5) circle (.5ex);
\draw[line width = .2ex] (2.5,0) -- (2.5,2.5) -- (7,2.5);
\filldraw (6.5,1.5) circle (.5ex);
\draw[line width = .2ex] (6.5,0) -- (6.5,1.5) -- (7,1.5);
\filldraw (4.5,0.5) circle (.5ex);
\draw[line width = .2ex] (4.5,0) -- (4.5,0.5) -- (7,0.5);
\end{tikzpicture}}
\put(135,50){$\xrightarrow{k=4}$}
\put(170,0){\begin{tikzpicture}[scale=.5]
\draw (0,0) rectangle (7,7);

\draw (1,5) rectangle (2,6);
\draw (1,4) rectangle (2,5);
\draw (2,5) rectangle (3,6);
\draw (2,4) rectangle (3,5);

\draw (4,4) rectangle (5,5);

\draw (4,1) rectangle (5,2);

\put(16,74){$2 1$}
\put(33,74){$1$}
\put(19,60){$3$}
\put(30,60){$3 2$}
\put(59,60){$2 1$}
\put(60,22){$\scriptstyle{6 5}$}
\put(62,15){$\scriptstyle{4}$}

\filldraw (0.5,6.5) circle (.5ex);
\draw[line width = .2ex] (0.5,0) -- (0.5,6.5) -- (7,6.5);
\filldraw (3.5,5.5) circle (.5ex);
\draw[line width = .2ex] (3.5,0) -- (3.5,5.5) -- (7,5.5);
\filldraw (5.5,4.5) circle (.5ex);
\draw[line width = .2ex] (5.5,0) -- (5.5,4.5) -- (7,4.5);
\filldraw (1.5,3.5) circle (.5ex);
\draw[line width = .2ex] (1.5,0) -- (1.5,3.5) -- (7,3.5);
\filldraw (2.5,2.5) circle (.5ex);
\draw[line width = .2ex] (2.5,0) -- (2.5,2.5) -- (7,2.5);
\filldraw (6.5,1.5) circle (.5ex);
\draw[line width = .2ex] (6.5,0) -- (6.5,1.5) -- (7,1.5);
\filldraw (4.5,0.5) circle (.5ex);
\draw[line width = .2ex] (4.5,0) -- (4.5,0.5) -- (7,0.5);
\end{tikzpicture}}
\put(280,50){$\xrightarrow{k=5}$}
\put(315,0){\begin{tikzpicture}[scale=.5]
\draw (0,0) rectangle (7,7);

\draw (1,5) rectangle (2,6);
\draw (1,4) rectangle (2,5);
\draw (2,5) rectangle (3,6);
\draw (2,4) rectangle (3,5);

\draw (4,4) rectangle (5,5);

\draw (4,1) rectangle (5,2);

\put(16,74){$2 1$}
\put(33,74){$1$}
\put(19,60){$3$}
\put(30,60){$3 2$}
\put(59,60){$2 1$}
\put(60,22){$\scriptstyle{6 5}$}
\put(60,15){$\scriptstyle{4 3}$}

\filldraw (0.5,6.5) circle (.5ex);
\draw[line width = .2ex] (0.5,0) -- (0.5,6.5) -- (7,6.5);
\filldraw (3.5,5.5) circle (.5ex);
\draw[line width = .2ex] (3.5,0) -- (3.5,5.5) -- (7,5.5);
\filldraw (5.5,4.5) circle (.5ex);
\draw[line width = .2ex] (5.5,0) -- (5.5,4.5) -- (7,4.5);
\filldraw (1.5,3.5) circle (.5ex);
\draw[line width = .2ex] (1.5,0) -- (1.5,3.5) -- (7,3.5);
\filldraw (2.5,2.5) circle (.5ex);
\draw[line width = .2ex] (2.5,0) -- (2.5,2.5) -- (7,2.5);
\filldraw (6.5,1.5) circle (.5ex);
\draw[line width = .2ex] (6.5,0) -- (6.5,1.5) -- (7,1.5);
\filldraw (4.5,0.5) circle (.5ex);
\draw[line width = .2ex] (4.5,0) -- (4.5,0.5) -- (7,0.5);
\end{tikzpicture}}
\end{picture}
\]
\end{example}

\begin{lemma}\label{lemma:1432FlagFill}
Suppose $w$ in $\Sym_n$ is $1432$-avoiding. Then $T_w\in \FSVD(w)$.
 \end{lemma}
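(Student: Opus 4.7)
The plan is to prove by induction on $k \in \{0, 1, \ldots, L(\code(w))\}$ that each intermediate tableau $T_k$ belongs to $\FSVD(w)$; specializing to $k = L(\code(w))$ then gives $T_w \in \FSVD(w)$. The base case $T_0$ is immediate: all entries in row $i$ equal $i$, so the row-weakly-decreasing condition holds trivially, the column-strict condition follows from the fact that two cells $(i,j), (i',j) \in D(w)$ in the same column satisfy $i < i'$, the flag bound $\max T_0(i,j) = i$ is met with equality, and all entries are positive. For the inductive step, I assume $T_{k-1} \in \FSVD(w)$ and verify that the update rule preserves each required property.

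For the tableau conditions, I partition pairs of cells $a, b \in D(w)$ in the same row or column according to which of the three disjoint regions $\mdCR{\sigma_k(w)}$, $\NE(\mdCR{\sigma_k(w)})$, or ``unchanged'' they lie in. The critical observation is that every cell in $\NE(\mdCR{\sigma_k(w)})$ has all of its entries uniformly decremented by $1$, so any inequality between two such cells is preserved. For a cell $a \in \mdCR{\sigma_k(w)}$, adjoining the entry $\min T_{k-1}(a) - 1$ decreases $\min T_k(a)$ by $1$ without changing $\max T_k(a)$. A case-by-case analysis then reduces each potentially dangerous comparison (a westerly $\NE$-cell and an easterly $\mdCR$-cell in the same row, a northerly $\mdCR$-cell and a southerly $\NE$-cell in the same column, etc.) either to the previously established inequality in $T_{k-1}$ or to a geometric impossibility. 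For example, two cells of $\mdCR{\sigma_k(w)}$ can never share a row or column since the maximal diagonal is strictly increasing in both coordinates, and a $\mdCR{\sigma_k(w)}$-cell cannot have an $\NE$-cell strictly south of it in the same column since $\NE$ lies by definition northeast of the diagonal.

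The main obstacle is verifying positivity of the freshly added entries $\min T_{k-1}(a) - 1$ for $a = (i,j) \in \mdCR{\sigma_k(w)}$, since by this step $(i,j)$ may already have been shifted down several times. I would formalize this by strengthening the inductive hypothesis with an auxiliary invariant of the form $\min T_k(i,j) \geq i - \#\{k' \leq k : (i,j) \in \NE(\mdCR{\sigma_{k'}(w)})\text{ or }(i,j) \in \mdCR{\sigma_{k'}(w)}\}$, and then showing that the right-hand count is uniformly bounded by $i-1$. The key geometric input is the definition $\sigma_{k'}(w) = \{(i,j) \in D(w) : i > k',\, j > w_{k'}\}$, which forces any step $k'$ affecting $(i,j)$ to satisfy $k' < i$, together with $1432$-avoidance, which is exactly the hypothesis that ensures the maximal diagonals $\mdCR{\sigma_{k'}(w)}$ nest coherently so that the total number of prior decrements to $(i,j)$ never exceeds $i - 1$. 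Finally, the flag bound is automatic because entries only decrease or new entries smaller than existing ones are added, so $\max T_k(i,j) \leq \max T_0(i,j) = i$ throughout.
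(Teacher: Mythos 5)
Your overall strategy (induction on $k$ showing each $T_k\in\FSVD(w)$, with the base case and the flag bound handled as you describe) matches the paper's proof, and your treatment of positivity --- bounding the number of steps that can touch a cell $(i,j)$ by $i-1$ because $\sigma_{k'}(w)$ only contains cells in rows strictly below row $k'$ --- is essentially the argument the paper gives. However, there is a genuine gap in your verification of column-strictness. The dangerous configuration is not among those you list: it occurs when $(i,j)\in\mdCR{\sigma_k(w)}$ and the nearest cell $(i',j)\in D(w)$ above it in column $j$ is left \emph{unchanged} at step $k$, i.e., it lies neither on the chosen diagonal nor in $\NE(\mdCR{\sigma_k(w)})$. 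In that case $\min T_k(i,j)=\min T_{k-1}(i,j)-1$ while $\max T_k(i',j)=\max T_{k-1}(i',j)$, so the inductive hypothesis $\max T_{k-1}(i',j)<\min T_{k-1}(i,j)$ only yields the non-strict inequality $\max T_k(i',j)\le\min T_k(i,j)$; it does not rule out equality. This configuration is not geometrically impossible: for $w=1462375$ and $k=4$ the chosen diagonal is $\{(7,5)\}$ and the cell $(3,5)$ directly above it in $D(w)$ is untouched at that step.

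The paper closes exactly this gap with a quantitative strengthening of the column condition: writing $i=i'+h$ (and observing that in this troublesome case necessarily $h>1$), it establishes that $\max T_s(i',j)+h\le \min T_s(i,j)$ for all $s\le i'$, and then controls how much $\max T(i',j)$ and $\min T(i,j)$ can change at steps after $i'$ to conclude $\max T_k(i',j)\le\min T_k(i,j)-1$. Your proposed auxiliary invariant is aimed only at positivity of the entries and supplies no such column slack, so your claim that every comparison reduces to the $T_{k-1}$ inequality or to a geometric impossibility is false as stated; you would need to add a second strengthened invariant of this quantitative kind for the induction to close.
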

 
\begin{proof}
We proceed by showing $T_{k}\in \FSVD(w)$ for $k\in[L(\code(w))]$ by induction on $k$.
By construction, $T_0\in \FSVD(w)$.
Suppose $T_{k-1}\in \FSVD(w)$ for some $k\in[L(\code(w))]$. If $\mdCR{\sigma_k(w)}=\emptyset$, the result follows the inductive assumption since $T_k=T_{k-1}$.

Otherwise, since $T_{k-1}\in \FSVD(w)$ by construction of $T_k$, 
\[\max T_k(i,j)\leq \max T_{k-1}(i,j)\leq i.\] 
Similarly since  
$T_{k-1}$ is decreasing along rows, $T_k$ is clearly decreasing along rows. 
By definition of $T_k$, any $(i,j)$ can be decremented no more than $i-1$ times, so no entry can be decremented to $0$.
Thus it remains to show $T_k$ increases down columns.
Consider some $(i,j)\in \mdCR{\sigma_k(w)}$. Let \[i'=\max\{x<i \,:  \, (x,j)\in D(w)\}.\]
Since $T_{k-1}$ is increasing down columns, it suffices to show that $\max T_{k}(i',j)<\min T_{k}(i,j)$. If $(i',j)\in \NE(\mdCR{\sigma_k(w)})$ or does not exist, the result follows by the construction of $T_k$.

 Otherwise, by the definition of $\mdCR{\sigma_k(w)}$, it follows that $i=i'+h$ for some $h>1$. Then for $s\in[i']$, by the definitions of $T_{s}$ and $\mdCR{\sigma_s(w)}$,
\begin{equation}\label{eq:specialTabInc}
\max T_s(i',j)+h\leq\min T_s(i,j).
\end{equation}
Thus if $k\leq i'$, we are done. 
If $k>i'$, it follows that $\max T_{k}(i',j)=\max T_{i'}(i',j)$ and $\min T_{k}(i,j)\geq \min T_{i'}(i,j)-{h+1}$,
so by Equation~(\ref{eq:specialTabInc}), 
\[\max T_k(i',j)\leq\min T_k(i,j)-1.\]
 Thus $T_{k}\in \FSVD(w)$.
\end{proof}

\subsection{Set-valued Young tableaux}

 \begin{figure}
\setlength{\unitlength}{1.5em}
\[
\begin{tikzpicture}[scale=.5]

\draw[line width = .05ex] (0,5)--(8,5);
\draw[line width = .05ex] (0,4)--(8,4);
\draw[line width = .05ex] (0,3)--(5,3);
\draw[line width = .05ex] (0,2)--(5,2);
\draw[line width = .05ex] (0,1)--(5,1);

\draw[line width = .05ex] (1,6) -- (1,0);
\draw[line width = .05ex] (2,6) -- (2,0);
\draw[line width = .05ex] (3,6) -- (3,0);
\draw[line width = .05ex] (4,6) -- (4,0);
\draw[line width = .05ex] (5,6) -- (5,3);
\draw[line width = .05ex] (6,6) -- (6,3);
\draw[line width = .05ex] (7,6) -- (7,3);

\draw[line width = .3ex] (0,6)--(6,6)--(6,5)--(4,5)--(4,4)--(2,4)--(2,3)--(1,3)--(1,2)--(0,2)--(0,6);

\draw[line width = .5ex, gray] (8,6)--(8,3)--(5,3)--(5,0)--(0,0)--(0,6)--(8.09,6);

\filldraw[red] (5.5,5.5) circle (.5ex);
\draw[line width = .1ex,red] (5.5,5.5) -- (8,3);

\filldraw[red] (3.5,4.5) circle (.5ex);
\draw[line width = .1ex,red] (3.5,4.5) -- (5,3);

\filldraw[red] (1.5,3.5) circle (.5ex);
\draw[line width = .1ex,red] (1.5,3.5) -- (5,0);

\filldraw[red] (0.5,2.5) circle (.5ex);
\draw[line width = .1ex,red] (0.5,2.5) -- (3,0);
\end{tikzpicture}
\hspace{3em} 
\ytableausetup
{boxsize=1.5em}
\raisebox{6em}{\begin{ytableau}
 1  & 1 & 1  & 1 & \text{\footnotesize 123} & 3 \\
 2  & 2 & 23  & 3 \\
 34  & 45 \\
56
\end{ytableau}}
\]
\caption{ Let $v$ be as in Figure~\ref{fig:vex}.  Then $\lambda(v)=(6,4,2,1)$ and $\mu(v)=(8,8,8,5,5,5)$. Pictured on the left is $\lambda(v)\subset \mu(v)$ with the diagonals used to compute $\phi(v)=(3,3,6,6)$ drawn in red. To the right is an element of $\FSVT(v)$.}
\label{fig:flag}  
\end{figure}
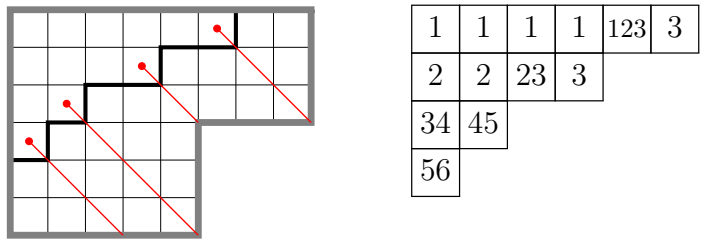

A \mydef{set-valued tableau $T$ of shape} $\lambda=(\lambda_1,\lambda_2,\ldots,\lambda_n)$ is a filling 
of $\lambda$ with nonempty subsets of $\mathbb{Z}_{>0}$ such that for boxes $(a,b)\in\lambda$:
\begin{itemize}
    \item if $a$ lies north of $b$, then $\max T(a)<\min T(b)$, and
    \item if $a$ lies west of $b$, then $\max T(a)\leq\min T(b)$,
\end{itemize}
where $T(a)$  denotes the set of entries of $T$ in box $a$.
Let $\SVT(\lambda)$ be the collection of such tableaux. 
We say a tableau $T \in \SVT(\lambda)$ \mydef{is flagged by}
$\phi=(\phi_1,\phi_2,\ldots,\phi_n)$ if for each box $b$ in row $i$ of $\lambda$, we have $\max{T(b)}\leq \phi_i$. For a vexillary permutation $v$, let 
\[\FSVT(v)=\{T\in \SVT(\lambda(v)) \,:  \, T \text{ is flagged by } \phi(v)\}.\]
An example of some $T\in \FSVT(169247358)$ 
is given in Figure~\ref{fig:flag}.
We note that many different choices of flagging can result in the same underlying set of tableaux.  See \cite[Remark~3.10]{MPP} for further commentary.

\begin{theorem}\cite[Theorem 5.8]{KMY}
If $v\in \Sym_n$ is vexillary, the double Grothendieck polynomial $\mathfrak G_{v}(\mathbf x;\mathbf y)$ has the following expansion:
\begin{equation}\label{equation:gVex}
\mathfrak G_v(\mathbf{x};\mathbf{y})=\sum_{T\in \FSVT(v)}(-1)^{\#T-|\lambda|}\prod_{e\in T}x_{\val(e)}\oplus y_{\val(e)+c(e)-r(e)},
\end{equation}
where the product is over entries in $T$ whose value is $\val(e)$ and $c(e),r(e)$ are the column and row indices of $e$.
\end{theorem}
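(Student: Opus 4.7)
The plan is to derive the stated formula directly from Proposition~\ref{prop:vexkexcited} via an explicit, weight- and sign-preserving bijection between $\FSVT(v)$ and $\kexcited(\mu(v),\lambda(v))$. Define $\Phi:\FSVT(v)\to\kexcited(\mu(v),\lambda(v))$ by sending a set-valued tableau $T$ to
\[ \Phi(T)=\{\,(\val(e),\,\val(e)+c(e)-r(e)) : e\in T\,\}. \]
Each cell $(i,j)=(\val(e),\val(e)+c(e)-r(e))$ sits on the diagonal $j-i=c(e)-r(e)$, which is the same diagonal as $(r(e),c(e))\in\lambda(v)$, and lies weakly below and weakly to the right (since $\val(e)\geq r(e)$). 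The inverse records, for each box of $\lambda(v)$, the set of row indices of the cells of $D\in\kexcited(\mu(v),\lambda(v))$ that lie on its diagonal weakly below it.

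First I would verify $\Phi(T)\subseteq\mu(v)$. Because $v$ is vexillary, the flag $\phi(v)$ is defined so that the maximum row index among cells of $\mu(v)$ on the diagonal through $(r,\lambda_r(v))\in\lambda(v)$ is exactly $\phi_r$; consequently, the SVT flagging condition $\max T(b)\leq\phi_{r(e)}$ translates precisely into containment of $\Phi(T)$ in $\mu(v)$. Next I would check that $\Phi(T)$ is reachable from $D_{\tt top}(\mu(v),\lambda(v))$ by a sequence of (possibly K-theoretic) excited moves. This step is inductive on the total downward displacement of entries: the row-weak inequality $\max T(a)\leq\min T(b)$ for $a$ west of $b$ and the column-strict inequality $\max T(a)<\min T(b)$ for $a$ north of $b$ correspond exactly to the admissibility conditions of the local moves~\eqref{eq:EYDsimple} and~\eqref{eq:EYDkTh}, while duplication of a value within $T(a)$ corresponds to a K-theoretic move. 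The weight identity is then immediate: each entry $e$ contributes $x_{\val(e)}\oplus y_{\val(e)+c(e)-r(e)}$, which is exactly the $\wt$-factor of its image cell, and $\#\Phi(T)=\#T$ so the signs agree.

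The main obstacle is establishing well-definedness and surjectivity of $\Phi$ rigorously. Well-definedness requires a local-to-global argument showing that every valid flagged SVT corresponds to a diagram reachable by the K-excited mutation rules, and surjectivity requires showing that for an arbitrary $D\in\kexcited(\mu(v),\lambda(v))$, distributing the row indices of cells of $D$ back along diagonals into $\lambda(v)$ yields a filling satisfying the tableau inequalities together with the flag. Both directions reduce to tracking how excited and K-theoretic moves affect row comparisons between adjacent diagonals; the cleanest approach is induction on the number of moves applied, using the observation that a single move of type~\eqref{eq:EYDsimple} or~\eqref{eq:EYDkTh} alters the row indices of one or two cells by exactly one, which corresponds to incrementing one entry of the tableau (possibly creating a new set member). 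Once the bijection is established, applying Proposition~\ref{prop:vexkexcited} and reindexing the sum over $\kexcited(\mu(v),\lambda(v))$ as a sum over $\FSVT(v)$ yields the desired identity.
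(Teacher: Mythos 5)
This theorem is not proved in the paper at all: it is quoted from Knutson--Miller--Yong, so there is no internal argument to compare yours against. The substantive issue with your proposal is that, \emph{within the logical structure of this paper}, it is circular. You take Proposition~\ref{prop:vexkexcited} as your starting point and transfer it to tableaux via the bijection $\Phi$; but the paper's proof of Proposition~\ref{prop:vexkexcited} consists precisely of the observation that the flagged set-valued tableaux of \cite{KMY} ``can be identified with $\kexcited(\mu(v),\lambda(v))$'' --- that is, it is deduced \emph{from} the very tableau formula~\eqref{equation:gVex} you are trying to prove, via the same identification. To make your route legitimate you would need an independent proof of the K-theoretic excited-diagram formula (for instance via the Graham--Kreiman localization formula for structure sheaves of Schubert varieties, or via bumpless pipe dreams as in \cite{Weigandt.BPD}), or else prove~\eqref{equation:gVex} directly (e.g.\ by induction using the transition recurrence of Theorem~\ref{thm:transition}, in the spirit of Section~\ref{sec:proofs}, or by the Gr\"obner degeneration argument of \cite{KMY} itself).

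Setting the circularity aside, the bijection you describe is the standard one and the weight/sign bookkeeping is right, but the part you defer --- well-definedness and surjectivity of $\Phi$ --- is where all the content lives, and one point needs more care than your sketch suggests: you assert that the flag condition $\max T(b)\le\phi_{r(e)}$ ``translates precisely into containment of $\Phi(T)$ in $\mu(v)$.'' The flag $\phi_i$ is computed from the diagonal through the \emph{corner} cell $(i,\lambda_i(v))$, whereas a non-corner cell of row $i$ sits on a more westerly diagonal, along which $\mu(v)$ may extend strictly below row $\phi_i$; so containment in $\mu(v)$ is a priori weaker than the flag condition for those cells, and one must invoke column-strictness (or the equivalence of flaggings discussed in \cite[Remark~3.10]{MPP}) to close that gap. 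As written, the proposal is a plausible reduction of the theorem to Proposition~\ref{prop:vexkexcited} plus a known combinatorial bijection, but it is not a self-contained proof and, relative to this paper, it proves the statement from a consequence of itself.
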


 For $T\in \FSVT(v)$ let 
$\#T$ denote the number of entries in $T$. We say $T\in \FSVT(v)$ is \mydef{maximal} if $\#T=\max\{\#U:U\in \FSVT(v)\}$.
Now we give a construction of $U_v\in \SVT(v)$ for a given vexillary $v$. Theorem~\ref{thm:vexDeg} proves $U_v$ is maximal.

Let $\maCR{\lambda}$ denote the northmost then westmost maximal antidiagonal path of $\bigcup_i\mu_i\subseteq \lambda$.
For $\maCR{\tau_k(v)}\neq \emptyset$, let 
 \begin{align*}
 \SE(\maCR{\tau_k(v)})&=\{(i,j)\in \lambda- \maCR{\tau_k(v)} \,:  \, (i,j) \text{ lies southeast of } \maCR{\tau_k(v)} \}.
\end{align*}

Take $U_0\in \SVT(\lambda)$ such that $U_0(i,j)=i$ for $i\in[\ell(\lambda)]$. 
For $k\in[\ell(\lambda)-1]$, 
let $U_k\in \SVT(\lambda)$ such that for $(i,j)\in \lambda:$
  \[U_k(i,j):=\begin{cases}
    U_{k-1}(i,j)\cup \{\max{U_{k-1}(i,j)}+1\} &\text{if } (i,j)\in \maCR{\tau_k(v)},\\
	U_{k-1}(i,j)+1 &\mbox{if }  (i,j)\in \SE(\maCR{\tau_k(v)}),\\
		U_{k-1}(i,j) &\text{otherwise,}
	\end{cases}\]
where $U(i,j)+1$ is entrywise addition. Let $U_v:=U_{\ell(\lambda)}$.

\begin{remark}\label{rmk:VexFlagFill}
By a similar argument to Lemma~\ref{lemma:1432FlagFill}, it follows that $U_v\in {\FSVT}(v)$.
By Theorem~\ref{thm:vexDeg}, it follows that $U_v$ is maximal. 
\end{remark}

\begin{example}\label{ex:buildVexT}
Let $v=169247358$.  From Figure~\ref{fig:vex}, we saw $\lambda(v)=(6,4,2,1)$.  Furthermore, $\phi(v)=(3,3,6,6)$. Below is the construction of $U_v$ from $U_0$.
\[\begin{picture}(460,70)
\put(0,-50){\ytableausetup
{boxsize=1.3em}
\raisebox{8.5em}{\begin{ytableau}
 1  & 1 & 1  & 1 & 1 & 1 \\
 2  & 2 & 2  & 2 \\
 3  & 3 \\
 4  
\end{ytableau}}}
\put(95,25){$\xrightarrow{k=1}$}
\put(120,-50){\ytableausetup
{boxsize=1.3em}
\raisebox{8.5em}{\begin{ytableau}
1  & 1 & 1  & 12 & 2 & 2 \\
 2  & 2 & 23  & 3 \\
 3  & 34 \\
 45  
\end{ytableau}}}
\put(215,25){$\xrightarrow{k=2}$}
\put(240,-50){\ytableausetup
{boxsize=1.3em}
\raisebox{8.5em}{\begin{ytableau}
1  & 1 & 1  & 12 & 2 & 23 \\
 2  & 2 & 23  & 3 \\
 3  & \scriptstyle{345} \\
 \scriptstyle{456}  
\end{ytableau}}}
\put(335,25){$\xrightarrow{k=3}$}
\put(360,-50){\ytableausetup
{boxsize=1.3em}
\raisebox{8.5em}{\begin{ytableau}
 1  & 1 & 1  & 12 & 2 & 23 \\
 2  & 2 & 23  & 3 \\
 3  & \ \\
 \scriptstyle{456}
\end{ytableau}}}
\put(384,27){$\scriptstyle{34}$}
\put(384,20){$\scriptstyle{56}$}
\end{picture}
\]
\end{example}

 \section{Proofs of degree formulas}
\label{sec:proofs}

In this section, we prove our Grothendieck degree formulas for 1432-avoiding permutations and vexillary permutations to deduce our main theorems.

\subsection{Proof of Theorem~\ref{thm:1432}}

Recall, $\mdCR{D}$ is the northmost then westmost diagonal path of $D\subseteq [n]^2$
and
 \[\sigma_k(w)=\{ (i,j)\in D(w) \,:  \, i>k, j>w(k)\}.\]
For brevity, define $\diagstat(w)=\#D(w) + \sum_{k=1}^n \maxsizediag(\sigma_k(w))$.

We start by recalling a lemma from \cite{Fan.Guo}.
\begin{lemma}[{\cite[Lemma~2.4]{Fan.Guo}}]
Let $w\neq w_0$ be a 1432-avoiding permutation.  If $r$ is the first ascent of $w$, then $ws_r$ is also 1432-avoiding.
\end{lemma}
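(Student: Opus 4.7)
The plan is to argue by contrapositive: assume $w' := ws_r$ contains a $1432$ pattern and deduce that $w$ does too. The key structural input is that $r$ being the first ascent of $w$ forces $w_1 > w_2 > \cdots > w_r$ and $w_r < w_{r+1}$, and that $w'$ differs from $w$ only in having its entries at positions $r$ and $r+1$ transposed. So in $w'$, positions $1, \ldots, r-1$ still carry a strictly decreasing sequence of values, while $(w')_r = w_{r+1}$ and $(w')_{r+1} = w_r$.

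Suppose $w'$ has a $1432$ pattern at positions $h < i < j < k$ with $w'_h < w'_k < w'_j < w'_i$. I would do a case analysis on the intersection $T := \{h, i, j, k\} \cap \{r, r+1\}$. If $T = \emptyset$, the same pattern appears verbatim in $w$. If $T = \{r+1\}$, I use that $(w')_{r+1} = w_r$ and $w_r$ is also the $w$-value at the adjacent free position $r$. Concretely: when $r+1 = h$, the tuple $(r, i, j, k)$ in $w$ carries values $(w_r, w_i, w_j, w_k)$ inheriting the chain $w_r < w_k < w_j < w_i$ from the pattern; when $r+1 = i$, the tuple $(h, r, j, k)$ works similarly, using that $h < r$ (since $r \notin T$). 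When $r+1 \in \{j, k\}$, both of the smaller indices $h, i$ (resp.\ $h, i, j$) lie in $[1, r-1]$, so the strict decrease of $w$ on $[1, r]$ forces $w_h > w_i$, contradicting $w'_h = w_h < w_i = w'_i$. The case $T = \{r\}$ is symmetric: transfer the position $r$ (which has $w'$-value $w_{r+1}$) to position $r+1$ in $w$; the subcase $r = h$ is even easier because $w_r < w_{r+1}$ only makes the smallest value of the pattern smaller.

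If $T = \{r, r+1\}$, the two indices must occupy two adjacent slots in the ordered tuple $h < i < j < k$, yielding three subcases. For $\{h, i\} = \{r, r+1\}$, the required inequality $w'_h < w'_i$ reads $w_{r+1} < w_r$, directly contradicting the ascent $w_r < w_{r+1}$. For $\{i, j\} = \{r, r+1\}$, the pattern chain gives $w'_k = w_k < w'_j = w_r$, while $h < r$ forces $w_h > w_r$, so $w_h > w_k$ contradicts $w'_h < w'_k$. For $\{j, k\} = \{r, r+1\}$, one has $h < i \le r-1$, and once again the strict decrease of $w$ on $[1, r]$ contradicts $w'_h < w'_i$.

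The main obstacle is purely bookkeeping, not a deep extra idea: every case is forced to collapse either into a transported $1432$ pattern in $w$ at a valid quadruple of indices, or into a clash with the monotonicity $w_1 > \cdots > w_r < w_{r+1}$. The slight subtlety is making sure that when transferring positions between $r$ and $r+1$, the free position is available (which is exactly why the subcases split according to where $r$ or $r+1$ sits among the four pattern slots) and that the resulting quadruple remains strictly increasing in position; these checks are the only thing that needs verifying.
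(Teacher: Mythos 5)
Your case analysis is correct. The paper does not prove this lemma at all --- it is quoted verbatim from Fan--Guo \cite[Lemma~2.4]{Fan.Guo} with no argument given --- so there is nothing internal to compare against; what you have written is a self-contained elementary verification. I checked each branch: the transported quadruples $(r,i,j,k)$ (for $h\in\{r,r+1\}$) and $(h,r,j,k)$ resp.\ $(h,r+1,j,k)$ (for $i\in\{r+1\}$ resp.\ $i=r$) are strictly increasing in position and inherit the chain $w_h<w_k<w_j<w_i$, and every remaining branch collapses against either the strict decrease $w_1>\cdots>w_r$ (which kills any pattern whose two smallest positions both lie in $[1,r-1]$, and also the $\{i,j\}=\{r,r+1\}$ branch via $w_h>w_r>w_k$) or the ascent $w_r<w_{r+1}$ (which kills $\{h,i\}=\{r,r+1\}$). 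The one point worth making explicit if you write this up is that in the case $T=\{r,r+1\}$ the two indices are consecutive integers and hence must occupy adjacent slots of $h<i<j<k$; you say this, and it is what reduces that case to exactly three subcases. Note also that the hypothesis of $1432$-avoidance of $w$ is never used --- the statement holds for any $w\neq w_0$ with first ascent $r$ --- which is consistent with your contrapositive never invoking it.
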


\begin{proposition}
	\label{prop:maxtabsize}
	If $w$ is 1432-avoiding, there exists $T\in \FSVD(w)$ such that \[\#T=\diagstat(w).\]  In particular, $\deg(\mathfrak G_w)\geq \diagstat(w)$.
\end{proposition}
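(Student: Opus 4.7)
The plan is to verify that the tableau $T_w$ constructed explicitly just before Lemma~\ref{lemma:1432FlagFill} is the required witness. Since Lemma~\ref{lemma:1432FlagFill} already certifies $T_w \in \FSVD(w)$, the only work remaining is to count its entries and then translate that count into a lower bound on $\deg(\mathfrak G_w)$.

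For the entry count, I would track $\#T_k$ inductively along the construction. Starting from $T_0$, every cell of $D(w)$ holds a single entry, so $\#T_0 = \#D(w)$. At each iteration $k$, the construction adds a new entry (namely $\min T_{k-1}(i,j) - 1$) to each cell $(i,j) \in \mdCR{\sigma_k(w)}$, while cells in $\NE(\mdCR{\sigma_k(w)})$ only have their entries decremented (entrywise subtraction by $1$, which preserves cardinality), and all remaining cells are untouched. Consequently
\[\#T_k - \#T_{k-1} = |\mdCR{\sigma_k(w)}| = \maxsizediag(\sigma_k(w)).\]
Since $\sigma_k(w) = \emptyset$ whenever $k \geq L(\code(w))$ (no cell of $D(w)$ has row index exceeding $L(\code(w))$), telescoping yields
\[\#T_w \;=\; \#D(w) + \sum_{k=1}^{L(\code(w))} \maxsizediag(\sigma_k(w)) \;=\; \#D(w) + \sum_{k=1}^n \maxsizediag(\sigma_k(w)) \;=\; \diagstat(w),\]
as desired.

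For the ``in particular'' assertion, I would invoke the Fan--Guo expansion (Theorem~\ref{thm:321svtGroth}) and specialize $\mathbf y = 0$ to recover $\mathfrak G_w(\mathbf x)$. Each $T \in \FSVD(w)$ then contributes the monomial $\prod_{e \in T} x_{\val(e)}$ of total degree $\#T$ with coefficient-sign $(-1)^{\#T - \#D(w)}$, which depends only on $\#T$. In particular every tableau of cardinality $\diagstat(w)$ contributes with the same sign, so no cancellation can occur among degree-$\diagstat(w)$ monomials, and the contribution of $T_w$ guarantees $\deg(\mathfrak G_w) \geq \diagstat(w)$.

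The main subtlety to keep in mind is distinguishing cells in $\mdCR{\sigma_k(w)}$ (which each acquire a new entry) from cells in $\NE(\mdCR{\sigma_k(w)})$ (whose entries are merely relabeled): both kinds of cells are modified at step $k$, but only the former contribute to the growth of $\#T_k$. Since the construction of $T_w$ is explicit and $\NE$-cells are modified by entrywise subtraction, this distinction is immediate from the definitions, so no genuine obstacle arises beyond careful bookkeeping.
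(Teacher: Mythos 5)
Your proposal is correct and follows essentially the same route as the paper: the paper's proof is the one-line observation that $T_w\in\FSVD(w)$ by Lemma~\ref{lemma:1432FlagFill} and $\#T_w=\diagstat(w)$ by construction, which is exactly the bookkeeping you carry out explicitly. Your additional justification of the ``in particular'' clause (no cancellation in top degree because the sign $(-1)^{\#T-\#D(w)}$ depends only on $\#T$) is a correct elaboration of what the paper leaves implicit.
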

\begin{proof}
    This follows by Lemma~\ref{lemma:1432FlagFill} since $\#T_w=\diagstat(w)$ by construction.
\end{proof}

\begin{lemma}
	\label{lemma:maxtabflag}
	Let $w\neq w_0$ be a 1432-avoiding permutation and suppose $r$ is the first ascent of $w$.  If there is a maximal diagonal path in $\sigma_r(w)$ which has no cells in row $r+1$, then there exists a maximal set-valued Rothe tableau for $w$ such that the entries in row $r+1$ restricted to $\sigma_r(w)$ are all strictly less than $r+1$.
\end{lemma}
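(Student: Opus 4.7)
The plan is to modify the construction of $T_w$ at step $k=r$ by swapping in an alternative maximal diagonal path in place of the canonical $\mdCR{\sigma_r(w)}$. Let $D^*=\{(i_1,j_1),\dots,(i_\ell,j_\ell)\}$ (with rows and columns strictly increasing) be a maximal diagonal path in $\sigma_r(w)$ containing no cell of row $r+1$, as guaranteed by the hypothesis; then necessarily $i_1\geq r+2$ and $j_1>w_r$. The first step is the \emph{key observation}: for every cell $(r+1,j)\in D(w)$ with $j>w_r$ we must have $j\geq j_1$, since otherwise $\{(r+1,j)\}\cup D^*$ would be a diagonal path in $\sigma_r(w)$ of size $|D^*|+1$, contradicting the maximality of $D^*$. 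Because $r+1<i_1$, each such $(r+1,j)$ lies weakly northeast of $(i_1,j_1)$, and since it also belongs to $\sigma_r(w)\setminus D^*$, this puts $(r+1,j)\in \NE(D^*)$.

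Next, I would define $\widetilde T_0,\widetilde T_1,\dots,\widetilde T_{L(\code(w))}$ by the same recursion used for $T_w$, substituting $D^*$ for $\mdCR{\sigma_r(w)}$ at step $k=r$, and set $\widetilde T_w:=\widetilde T_{L(\code(w))}$. The verification that $\widetilde T_w\in \FSVD(w)$ follows the proof of Lemma~\ref{lemma:1432FlagFill} verbatim for $k\neq r$; for $k=r$, the relevant check is that for each $(i,j)\in D^*$ and each cell $(i',j)\in D(w)$ directly above $(i,j)$ in column $j$, either (a) $i'>r$, in which case the same weak-northeast argument used for row $r+1$ gives $(i',j)\in \NE(D^*)$ and column-strict increase is preserved by the standard decrement/increment analysis, or (b) $i'\leq r$, in which case $h:=i-i'\geq (r+2)-r=2>1$, placing us in the ``gap $>1$'' branch of the original proof where the structural inequality $\max \widetilde T_s(i',j)+h\leq \min \widetilde T_s(i,j)$ for $s\leq i'$ propagates by induction exactly as before. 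The row-weakly-decreasing and flag conditions are checked analogously. Since $|D^*|=\maxsizediag(\sigma_r(w))=|\mdCR{\sigma_r(w)}|$, we obtain $\#\widetilde T_w=\#T_w=\diagstat(w)$, so $\widetilde T_w$ is maximal.

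Finally, the target flag property drops out of the key observation: at step $k=r$ each $(r+1,j)\in D(w)\cap\sigma_r(w)$ lies in $\NE(D^*)$, so all of its entries decrement by $1$; since $\max \widetilde T_{r-1}(r+1,j)\leq r+1$ by the flag condition already in force, the maximum after this step is at most $r$. For $k>r$ no cell of row $r+1$ can belong to $\mdCR{\sigma_k(w)}$ (those cells have row index $>k>r$), so row $r+1$ entries only stay the same or decrease further. Hence every entry of $\widetilde T_w$ in row $r+1\cap\sigma_r(w)$ is at most $r$, i.e., strictly less than $r+1$. The \textbf{main obstacle} is the validity check at step $k=r$: one must confirm that the inductive argument of Lemma~\ref{lemma:1432FlagFill}, which implicitly used the north-then-west canonical choice of $\mdCR$, continues to close with an arbitrary maximal $D^*$. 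The case split on $(i',j)$ being in or out of $\sigma_r(w)$ turns out to cover exactly the two branches already handled in that proof, so no essentially new combinatorics is needed.
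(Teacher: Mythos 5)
Your approach is genuinely different from the paper's: the paper takes an \emph{arbitrary} maximal tableau $T\in\FSVD(w)$ that violates the desired bound in row $r+1$ and surgically relocates the offending entries (pushing each eastmost $r+1$ in row $r+1$ down along a chain of boxes ${\sf b}_i,{\sf c}_i$) to produce another tableau $T'$ with $\#T'=\#T$ and the required flag property. You instead modify the explicit construction of $T_w$, swapping a row-$(r+1)$-avoiding maximal diagonal path $D^*$ for $\mdCR{\sigma_r(w)}$ at step $k=r$. Unfortunately, this route has a fatal circularity.

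The gap is in the final inference ``$\#\widetilde T_w=\diagstat(w)$, so $\widetilde T_w$ is maximal.'' Maximality means $\#\widetilde T_w\geq \#U$ for every $U\in\FSVD(w)$, which by Theorem~\ref{thm:321svtGroth} is equivalent to $\#\widetilde T_w=\deg(\mathfrak G_w)$. At the point where Lemma~\ref{lemma:maxtabflag} is invoked (inside the induction proving Theorem~\ref{thm:1432}), one only knows $\diagstat(w)\leq\deg(\mathfrak G_w)$ (Proposition~\ref{prop:maxtabsize}); the reverse inequality is precisely what Theorem~\ref{thm:1432} establishes, and its proof relies on this lemma. Concretely, in Case~2 of that proof the lemma is used to extend a \emph{maximal} tableau $T$ (with $\#T=\deg(\mathfrak G_w)$) to $T'\in\FSVD(ws_r)$ with $\#T'=\#T+1$, yielding $\deg(\mathfrak G_{ws_r})\geq\deg(\mathfrak G_w)+1$ and hence the needed strict inequality. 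Your tableau only gives $\deg(\mathfrak G_{ws_r})\geq\diagstat(w)+1=\diagstat(ws_r)$, which is already known from the induction hypothesis and tells you nothing about $\deg(\mathfrak G_w)$. So your argument proves a weaker statement (existence of a tableau of size $\diagstat(w)$ with the row-$(r+1)$ flag property) that cannot substitute for the lemma, and as a proof of the lemma as stated it assumes what is to be proved. A secondary, lesser concern: your validity check of $\widetilde T_w\in\FSVD(w)$ leans on the argument of Lemma~\ref{lemma:1432FlagFill}, whose column-strictness step and the propagation of the inequality $\max T_s(i',j)+h\leq\min T_s(i,j)$ use the northmost-then-westmost choice of $\mdCR{\sigma_s(w)}$ at \emph{every} step $s$; replacing the path at step $r$ perturbs the sets $\NE$ and interacts with later steps, so ``verbatim'' is too quick, though this part could likely be repaired. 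The circularity cannot.
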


\begin{proof}
Suppose $w$ is such that there is a maximal diagonal path in $\sigma_r(w)$ which has no cells in row $r+1$. Consider maximal $T\in \FSVD(w)$ such that $T$ has boxes containing $r+1$ in row $r+1$ restricted to $\sigma_r(w)$. 
We will construct $T'\in\FSVD(w)$ such that $\#T'=\#T$ and such that the entries in row $r+1$ restricted to $\sigma_r(w)$ are all strictly less than $r+1$.

Let ${\sf b}_1$ denote the box containing the eastmost occurrence of $r+1$ in row $r+1$ in $T$. For $1<i\leq \#\mdCR{\sigma_r(w)}$, we
define ${\sf b}_i\in \sigma_r(w)$ as the box containing the northmost, then eastmost occurrence of $r+i$ in $T$, in the region strictly east of ${\sf b}_{i-1}$. Thus $\{{\sf b}_i\}_{i\in[\sigma_r(w)]}$ forms a diagonal path.

Let ${\sf c}_i$ denote the northmost box of $\sigma_r(w)$ lying directly south of ${\sf b}_i$ for each $i\in[\#\mdCR{\sigma_r(w)}]$. By the assumption that there is a maximal diagonal path in $\sigma_r(w)$ which has no cells in row $r+1$ and the definition of ${\sf b}_i$, $\{{\sf c}_i\}_{i\in [k]}$ exists for some $1\leq k\leq \#\mdCR{\sigma_r(w)}$.
Let $P$ be constructed as follows:
\[P = \{{\sf b}_1\}\cup \{{\sf b}_i \,:  \, {\sf b}_{i-1}\in P \text{ and } {\sf c}_{i-1} \text{ lies in the same row as }  {\sf b}_{i}\}.\]
Let $P'=\{{\sf c}_i \,:  \, {\sf b}_i\in P\}$. 
By maximality of $T$, it follows that $\{r+i,r+i-1\}\subseteq T({\sf b}_{i})$ for each $i\in[\#P]$.
Take $T'$ such that 
 \[T'(x,y):=\begin{cases}
    T({\sf b}_{i})\setminus\{r+i\} &\text{if } (x,y)={\sf b}_{i},\\
     T({\sf c}_{i})\cup\{r+i\} &\text{if } (x,y)={\sf c}_{i},\\
    T(x,y)-1 &\text{if } (x,y)\text{ lies directly between } {\sf c}_{i} \text{ and } {\sf b}_{i+1},\\
    T(x,y)-1 &\text{if } (x,y)=(r+1,y)\in \sigma_r(w) \text{, lying west of } {\sf b}_{1},\\
		T(x,y) &\text{otherwise.}
	\end{cases}\]
It is straightforward to check $T'\in \FSVD(w)$. 
Since $\#T=\#T'$, $T'$ is of the desired form. 
\end{proof}

 \begin{proposition}\label{prop:diag1432Move}
 Suppose $w\in \Sym_n$ is $1432$-avoiding.
 Let $r$ denote the position of the first ascent of $w$ and $\{c_m<\dots<c_0\}=\{w_r\leq i \leq w_{r+1} \,:  \, (r+1,i)\in D(w)\}$. Then 
    \[D(w\cdot s_r)=\left(D(w)- \{ (r+1,c_i):0\leq i\leq m\}\right )\cup \{ (r,c_i):0\leq i\leq m\}\cup \{(r,w_r)\}.\]
 \end{proposition}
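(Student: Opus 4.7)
The plan is to verify the claim by a direct comparison of $D(w)$ and $D(w s_r)$ using the definition $D(u)=\{(i,j): u_i>j,\ u^{-1}_j>i\}$, exploiting the fact that $ws_r$ differs from $w$ only in the values at positions $r$ and $r+1$. Concretely, I would first record the identities $(ws_r)_r=w_{r+1}$, $(ws_r)_{r+1}=w_r$, and $(ws_r)_i=w_i$ for $i\notin\{r,r+1\}$, together with $(ws_r)^{-1}(w_r)=r+1$, $(ws_r)^{-1}(w_{r+1})=r$, and $(ws_r)^{-1}(j)=w^{-1}(j)$ for $j\notin\{w_r,w_{r+1}\}$. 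From these one sees immediately that $(i,j)\in D(w) \iff (i,j)\in D(ws_r)$ whenever $i\notin\{r,r+1\}$, so the symmetric difference $D(w)\triangle D(ws_r)$ is confined to rows $r$ and $r+1$. The problem then reduces to a case analysis in those two rows.

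For row $r$, since $r$ is an ascent we have $w_r<w_{r+1}$, so the row-condition $(ws_r)_r>j$ is strictly weaker than $w_r>j$. For $j<w_r$ the column-condition is unchanged (here $j\neq w_r,w_{r+1}$), so membership in row $r$ agrees for $w$ and $ws_r$. For $j=w_r$, the cell is a dot of $w$ (hence not in $D(w)$) but all inequalities for $ws_r$ hold, producing the new box $(r,w_r)$. For $w_r<j<w_{r+1}$, the cell is not in $D(w)$ (row fails), and lies in $D(ws_r)$ iff $(ws_r)^{-1}(j)=w^{-1}(j)>r$, which, since $j\neq w_r,w_{r+1}$, is equivalent to $w^{-1}(j)\geq r+2$, i.e.\ $(r+1,j)\in D(w)$; thus the new cells in row $r$ are precisely $(r,c_i)$ for the given $c_i$'s.

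Row $r+1$ is handled by the mirror-image argument: the row-condition becomes strictly stricter (requiring $j<w_r$ in $ws_r$ versus $j<w_{r+1}$ in $w$), so columns $j\geq w_{r+1}$ and $j=w_r$ (where $w^{-1}(w_r)=r\not>r+1$) contribute nothing; columns $j<w_r$ have unchanged membership; and columns $w_r<j<w_{r+1}$ that were in $D(w)$ are removed, this being exactly the set $\{(r+1,c_i):0\le i\le m\}$. Combining these changes yields the stated equality.

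The main obstacle is essentially bookkeeping: the boundary columns $j=w_r$ and $j=w_{r+1}$ each require a small separate check to confirm they are not silently added or removed, and one must be careful about the precise equivalence between $w^{-1}(j)>r$ and $(r+1,j)\in D(w)$ for $j$ strictly between $w_r$ and $w_{r+1}$. Notably, neither the 1432-avoiding hypothesis nor the choice of $r$ as the \emph{first} ascent enters the argument; any ascent position would suffice, and these hypotheses are present only because the proposition will be applied in that setting elsewhere in the paper.
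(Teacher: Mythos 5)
Your proposal is correct and matches the paper's approach: the paper's own proof is a one-line assertion that the identity "follows by the definition of $D(w)$, since $r$ is the first ascent of $w$," and your case analysis over rows $r$, $r+1$ and the column ranges $j<w_r$, $j=w_r$, $w_r<j<w_{r+1}$, $j\geq w_{r+1}$ is exactly the definition-check being invoked. Your side remark is also accurate: only the fact that $r$ is an ascent (so $w_r<w_{r+1}$) is used, and neither 1432-avoidance nor the "first ascent" condition is needed for this particular identity.
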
 

 \begin{proof}
 This follows by the definition of $D(w)$, since $r$ is the first ascent of $w$.
\end{proof}

\begin{lemma}
	\label{lemma:maxdiag}
	Let $w\neq w_0$ be a 1432-avoiding permutation, and suppose $r$ is the first ascent of $w$.  If there is a maximal diagonal path in $\sigma_r(w)$ which has no cells in row $r+1$, then
	\[\diagstat(w)+1=\diagstat(ws_r).\]  Otherwise, \[\diagstat(w)=\diagstat(ws_r).\]
\end{lemma}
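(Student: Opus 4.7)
The strategy is to compute the change in $\diagstat$ index by index. Since $r$ is an ascent, $\ell(ws_r) = \ell(w) + 1$, so by Proposition~\ref{prop:diag1432Move}, $\#D(ws_r) = \#D(w) + 1$; it remains to show
\[\sum_{k=1}^n \rho_d(\sigma_k(ws_r)) - \sum_{k=1}^n \rho_d(\sigma_k(w)) \in \{-1, 0\},\]
with value $0$ in the ``max diagonal avoiding row $r+1$'' case and $-1$ otherwise.

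The plan for the outer indices is straightforward. For $k > r+1$, both $(ws_r)_k = w_k$ and $D(w), D(ws_r)$ agree on rows beyond $r+1$, so $\sigma_k$ is unchanged. For $k < r$ I would use that $w$ is decreasing on $[1, r]$, giving $w_k > w_r$; this ensures the new box $(r, w_r)$ and all of row $r$ of $D(w)$ (which lies in columns $< w_r$) miss $\sigma_k$ entirely. Hence $\sigma_k(ws_r)$ differs from $\sigma_k(w)$ only by relocating each cell $(r+1, c_i)$ with $c_i > w_k$ into $(r, c_i)$. Since row $r$ of $\sigma_k(w)$ and row $r+1$ of $\sigma_k(ws_r)$ are both empty, the swap $(r+1, c_i) \leftrightarrow (r, c_i)$ gives a length-preserving bijection between diagonal paths in the two sets, yielding $\rho_d(\sigma_k(w)) = \rho_d(\sigma_k(ws_r))$.

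The main case is $k \in \{r, r+1\}$. Here I would establish the two identities
\[\sigma_r(ws_r) = \sigma_{r+1}(w), \qquad \sigma_{r+1}(ws_r) = \sigma_r(w) \setminus A,\]
where $A := \{(r+1, c_i) : 0 \leq i \leq m\}$ is the row-$(r+1)$ portion of $\sigma_r(w)$. The first holds because $\sigma_r(ws_r)$ requires $j > w_{r+1}$, a condition no cell in the modified rows $r, r+1$ of $D(ws_r)$ can satisfy (since $(ws_r)_r = w_{r+1}$ and $(ws_r)_{r+1} = w_r$), leaving only the rows-$(>r+1)$ portion, which agrees with $\sigma_{r+1}(w)$. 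The second holds because $\sigma_{r+1}(ws_r)$ restricts to rows $> r+1$, where $D(w) = D(ws_r)$, and its column condition $j > w_r$ matches that of $\sigma_r(w)$. Hence the net change from indices $r, r+1$ is $\rho_d(\sigma_r(w) \setminus A) - \rho_d(\sigma_r(w))$. Because $A$ lies in a single row and any diagonal path uses at most one cell of that row, this difference is in $\{-1, 0\}$, and equals $0$ exactly when some maximum-size diagonal path in $\sigma_r(w)$ avoids row $r+1$. Together with the $+1$ from $\#D$, this is the claimed dichotomy.

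The main obstacle is the careful verification of the two set identities in the $k \in \{r, r+1\}$ case, particularly tracking the new cells $(r, w_r)$ and $(r, c_i)$ of $D(ws_r)$ and confirming they never contribute to $\sigma_r(ws_r)$ and $\sigma_{r+1}(ws_r)$ beyond what is claimed. This is entirely bookkeeping from the definitions of Rothe diagrams and $\sigma_k$, but involves several small cases turning on the inequalities $w_k > w_r$ (for $k < r$) and $w_r < w_{r+1}$.
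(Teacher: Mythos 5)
Your proposal is correct and follows essentially the same route as the paper: both reduce everything to Proposition~\ref{prop:diag1432Move}, observe that $\maxsizediag(\sigma_k)$ is unchanged for $k\neq r,r+1$, identify $\sigma_r(ws_r)$ with $\sigma_{r+1}(w)$ and $\sigma_{r+1}(ws_r)$ with $\sigma_r(w)$ minus its row-$(r+1)$ cells, and conclude via the fact that a diagonal path meets a row at most once. Your write-up is in fact more explicit than the paper's about the $k<r$ bookkeeping (the vertical relocation of the cells $(r+1,c_i)$ and the emptiness of the adjacent rows), but the underlying argument is the same.
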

\begin{proof}
    By Proposition~\ref{prop:diag1432Move}, $\#D(ws_r)=\#D(w)+1$. Further, since $r$ was the first ascent of $w$, $(r,w_r)\in{\sf Dom}(ws_r)$. 
    Further we see 
    \[\maxsizediag(\sigma_k(w))=\maxsizediag(\sigma_k(ws_r)) \text{ for } k\neq r,r+1\] by Proposition~\ref{prop:diag1432Move}. By definition of $r$, \[\maxsizediag(\sigma_{r+1}(w))=\maxsizediag(\sigma_{r}(ws_r)).\] Finally, by Proposition~\ref{prop:diag1432Move}, $\sigma_{r+1}(ws_r)=\sigma_{r}(w)- \bigcup_{i=0}^{\ell} (r,c_i)$. Thus $\maxsizediag(\sigma_{r}(w))=\maxsizediag(\sigma_{r+1}(ws_r))$ if 
    there is a maximal diagonal path in $\sigma_r(w)$ which has no cells in row $r+1$. Otherwise, $\maxsizediag(\sigma_{r}(w))=\maxsizediag(\sigma_{r+1}(ws_r))+1$, so the result follows by the definition of $\diagstat$.
\end{proof}

\begin{proof}[Proof of Theorem~\ref{thm:1432}]
	
	We proceed by induction on $\ell(w_0)-\ell(w)$.  In the base case, $w=w_0$ and the formula is immediate since $\deg(w_0)=\ell(w_0)=\#D(w_0)=\diagstat(w_0)$.
	
	Now pick $w\in S_n$ so that $w\neq w_0$.  Assume the formula holds for all $w'\in S_n$ so that $\ell(w')>\ell(w)$.  Let $r$ be the first ascent of $w$.  Let $R$ denote the set of boxes in $\sigma_r(w)$ lying in row $r+1$.
		By Proposition~\ref{prop:diag1432Move}, one obtains $D(ws_r)$ from $D(w)$ by shifting all cells in $R$ up one row and then placing a new cell in position $(r,w_r)$. 
		
		Consider $T\in \FSVD(w)$.
		We will construct $T'\in\FSVD(ws_r)$ from $T$ by the following:
		 \[T'(x-1,y):=\begin{cases}
    r &\text{if } (x-1,y)=(r,w_r),\\
    T(x,y)-\{r+1\}\cup\{r\} &\text{if } (x,y)\in R, \ r+1\in T(x,y), \text{ and } r\not\in T(x,y),\\
    T(x,y)-\{r+1\}&\text{if } (x,y)\in R, \ r+1\in T(x,y), \text{ and } r\in T(x,y), \\
    T(x,y) &\text{if } (x,y)\in R, \ r+1\not\in T(x,y), \\
    T(x-1,y) &\text{otherwise.}
	\end{cases}\]

	Thus $T'\in \FSVD(ws_r)$ and $\#T'\geq\#T$, giving $\deg(\mathfrak G_w)\leq\deg(\mathfrak G_{ws_r})$.
	We have two cases to check.

	\noindent{\sf Case 1:} Suppose all maximal diagonal paths in $\sigma_r(w)$ have a cell in row $r+1$. 
	
We have 
\begin{align*}
	\diagstat(w)&\leq \deg(\mathfrak G_w) &\text{(by Proposition~\ref{prop:maxtabsize})}\\
		&\leq \deg(\mathfrak G_{ws_r}) & \\
		&=\diagstat(ws_r) &\text{(by inductive hypothesis).}
\end{align*}
	By Lemma~\ref{lemma:maxdiag}, $\diagstat(w)=\diagstat(ws_r)$.  Thus, $\diagstat(w)= \deg(\mathfrak G_w)$.

	\noindent{\sf Case 2:} Suppose there exists a maximal diagonal path in $\sigma_r(w)$ which has no cells in row $r+1$.
	By Lemma~\ref{lemma:maxtabflag}, there exists a maximal tableau $T$ for $w$ so that boxes in $R$ have entries less than $r+1$.  Using the above construction for $T'\in \FSVD(ws_r)$, it follows that $\#T'=\#T+1$.  As a consequence, $\deg(\mathfrak G_w)<\deg(\mathfrak G_{ws_r})$.

	Thus,
	\begin{align*}
		\diagstat(w)&\leq \deg(\mathfrak G_w) &\text{(by Proposition~\ref{prop:maxtabsize})}\\
		&<\deg(\mathfrak G_{ws_r}) & \\
		&=\diagstat(ws_r) &\text{(by inductive hypothesis)}\\
			&=\diagstat(w)+1 &\text{(by Lemma~\ref{lemma:maxdiag}).}
	\end{align*}
Thus $\diagstat(w)= \deg(\mathfrak G_w)$.
\end{proof}

\subsection{Proof of Theorem~\ref{thm:vexDeg}}

 If $v$ is vexillary, we associate to $v$ the following statistic:
\begin{equation}
\label{eqn:dv}
\antidiagstat(v)= \#D(v)+\sum_{i=1}^n \maxsizeantidiag(\tau_i(v)).
\end{equation}
Note that by definition, $\#D(v)=|\lambda(v)|.$
The goal of this section is to prove Theorem~\ref{thm:vexDeg}, i.e., to show
if $v$ is vexillary, then $\deg(\mathfrak G_v)=\antidiagstat(v)$.
We start with some lemmas.

 We follow \cite{KMY} for combinatorial background. The \mydef{maximal corner} $(r,s)$ of $w$ is the position of the right most cell in the last row of $D(w)$. Let $t_{i,j}$ denote the transposition $(i \, j)$.
Define $w_P:=wt_{r,w^{-1}(s)}$. Then $w_P$ is the unique permutation such that
\begin{equation}\label{eq:wPdef}
    D(w_P)=D(w)-\{(r,s)\}.
\end{equation}
Grothendieck polynomials satisfy a recurrence known as \mydef{transition}.  
Recall $t_{i,j}$ denotes the transposition $(i,j)$.  Let $w_P:=wt_{r,w^{-1}(s)}$. Let $i_1<i_2<\cdots<i_k$ be the list of those indices $i<r$ for which $\ell(w_P)+1=\ell(w_Pt_{i,r})$.

 \begin{theorem}[{\cite{Lascoux.Transition}}]
\label{thm:transition} Given $w\in S_n$, with maximal corner $(r,s)$ and $t_{i_j,r}$'s as above,
\[ \mathfrak G_w=\mathfrak G_{w_P}+(x_r-1)(\mathfrak G_{w_P}\star(1-t_{i_1,r})(1-t_{i_2,r})\cdots(1-t_{i_k,r})),\] where
$\mathfrak G_v\star u:=\mathfrak G_{vu}$.
\end{theorem}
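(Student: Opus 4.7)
The transition formula is Lascoux's classical K-theoretic analogue of the Schubert transition, and my plan is to prove it by reverse induction on $\ell(w)$, using the isobaric divided-difference recursion $\mathfrak G_w = \pi_j(\mathfrak G_{ws_j})$ whenever $w_j < w_{j+1}$ (so $\ell(ws_j) = \ell(w) + 1$). For the base case $w = w_0$, the polynomial $\mathfrak G_{w_0} = x_1^{n-1}\cdots x_{n-1}$ is a monomial, the maximal corner is $(n-1,n-1)$, the permutation $w_{0,P}$ differs from $w_0$ by the transposition sorting the last two entries, and the index set $\{i_1,\dots,i_k\}$ can be read off explicitly, so the identity reduces to a direct polynomial verification.

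For the inductive step, assume the formula for $ws_j$ with $\ell(ws_j)>\ell(w)$ and apply $\pi_j$ to both sides. The main tools would be the Leibniz-type identity
\[\pi_j(fg) \;=\; \pi_j(f)\,g \;+\; (1-x_j)\,s_j(f)\,\partial_j(g),\]
together with a case analysis of how the data $\bigl((r,s),\{i_1,\dots,i_k\}\bigr)$ transforms under $w \leftrightarrow ws_j$. Three regimes need separate treatment: the regime where $\{j,j+1\}$ is disjoint from $\{r,w^{-1}(s),i_1,\dots,i_k\}$ (so $\pi_j$ passes through the factor $(x_r-1)$ cleanly); the regime where $j$ or $j+1$ coincides with some $i_\ell$ (which permutes the factors of $\prod(1-t_{i_\ell,r})$ and may change $k$ by one); and the regime where $j$ interacts with row $r$ itself (so the maximal corner of $ws_j$ does not lie in row $r$).

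The main obstacle is precisely this case analysis, specifically showing that the ``correction'' term $(1-x_j)s_j(f)\partial_j(g)$ produced when $\pi_j$ meets $(x_r-1)$ reassembles into exactly the new piece needed to convert the transition identity for $ws_j$ into that for $w$. An alternative, perhaps cleaner route is to invoke a K-theoretic Chevalley/Monk rule for multiplication by $(x_r-1)$: the right-hand side of the transition formula would then be recognizable as the Chevalley expansion of $(x_r-1)\mathfrak G_{w_P}$ restricted to chains of Bruhat covers through row $r$, with the term $\mathfrak G_w$ arising from the unique cover $w_P \to w = w_P\,t_{r,w^{-1}(s)}$ and all other contributions cancelling via the inclusion-exclusion encoded by the product $\prod(1-t_{i_\ell,r})$. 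In either approach, the delicate combinatorics of Bruhat covers in $S_n$ is what does the real work.
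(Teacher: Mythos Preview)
The paper does not prove this statement at all; Theorem~\ref{thm:transition} is quoted from Lascoux's paper \cite{Lascoux.Transition} and used as a black box, so there is no proof here to compare against.

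That said, your primary strategy has genuine gaps. First, the base case is miscomputed: for $w_0\in S_n$ the Rothe diagram is the staircase $\{(i,j):i+j\leq n\}$, so the last nonempty row is row $n-1$ and contains only $(n-1,1)$; the maximal corner is $(n-1,1)$, not $(n-1,n-1)$. More seriously, the inductive scheme via $\pi_j$ does not interact well with the transition data. The maximal corner $(r,s)$ and the index set $\{i_1,\ldots,i_k\}$ are global invariants of $w$; when you pass from $w$ to $ws_j$ by choosing an ascent $j$, the maximal corner of $ws_j$ may sit in an entirely different row (indeed it often moves to a larger row), and its transition data bear no simple relation to that of $w$. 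Applying $\pi_j$ to the transition identity for $ws_j$ therefore produces an expression organized around the wrong corner, and you have not explained any mechanism by which that expression rearranges into the one for $w$. Your three ``regimes'' presuppose that $(r,s)$ is stable under $w\mapsto ws_j$, which it is not in general.

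Your alternative route, deducing transition from a K-theoretic Monk/Chevalley formula for multiplication by $x_r$ (or $x_r-1$), is the standard one and is essentially how Lascoux argues; the product $\prod_\ell(1-t_{i_\ell,r})$ is exactly the inclusion--exclusion that isolates the Bruhat covers $w_P\lessdot w_P t_{i,r}$ with $i<r$, and the distinguished cover $w_P\lessdot w$ contributes the leading term. If you want a self-contained proof, that is the approach to flesh out.
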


When $v$ is vexillary, there is at most one index $i<r$ for which $\ell(v_P)+1=\ell(v_Pt_{i,r})$.  When such an index exists, we define $v_C=v_Pt_{i,r}$.  In this case, Theorem~\ref{thm:transition} specializes to 
\begin{align}\label{eq:vexTransAcc}
\begin{split}
\mathfrak G_v&=\mathfrak G_{v_P}+(x_r-1)(\mathfrak G_{v_P}\star(1-t_{i,r})) \\
&=\mathfrak G_{v_P}+(x_r-1)(\mathfrak G_{v_P}-\mathfrak G_{v_C}) \\
&=x_r\mathfrak G_{v_P}+(1-x_r)\mathfrak G_{v_C}.
\end{split}
\end{align}
If no such index exists, then necessarily $(r,s)\in {\sf Dom}(v)$ and we have
\begin{align}\label{eq:vexTransNoAcc}
\begin{split}
\mathfrak G_v&=\mathfrak G_{v_P}+(x_r-1)(\mathfrak G_{v_P})\\
&=x_r\mathfrak G_{v_P}.
\end{split}
\end{align}

\begin{lemma}
\label{lemma:domdeg}
Fix any permutation $w$ and suppose the maximal corner $(r,s)\in {\sf Dom}(w)$. Then $\deg(\mathfrak G_w)=\deg(\mathfrak G_{w_P})+1.$
\end{lemma}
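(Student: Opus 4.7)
The plan is to reduce to the transition formula (Theorem~\ref{thm:transition}) and show that the hypothesis $(r,s) \in \text{Dom}(w)$ forces the accumulator list $i_1 < \cdots < i_k$ to be empty, collapsing the transition formula to $\mathfrak G_w = x_r \mathfrak G_{w_P}$.

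First, I would unpack what $(r,s) \in \text{Dom}(w)$ means combinatorially. Since the dominant component is the connected component of $D(w)$ containing $(1,1)$, having $(r,s)$ in this component forces the entire rectangle $[1,r] \times [1,s]$ to lie in $D(w)$. In particular, $(i,s) \in D(w)$ for every $i \leq r$, which by the defining inequalities $w_i > s$ and $w^{-1}(s) > i$ tells me that $w(i) > s$ for all $i \leq r$. Also, since $w^{-1}(s) > r$, there is some $k > r$ with $w(k) = s$.

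Next, I would describe $w_P = w t_{r, w^{-1}(s)}$ explicitly: it swaps $w(r)$ (a value $>s$) with $w(k) = s$. So $w_P(r) = s$, while $w_P(i) = w(i) > s = w_P(r)$ for every $i < r$. For an index $i<r$ to appear in the accumulator list, we would need $\ell(w_Pt_{i,r}) = \ell(w_P) + 1$, which requires $w_P(i) < w_P(r)$ — contradicting the inequality just derived. Hence the accumulator list is empty, and Theorem~\ref{thm:transition} reduces to
\[
\mathfrak G_w = \mathfrak G_{w_P} + (x_r - 1)\mathfrak G_{w_P} = x_r\, \mathfrak G_{w_P}.
\]

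Finally, since Grothendieck polynomials are nonzero, multiplication by $x_r$ increases the total degree by exactly one, so $\deg(\mathfrak G_w) = \deg(\mathfrak G_{w_P}) + 1$. I do not anticipate any real obstacle here: the entire argument is a direct unpacking of the dominant-component hypothesis followed by an application of Theorem~\ref{thm:transition}; the only thing to be careful about is that the paper's earlier specialization of transition to the vexillary case (Equations~\eqref{eq:vexTransAcc}--\eqref{eq:vexTransNoAcc}) used vexillarity only to guarantee $k \leq 1$, whereas here $(r,s) \in \text{Dom}(w)$ gives the stronger conclusion $k=0$ for arbitrary $w$.
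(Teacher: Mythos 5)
Your proof is correct and follows essentially the same route as the paper, which deduces the lemma directly from the degenerate case of the transition formula, $\mathfrak G_w = x_r\,\mathfrak G_{w_P}$. You additionally supply the detail the paper leaves implicit — that $(r,s)\in{\sf Dom}(w)$ forces $w_P(i)=w(i)>s=w_P(r)$ for all $i<r$, so the accumulator list is empty for an \emph{arbitrary} permutation, not just a vexillary one — which is exactly the justification needed since the lemma is stated for all $w$ while Equation~\eqref{eq:vexTransNoAcc} was derived in the vexillary discussion.
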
	
\begin{proof}
This is an immediate consequence of Equation \eqref{eq:vexTransNoAcc} since multiplying any nonzero polynomial by $x_r$ increases the degree by 1.
\end{proof}

 Given a permutation $w$, the cell $(r,s)\in D(w)$ is called \mydef{accessible} if 
\begin{enumerate}
	\item $(r,s)\not\in {\sf Dom}(w)$ and
	\item there are no other cells which occur weakly southeast of $(r,s)$ in $D(w)$.
\end{enumerate}
The maximal corner is an accessible box if and only if there exists $i<r$ such that $\ell(w_P)+1=\ell(w_Pt_{i,r})$. 
For vexillary permutations, there can be at most one such $i$, so we define $v_C=v_Pt_{i,r}$ in this case.
We may construct $v_C$ graphically as follows. 
Consider the cells in $D(v)$ which sit weakly northwest of the accessible box in its connected component.  Move each of these diagonally one step in the northwest direction.  This new diagram is the $D(v_C)$.

\begin{lemma}
	\label{lemma:accdegree}
	Fix $v$ vexillary, where the maximal corner $(r,s)$ is an accessible box.  Then $\deg(\mathfrak G_v)=\max\{\deg(\mathfrak G_{v_P}),\deg(\mathfrak G_{v_C})\}+1$.
\end{lemma}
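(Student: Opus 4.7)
The plan is to apply the vexillary transition identity \eqref{eq:vexTransAcc}, namely $\mathfrak G_v = x_r \mathfrak G_{v_P} + (1-x_r)\mathfrak G_{v_C}$, and then perform a careful degree analysis. Write $d_P := \deg(\mathfrak G_{v_P})$ and $d_C := \deg(\mathfrak G_{v_C})$. The upper bound
\[
\deg(\mathfrak G_v) \leq \max\{d_P, d_C\} + 1
\]
is immediate from the identity, since $\deg(x_r \mathfrak G_{v_P}) = d_P + 1$ and $\deg((1-x_r)\mathfrak G_{v_C}) \leq d_C + 1$. The substance of the lemma is therefore the matching lower bound.

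For the lower bound, I would split on whether $d_P = d_C$. When $d_P \neq d_C$, suppose without loss of generality that $d_P > d_C$. Then the degree-$(d_P + 1)$ homogeneous component of $\mathfrak G_v$ is forced to equal $x_r \cdot [\mathfrak G_{v_P}]_{d_P}$, since the term $(1-x_r)\mathfrak G_{v_C}$ lives in degrees at most $d_C + 1 < d_P + 1$. The right-hand side is nonzero by definition of $d_P$, so $\deg(\mathfrak G_v) = d_P + 1 = \max\{d_P, d_C\} + 1$, as required.

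The main obstacle is the remaining case, $d_P = d_C =: d$. Here the degree-$(d+1)$ component of $\mathfrak G_v$ equals $x_r \bigl([\mathfrak G_{v_P}]_d - [\mathfrak G_{v_C}]_d\bigr)$, and we must produce a \emph{non-cancellation} argument ruling out $[\mathfrak G_{v_P}]_d = [\mathfrak G_{v_C}]_d$. I would approach this through the tableau formula \eqref{equation:gVex}: the maximal set-valued tableau $U_{v_P}$ constructed in Section~\ref{section:tableau} contributes a distinguished top-degree monomial whose exponent records the row flagging of $\lambda(v_P)$. Because $(r,s)$ is accessible, the diagonal northwest shift that produces $D(v_C)$ from $D(v_P)$ modifies both the shape $\lambda$ and the flag $\phi$ in a controlled way near the corner, so the analogous distinguished monomial for $v_C$ has different support. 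Comparing the two monomials will then yield $[\mathfrak G_{v_P}]_d \neq [\mathfrak G_{v_C}]_d$ and complete the argument.
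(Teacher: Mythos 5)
Your upper bound and the case $\deg(\mathfrak G_{v_P}) \neq \deg(\mathfrak G_{v_C})$ are fine, but the heart of the lemma is precisely the case $d_P = d_C = d$ that you leave unfinished, and there the proposal has a genuine gap. You correctly reduce to showing $[\mathfrak G_{v_P}]_d - [\mathfrak G_{v_C}]_d \neq 0$, but your proposed route --- comparing the supports of distinguished top-degree monomials coming from the maximal tableaux for $v_P$ and $v_C$ --- is only asserted, not carried out ("Comparing the two monomials will then yield\dots"). Making it precise would require tracking how both the shape $\lambda$ and the flag $\phi$ change under the diagonal northwest shift producing $D(v_C)$ from $D(v_P)$, which is essentially a second nontrivial argument buried inside this lemma; and in any case distinctness of two particular monomials' supports is not obviously enough to conclude the whole homogeneous components differ unless you also show one monomial genuinely fails to appear in the other polynomial. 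As written, the proof is incomplete.

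The paper closes this case with a one-line sign argument that your proposal misses: the coefficients of $\mathfrak G_w$ in degree $d$ all have sign $(-1)^{d-\ell(w)}$ (the alternation property of Grothendieck polynomials). Since $\ell(v_C) = \ell(v_P) + 1$, the components $[\mathfrak G_{v_P}]_d$ and $[\mathfrak G_{v_C}]_d$ carry \emph{opposite} signs in every degree $d$, so $x_r\bigl([\mathfrak G_{v_P}]_d - [\mathfrak G_{v_C}]_d\bigr)$ combines two pieces with the same sign and cannot vanish when either is nonzero. Equivalently, the identity $\mathfrak G_v = x_r\mathfrak G_{v_P} + (1-x_r)\mathfrak G_{v_C}$ is cancellation-free, and the lower bound follows at once in all cases, with no tableau analysis needed. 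If you want to keep your case structure, replace the support-comparison step with this observation.
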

\begin{proof}
	The monomials of Grothendieck polynomials alternate in sign based on degree. As such, Equation \eqref{eq:vexTransAcc} is cancellation free.  Therefore, the top degree monomials in $\mathfrak G_v$ must come from $x_r\mathfrak G_{v_P}$ or $x_r\mathfrak G_{v_C}$.
\end{proof}

\begin{lemma}
\label{lemma:dommaximalcorner}
For $v$ vexillary, 
 if the maximal corner $(r,s)\in {\sf Dom}(v)$, then $\antidiagstat(v)=\antidiagstat(v_P)+1$.
\end{lemma}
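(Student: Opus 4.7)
The plan is to show $\tau_k(v) = \tau_k(v_P)$ as subsets of $\lambda(v)$ for every $k \geq 1$, so that $\sum_k \maxsizeantidiag(\tau_k(v))$ is unchanged between $v$ and $v_P$; combined with $\#D(v) = \#D(v_P) + 1$ from \eqref{eq:wPdef}, this yields the lemma.

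The first step is to observe that $r_v(r,s) = 0$. Indeed, if some $k \leq r$ had $v_k \leq s$, then the dot at $(k, v_k)$ would strike out $(r,s)$, contradicting $(r,s) \in {\sf Dom}(v) \subseteq D(v)$. The same calculation, namely $\#\{k \leq i : v_k \leq j\} = 0$, applies to every cell $(i,j) \in D(v)$ lying weakly northwest of $(r,s)$ on the diagonal $\{j - i = s - r\}$. Hence $r_v$ vanishes on the entire diagonal through $(r,s)$ in $D(v)$, and so the corresponding diagonal of $\lambda(v)$ is filled entirely with zeros by $F_v$.

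The second step compares the shapes. Since $(r,s)$ is dominant and is the maximal corner, $c_r = s$ and $\code(v_P)$ differs from $\code(v)$ only at position $r$, where the entry drops from $s$ to $s-1$; thus $\lambda(v_P) = \lambda(v) \setminus \{c\}$ for a single removable cell $c$. Since $v$ and $v_P$ are both vexillary, the number of cells on each diagonal of $D(\cdot)$ matches the number on the corresponding diagonal of $\lambda(\cdot)$, so $c$ must lie on the diagonal $s-r$. By the first step, $F_v(c) = 0$.

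Finally, because $(r,s)$ is the rightmost cell of its row in $D(v)$, the rank functions $r_v$ and $r_{v_P}$ (which can disagree only within the rectangle $[r,\, v^{-1}(s)-1] \times [s,\, v(r)-1]$) coincide on every cell of $D(v_P)$, so $F_{v_P}$ agrees with $F_v$ on the common cells $\lambda(v_P) \subseteq \lambda(v)$. Since $c$ carries $F_v$-value $0$, it lies in no $\tau_k(v)$ for $k \geq 1$, whence $\tau_k(v_P) = \tau_k(v) \cap \lambda(v_P) = \tau_k(v)$ and the antidiagonal sums coincide. The main obstacle is the bookkeeping around the diagonal correspondence of $D$-cells with $\lambda$-cells and the identity $F_{v_P} = F_v$ on $\lambda(v_P)$; the dominance hypothesis on $(r,s)$ is precisely what forces the removed cell to sit on an all-zero diagonal of $F_v$, making this bookkeeping routine.
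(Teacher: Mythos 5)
Your proposal is correct and follows the same route as the paper's proof: both arguments show that $\lambda(v_P)$ is $\lambda(v)$ minus one cell whose $F_v$-label is $0$ (because $(r,s)\in{\sf Dom}(v)$ forces rank $0$ along its diagonal), that $F_{v_P}$ agrees with $F_v$ elsewhere, and hence that $\tau_k(v)=\tau_k(v_P)$ for all $k\geq 1$, so only the $\#D(\cdot)$ term changes. You have simply made explicit the bookkeeping (the code dropping by one in row $r$, the diagonal correspondence, and the locus where $r_v$ and $r_{v_P}$ can differ) that the paper asserts without detail.
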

\begin{proof}
By Equation~(\ref{eq:wPdef}), $\lambda(v_P)$ is obtained by removing the corresponding (boundary) cell  from $\lambda(v)$.  The label of this cell in $F_v$ is zero since $(r,s)\in {\sf Dom}(v)$.  At all other positions, $F_v$ matches $F_{v_P}$.  As such, $\tau_i(v)=\tau_i(v_P)$ for all $i>0$.  Therefore,
\begin{align*}
d(v)&= |\lambda(v)|+\sum_{i=1}^n \maxsizeantidiag(\tau_i(v))\\
&= |\lambda(v_P)|+1+\sum_{i=1}^n\maxsizeantidiag(\tau_i(v_P))\\
&=\antidiagstat(v_P)+1. \qedhere
\end{align*}
\end{proof}

\begin{lemma}
\label{lemma:accmaximal corner}
Fix $v$ vexillary and suppose the maximal corner $(r,s)$ is an accessible box.
\begin{enumerate}
\item  $\antidiagstat(v)\geq \antidiagstat(v_C)+1 $.
\item If $(r,s)$ is the only cell in its row within its connected component in $D(v)$, then $\antidiagstat(v)=\antidiagstat(v_C)+1$.
\item $\antidiagstat(v)\geq \antidiagstat(v_P)+1$.

\item If $(r,s)$ is not the only cell in its row within its connected component in $D(v)$, then $\antidiagstat(v)=\antidiagstat(v_P)+1$.
\end{enumerate}
\end{lemma}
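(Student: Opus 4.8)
\emph{Plan.} All four claims are statements about the single statistic $\antidiagstat$, so the plan is to track how the pair $(\lambda(v),F_v)$, and hence the level sets $\tau_i(v)$, change under the two operations $v\mapsto v_P$ (delete the accessible corner) and $v\mapsto v_C$ (diagonally slide the northwest part of the connected component of $(r,s)$), and then argue directly about maximal antidiagonals. For the first operation: since $(r,s)$ is accessible no cell of $D(v)$ lies weakly southeast of it, and comparing the matrices of $v$ and $v_P=v\,t_{r,\,v^{-1}(s)}$ shows $r_{v_P}=r_v$ on every cell of $D(v_P)=D(v)\setminus\{(r,s)\}$. As a partition is determined by its diagonal lengths (and diagonal-length dominance gives containment), $\lambda(v_P)$ is $\lambda(v)$ with the southeast-most cell $\kappa$ of diagonal $s-r$ removed, this is a corner, $F_{v_P}=F_v|_{\lambda(v)\setminus\{\kappa\}}$, and we set $p:=F_v(\kappa)=r_v(r,s)$. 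For the second: the slide preserves all diagonal lengths, so $\lambda(v_C)=\lambda(v)$, and --- using accessibility, which makes the northwest part of the component meet each diagonal at most once --- one checks $F_{v_C}\le F_v$ pointwise, with $F_{v_C}$ obtained by decreasing by $1$ the labels of a set $R$ of cells near the southeast boundary of $\lambda(v)$; $R$ meets diagonal $s-r$ at its southeast end, and $\max_R F_v=p$. Consequently $\tau_i(v_P)=\tau_i(v)\setminus\{\kappa\}$ for $i\le p$ and $\tau_i(v_P)=\tau_i(v)$ for $i>p$, while $\tau_i(v_C)=\tau_i(v)\setminus R_i$ with $R_i:=R\cap F_v^{-1}(i)$.

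\emph{The inequalities (1) and (3).} From $|\lambda(v_P)|=|\lambda(v)|-1$ and $\tau_i(v_P)\subseteq\tau_i(v)$,
\[
\antidiagstat(v)-\antidiagstat(v_P)\;=\;1+\sum_{i=1}^{p}\bigl(\maxsizeantidiag(\tau_i(v))-\maxsizeantidiag(\tau_i(v)\setminus\{\kappa\})\bigr)\;\ge\;1,
\]
because deleting one cell cannot lengthen an antidiagonal; this is (3). From $|\lambda(v_C)|=|\lambda(v)|$ and $\tau_i(v_C)\subseteq\tau_i(v)$ we get $\antidiagstat(v)-\antidiagstat(v_C)=\sum_{i\ge 1}\bigl(\maxsizeantidiag(\tau_i(v))-\maxsizeantidiag(\tau_i(v)\setminus R_i)\bigr)\ge 0$, and to upgrade this to $\ge 1$ I would work at level $i=p$: there $R_p$ contains the southeast end of diagonal $s-r$ inside $\tau_p(v)$, and I would show every maximal antidiagonal of $\tau_p(v)$ must meet $R_p$, so the deletion is strict. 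This gives (1).

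\emph{The equalities (2) and (4).} Each asserts the corresponding inequality is tight, i.e. the relevant deletion shortens no maximal antidiagonal at any level. For (4): if $(r,s)$ is not alone in row $r$ of its component then, using that $v$ is vexillary, some cell of $D(v)$ on diagonal $s-r-1$ has rank value $\ge p$, so the staircase cell $\kappa'$ of $\lambda(v)$ lying just left of or just below $\kappa$ satisfies $F_v(\kappa')\ge p$; since $\kappa$ is a corner, any maximal antidiagonal of $\tau_i(v)$ through $\kappa$ reroutes through $\kappa'$ with the same length, so $\maxsizeantidiag(\tau_i(v))=\maxsizeantidiag(\tau_i(v)\setminus\{\kappa\})$ for all $i$ and (4) follows from the display above. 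For (2): when $(r,s)$ is alone in its row the set $R$ degenerates to a single antidiagonal for which no such rerouting is available, and a direct check gives $\maxsizeantidiag(\tau_i(v)\setminus R_i)=\maxsizeantidiag(\tau_i(v))$ for $i<p$ and $\maxsizeantidiag(\tau_p(v)\setminus R_p)=\maxsizeantidiag(\tau_p(v))-1$, whence $\antidiagstat(v)=\antidiagstat(v_C)+1$.

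The inequalities are soft; the real content is in the first paragraph (pinning down $R$, equivalently understanding the connected component of an accessible corner of a vexillary permutation and how its rank data transports to $\lambda(v)$) and in the rerouting analysis, which is exactly where the hypotheses on whether $(r,s)$ is alone in its row enter. I expect the rerouting argument for (4) --- controlling the maximal antidiagonals of $\tau_i(v)$ at all levels $i$ simultaneously --- to be the main obstacle.
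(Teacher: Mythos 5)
Your overall strategy coincides with the paper's: track how $(\lambda(v),F_v)$, and hence the level sets $\tau_i(v)$, change under $v\mapsto v_P$ and $v\mapsto v_C$, then compare maximal antidiagonals. Parts (3) and (4) are essentially complete and match the paper's argument; for (4) the paper reroutes specifically through $(a,b-1)$, the cell just \emph{left} of the corner, using that $(r,s-1)\in D(v)$ forces that cell to carry the same label --- your ``just below'' alternative would in general break the antidiagonal property, but your ``just left'' branch is the correct one.

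The genuine gap is in the $v_C$ analysis, i.e.\ parts (1) and (2). You leave the structure of the changed set $R$ vague (``a set of cells near the southeast boundary with $\max_R F_v=p$'') and then defer exactly the two claims that carry the content: that every maximal antidiagonal of $\tau_p(v)$ meets $R_p$ (for (1)), and the unperformed ``direct check'' (for (2)). The paper supplies the missing structure: the cells whose labels change under $v\mapsto v_C$ are precisely the cells of $\lambda(v)$ weakly northwest of $(a,b)$ carrying the label $k=F_v(a,b)$, so only the single level $\tau_k$ changes (your sum over $i$ collapses to one term), the removed set is a rectangle, and --- since $(a,b)$ lies in it --- removing that rectangle deletes the entire bottom row of $\tau_k(v)$. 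Strictness in (1) then follows because any antidiagonal avoiding the bottom row of $\tau_k(v)$ can be lengthened by a cell of that row, and (2) follows because in that case the bottom row of $\tau_k(v)$ is a single box whose column is also removed. Without this rectangle/bottom-row analysis, your assertion that every maximal antidiagonal meets $R_p$ is merely a restatement of the strict inequality you are trying to prove. Note also that you flag the rerouting in (4) as the main obstacle; in the paper that is the easy half, and the substantive work is precisely the description of $F_{v_C}$ that you left open.
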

\begin{proof}
Throughout, let $(a,b)$ denote the position of the box in $\lambda(v)$ which corresponds to $(r,s)$. Write $k=F_v(a,b)$.  By assumption since $(r,s)\not\in{\sf Dom}(v)$, $k\geq 1$.

\noindent (1) To get $F_{v_C}$ from $F_v$, take all labels weakly northwest of $(a,b)$ with label $k$ and decrease the value of these labels by $1$.  As such, $\tau_i(v)\supseteq \tau_i(v_C)$ for all $i$.  Furthermore, since $(r,s)$ has label $k$, $\tau_k(v)\supsetneq \tau_k(v_C)$.  
In particular,  $\tau_k(v_C)$ is obtained from $\tau_k(v)$ by removing a rectangular strip.  Since this strip contains $(r,s)$, removing this rectangle removes the last row of $\tau_k(v)$ entirely (and anything north of this row) by the definition of $(r,s)$.
Therefore, any antidiagonal path in $\tau_k(v_C)$ can be completed to a larger antidiagonal path in $\tau_k(v)$ by adding a box row $r$.  As such, $\maxsizeantidiag(\tau_k(v))>\maxsizeantidiag(\tau_k(v_C))$ and so
\begin{align*}
\antidiagstat(v)&=|\lambda(v)|+\sum_{i=1}^n \maxsizeantidiag(\tau_i(v))\\
&> |\lambda(v_C)|+\sum_{i=1}^n \maxsizeantidiag(\tau_i(v_C))\\
&=\antidiagstat(v_C).
\end{align*}
Since these are all integers, $\antidiagstat(v)\geq \antidiagstat(v_C)+1$.

\noindent (2) Since there is a single box in the same row as $(a,b)$ in $\tau_k(v)$ and this box is not in $\tau_k(v_C)$ (nor any boxes in its same column) we claim $\maxsizeantidiag(\tau_k(v))=\maxsizeantidiag(\tau_k(v_C))+1$.  For all other $i$, we have $\tau_i(v)=\tau_i(v_C)$ and so $\maxsizeantidiag(\tau_i(v))=\maxsizeantidiag(\tau_i(v_C))$.
  Therefore, $\antidiagstat(v)=\antidiagstat(v_C)+1$.

\noindent (3)  Using Equation~(\ref{eq:wPdef}), $F_{v_P}(i,j)=F_v(i,j)$ for all $(i,j)\in \lambda(v_P)$.  As such, 
\begin{equation}
\label{eqn:pivotequalities}
\tau_i(v_P)=
\begin{cases}
 \tau_i(v) &\text{if } i<k\\
\tau_i(v)-\{(a,b)\} &\text{otherwise}.
\end{cases}
\end{equation}
In particular, $\tau_i(v)\supseteq \tau_i(v_P)$ for all $i$.
 Therefore, \[\sum_{i=1}^n \maxsizeantidiag(\tau_i(v)) \geq \sum_{i=1}^n\maxsizeantidiag(\tau_i(v_P)).\]
Then
\begin{align*}
\antidiagstat(v)&=|\lambda(v)|+\sum_{i=1}^n \maxsizeantidiag(\tau_i(v))\\
&\geq |\lambda(v_P)|+1+\sum_{i=1}^n \maxsizeantidiag(\tau_i(v_P))\\
&=\antidiagstat(v_P)+1.
\end{align*}

\noindent (4) By assumption, $(r,s-1)\in D(v)$.  As such, if $(a,b)\in \tau_i(v)$ then $(a,b-1)\in \tau_i(v)$ as well.  Fix an antidiagonal path of cells in $\tau_i(v)$.  If it does not use $(a,b)$, then it is also an antidiagonal path of cells in $\tau_i(v_P)$.  If it does use $(a,b)$, then we can construct a new antidiagonal path of cells of the same size by replacing $(a,b)$ with $(a,b-1)$.  By (\ref{eqn:pivotequalities}), we see that this new antidiagonal path is also in $\tau_i(v_P)$.  As such, $\maxsizeantidiag(\tau_i(v))=\maxsizeantidiag(\tau_i(v))$.
Then we conclude $\antidiagstat(v)=\antidiagstat(v_P)+1$.
\end{proof}

\begin{proof}
Fix $v$ vexillary.  The statement is trivial for the identity, so assume $\ell(v)\geq 1$.  We will proceed by induction on the position of the maximal corner $(r,s)$ (ordering cells of the grid lexicographically).   In the base case, $v=21$, we confirm $\deg(\mathfrak G_v)=1=\antidiagstat(v)$.

  Assume the formula holds for any vexillary $v'$ whose maximal corner occurs before $(r,s)$, i.e., $\deg(\mathfrak G_{v'})=\antidiagstat(v')$.

\noindent Case 1: $(r,s)\in{\sf Dom}(v)$.
By Equation~(\ref{eq:wPdef}), the maximal corner of $v_P$ occurs before $(r,s)$.  Furthermore, $v_P$ is vexillary.  As such, 
\begin{align*}
\deg(\mathfrak G_v)&=\deg(\mathfrak G_{v_P})+1 &\text{(by Lemma~\ref{lemma:domdeg})}\\
&=\antidiagstat(v_P)+1 &\text{(by induction hypothesis)}\\
&=\antidiagstat(v) &\text{(by Lemma~\ref{lemma:dommaximalcorner})}.
\end{align*}

\noindent Case 2: $(r,s)\not\in{\sf Dom}(v)$ (i.e., it is an accessible box).

Both $v_P$ and $v_C$ are vexillary and their maximal corners (when defined) occur before $(r,s)$.
We know by Lemma~\ref{lemma:accdegree} and the induction hypothesis that 
\begin{equation}
\label{eqn:maxeq}
\deg(\mathfrak G_v)=\max\{\deg(\mathfrak G_{v_P}),\deg(\mathfrak G_{v_C})\}+1=\max\{\antidiagstat(v_P),\antidiagstat(v_C)\}+1.
\end{equation}
In particular, $1+\antidiagstat(v_P)\leq \deg(\mathfrak G_v)$ and $1+\antidiagstat(v_C)\leq \deg(\mathfrak G_v)$. Applying Lemma~\ref{lemma:accmaximal corner} to (\ref{eqn:maxeq}), we see that $\deg(\mathfrak G_v)\leq \antidiagstat(v)$.  
By parts (2) and (4) of Lemma~\ref{lemma:accmaximal corner}, since $(r,s)$ is an accessible box, $1+\antidiagstat(v_P)=\antidiagstat(v)$ or $1+\antidiagstat(v_C)=\antidiagstat(v)$. Then $\antidiagstat(v)=\deg(\mathfrak G_v)$.
\end{proof}

\section{Castelnuovo-Mumford regularity of Schubert determinantal ideals}
\label{section:CM}
We begin this section by recalling the connection between the Castelnuovo-Mumford regularity in the Cohen-Macaulay setting and the degree of a K-polynomial (Subsection~\ref{sect:CMreg}). We then  provide some background on Schubert determinantal ideals, explain how to  express Castelnuovo-Mumford regularity of Schubert determinantal ideals in terms of degrees of Grothendieck polynomials, and prove Theorems~\ref{thm:1stmainTheorem} and \ref{thm:2ndmainTheorem} (Subsection~\ref{sect:SchubBackground}).

\subsection{Castelnuovo-Mumford regularity and connections to K-polynomials}\label{sect:CMreg}

Let $S = \Bbbk[x_1,\ldots, x_n]$ be a polynomial ring over the field $\Bbbk$, and assume that $S$ is positively $\mathbb{Z}^d$-graded so that ${\rm deg}_{{\mathbf 0}}S = \Bbbk$. 
Let $M$ be a finitely generated graded $S$-module. The \mydef{multigraded Hilbert series} of $M$ is a formal power series in indeterminates $t_1,\dots, t_d$:
\[
H(M; \mathbf{t}) = \sum_{\mathbf{a}\in \mathbb{Z}^d}\text{dim}_{\Bbbk}(M_{\mathbf{a}})\mathbf{t}^{\mathbf{a}}
=\frac{K(M;\mathbf{t})}{\prod_{i=1}^n(1-\mathbf{t}^{\mathbf{a_i}})},~~~\text{deg}(x_i) = \mathbf{a_i}.
\]
The numerator $K(M;\bf{t})\in \Bbbk[\bf{t}^{\pm 1}]$ is called the $\mathbf{K}$\mydef{-polynomial} of $M$. 
When $S$ has the standard grading, that is $\text{deg}(x_i) = 1$, the K-polynomial is a Laurent polynomial in a single indeterminate $t$.

For the rest of this subsection, assume that $S$ has the standard grading, and let $I\subseteq S$ be a homogeneous ideal. 
There is a minimal free resolution 
\[
 0 \rightarrow \bigoplus_jS(-j)^{\beta_{l,j}(S/I)}\rightarrow \bigoplus_jS(-j)^{\beta_{l-1,j}(S/I)}\rightarrow \cdots \rightarrow \bigoplus_jS(-j)^{\beta_{0,j}(S/I)} \rightarrow S/I \rightarrow 0
\]
where $l\leq n$ and $S(-j)$ is the free $S$-module obtained by shifting the degrees of $S$ by $j$. 
The \mydef{Castelnuovo-Mumford regularity} of $S/I$, denoted $\reg(S/I)$, is defined as
\[
\reg(S/I):=\max\{j-i \,:  \, \beta_{i,j}(S/I)\neq 0\}.
\]
When $S/I$ is Cohen-Macaulay, we have that  
\begin{equation}\label{eq:mainRegEquation}
\reg(S/I) = \text{deg }K(S/I;t) - \text{ht}_S I,
\end{equation}
where $\text{ht}_S I$ denotes the height of the ideal $I$. 
See, for example, \cite[Lemma 2.5]{Benedetti.Varbaro} for justification of this formula. 
In this paper, we use Equation~\eqref{eq:mainRegEquation} to compute Castelnuovo-Mumford regularity of coordinate rings of certain matrix Schubert varieties and certain standard-graded Kazhdan-Lusztig varieties.

\subsection{Regularity of Schubert determinantal ideals and proofs of Theorems~\ref{thm:1stmainTheorem} and \ref{thm:2ndmainTheorem}}\label{sect:SchubBackground}

We begin by recalling basic facts about Schubert determinantal ideals. Fix an $n\times n$ permutation matrix $w$.  Let $X = (x_{ij})$ be an $n\times n$ matrix of distinct indeterminates, and let $X_{[p],[q]}$ denote the matrix formed by intersecting the first $p$ rows of $X$ and the first $q$ columns of $X$. Let $\Bbbk[{\bf x}] := \Bbbk[x_{ij}:1\leq i,j\leq n ]$. The \mydef{Schubert determinantal ideal} $I_w\subseteq \Bbbk[{\bf x}]$ is the ideal  
\[I_w = \langle \text{minors of size } r_w(i,j)+1 \text{ in } X_{[i],[j]}\,:  \, (i,j)\in \Ess(w) \rangle.\]
By \cite{Fulton.Flags}, $I_w$ is a prime ideal, and $\Bbbk[{\bf x}]/I_w$ is Cohen-Macaulay. Recall that $\Bbbk[{\bf x}]/I_w$ is the coordinate ring of the matrix Schubert variety $\overline{B_- w B_+}\subseteq \text{Mat}_{\Bbbk}(n,n)$ where $B_-\leq \GL_n(\Bbbk)$ is the Borel subgroup of invertible lower triangular matrices, $B_+\leq \GL_n(\Bbbk)$ is the Borel subgroup of invertible upper triangular matrices, and $\text{Mat}_{\Bbbk }(n,n)$ is the affine space of $n\times n$ matrices with entries in $\Bbbk$. Schubert determinantal ideals are homogeneous with respect to the standard grading of $\Bbbk[{\bf x}]$.

\begin{proof}[Proof of Theorems \ref{thm:1stmainTheorem} and \ref{thm:2ndmainTheorem}]
We first recall how to express the regularity of  $\Bbbk[{\bf x}]/I_w$ in terms of the degree of a Grothendieck polynomial. This was originally discussed in \cite{RRRSW}. By \cite{Fulton.Flags}, we have $\text{ht}_{\Bbbk[{\bf x}]} I_w=\#D(w)$. It then follows by \eqref{eq:mainRegEquation} that 
\[
\reg(\Bbbk[{\bf x}]/I_w) 
 = \text{deg } K(\Bbbk[{\bf x}]/I_w) - \#D(w).
\]
By \cite[Theorem~2.1]{Buc02} (see also \cite[Theorem~A]{KM}), $K(S/I_w;t)=\mathfrak G_w(1-t,\ldots,1-t)$. Furthermore,\[\text{deg }\mathfrak G_w(1-t,\ldots,1-t) = \text{deg }\mathfrak G_w(x_1,\ldots,x_n)\] 
since the the coefficients in the homogeneous components $\mathfrak G_w(x_1,\ldots,x_n)$ all have the same sign (see, for example, \cite{KM}). Thus, 
\begin{equation}\label{eq:schubReg}
\reg(\Bbbk[{\bf x}]/I_w)  = \text{deg } \mathfrak{G}_w(x_1,\dots, x_n) - \#D(w).
\end{equation}
Theorems~\ref{thm:1stmainTheorem} and \ref{thm:2ndmainTheorem} are now immediate from Theorems~\ref{thm:1432} and \ref{thm:vexDeg}.
\end{proof}

\section{Regularity of homogeneous Kazhdan-Lusztig ideals}\label{sec:vexGr}

In this section, we recall the basics of \emph{Kazhdan-Lusztig ideals} $J_{v,w}$ (Section \ref{sec:KLBackground}) and provide preliminary combinatorial formulas for regularity of Kazhdan-Lusztig ideals $J_{v,w}$ when $v$ is a $321$-avoiding permutation (Section \ref{sec:321RegPrelims}). We then provide an easily-computable  combinatorial formula for the regularity of open patches of Schubert varieties in Grassmannians (Section \ref{sec:regGrassPatches}). This proves a (generalization of a) conjecture from \cite{RRRSW} giving a correction to a conjecture of \cite{KLSS}.

\subsection{Kazhdan-Lusztig ideals}\label{sec:KLBackground}
We next recall Kazhdan-Lusztig ideals, which were introduced by A. Woo and A. Yong in \cite{WooYongSings} to study singularities of Schubert varieties. Given a permutation matrix $v\in S_n$, consider the matrix $M^{(v)}$ which has $1$'s at locations $(i,v_i)$,  indeterminate $z_{ij}$ in location $(i,j)\in D(v)$, and $0$'s elsewhere. Let $\Bbbk[{\bf z}^v]:= \Bbbk[z_{ij}\,:  \,(i,j)\in D(v)]$. Given $w\in S_n$, define the \textbf{Kazhdan-Lusztig ideal} $J_{v,w}\subseteq \Bbbk[{\bf z}^v]$ to be
\[
J_{v,w} = \langle \text{minors of size } r_w(i,j)+1 \text{ in } M^{(v)}_{[i],[j]}\,:  \, (i,j)\in \Ess(w)\rangle,
\]
which is not the unit ideal precisely when $w\leq v$ in Bruhat order. 
 The Kazhdan-Lusztig ideal $J_{v,w}$ is the prime defining ideal of the intersection of the Schubert variety $B_-\backslash \overline{B_- w B_+}\subseteq B_-\backslash \GL_n(\Bbbk)$ with the opposite Schubert cell $B_-\backslash B_-vB_-$ (see \cite[Corollary 3.3]{WooYongSings} and the preceding discussion). Furthermore, $\Bbbk[{\bf z}^{v}]/J_{v,w}$ is Cohen-Macaulay. This follows by \cite[Lemma A.4]{Kazhdan-Lusztig} together with the Cohen-Macaulayness of Schubert varieties \cite{Ramanathan}. See \cite[Section 3.2]{WooYongSings} for further discussion.

 Kahzdan-Lusztig ideals are not always homogeneous with respect to the standard grading on $\Bbbk[{\bf z}^{(v)}]$. However, when $v$ is 321-avoiding, and hence when $v$ is a Grassmannian permutation, $J_{v,w}$ is homogeneous with respect to the standard grading, see e.g., \cite[Footnote on pg. 25]{Knutson-Frob}.
Some further partial results on the problem of when Kazhdan-Lusztig ideals are homogeneous with respect to the standard grading \cite[Problem 5.5]{WooYongSings} can be found in the recent preprint \cite{Neye}.

\subsection{Preliminaries on regularity of Kazhdan-Lusztig ideals $J_{v,w}$ where $v$ is $321$-avoiding}\label{sec:321RegPrelims}
We next describe a formula for the regularity of $\Bbbk[{\bf z}^v]/J_{v,w}$ where $J_{v,w}$ is a standard-graded Kazhdan-Lusztig ideal. This formula will be in terms of $\mathfrak G_w({\mathbf x};{\mathbf y})$, a double Grothendieck polynomial. 
Let $G_w(\mathbf{x};\mathbf{y})$ denote the double Grothendieck polynomials in \cite{KM}, so that  $G_w(\mathbf{x};\mathbf{y})=\mathfrak G_w({\mathbf 1-\mathbf x};{\mathbf 1-\frac{\mathbf 1}{\mathbf y}})$. We also let $G_{v,w}({\bf t}) = \mathfrak{G}_{v,w}(1-{\bf t})$.

The torus $T^n$ acts on the opposite Schubert cell $B_-\backslash B_- v B_-$ by right multiplication. This induces a grading on $\Bbbk[{\bf z}^v]$ where variable $z_{ij}$ in the matrix $M^{(v)}$ has degree $e_{v(i)}-e_j$, where $e_i\in \mathbb{Z}^n$ denotes the $i^{\rm th}$ standard basis vector. By \cite[Theorem~4.5]{WooYongGrobner}, the $K$-polynomial of $\Bbbk[{\bf z}^v]/J_{v,w}$ for this $\mathbb{Z}^n$-grading is given by
\begin{equation}\label{eq:kPolyKLzn}
    K(\Bbbk[{\bf z}^v]/J_{v,w};{\bf t}) = G_w(t_{v(1)},\dots,t_{v(n)};t_1,\dots, t_n) = G_{v,w}(t_{ij}\mapsto t_{v(i)}/t_j).
\end{equation}

Note that the conventions in \cite{WooYongGrobner} differ from ours.

In the case where $v$ is 321-avoiding, there is a coarsening of the grading $f:\mathbb{Z}^n\rightarrow \mathbb{Z}$ which gives each $z_{ij}\in \Bbbk[z_{ij}]$ degree 1. Specifically, take $f(e_i) = 1$ if there exists $k>i$ such that $v^{-1}(k)<v^{-1}(i)$ and $f(e_i) = 0$ otherwise (see e.g., the footnote on page 25 of \cite{Knutson-Frob}). Then the $K$-polynomial of $\Bbbk[{\bf z}^v]/J_{v,w}$, with respect to the standard grading, is
\begin{align}\label{eq:kPolyKLst}
\begin{split}
  K(\Bbbk[{\bf z}^v]/J_{v,w};{ t}) &= G_w(t^{f(e_{v(1)})},\dots,t^{f(e_{v(n)})};t^{-f(e_1)},\dots, t^{-f(e_n)}) \\
    &= G_{v,w}(t_{ij}\mapsto t^{f(e_{v(i)})+f(e_j)}).  
\end{split}
\end{align}

\begin{example}
Let $v=34512$ and $w=21435$. Using Equation~\eqref{eq:unspecgroth}, we may compute $\mathfrak{G}_{v,w}({\bf t})=t_{11}t_{31}+t_{11}t_{22}-t_{11}t_{22}t_{31}$. For the $\mathbb{Z}^n$-grading, the substitution provided in Equation~\eqref{eq:kPolyKLzn} yields 
\[K(\Bbbk[{\bf z}^v]/J_{v,w};{\bf t}) =(1-\frac{t_{3}}{t_{1}})(1-\frac{t_{5}}{t_{1}})+(1-\frac{t_{3}}{t_{1}})(1-\frac{t_{4}}{t_{2}})-(1-\frac{t_{3}}{t_{1}})(1-\frac{t_{5}}{t_{1}})(1-\frac{t_{4}}{t_{2}}).\]
Using Theorem~\ref{thm:321svtGroth}, we may compute 
\[\mathfrak G_w({\mathbf x};{{\mathbf y}})=\frac{x_{1}}{y_{1}}\frac{x_{3}}{y_{1}}
+\frac{x_{1}}{y_{1}}\frac{x_{2}}{y_{2}}
+\frac{x_{1}}{y_{1}}\frac{x_{1}}{y_{3}}
-\frac{x_{1}}{y_{1}}\frac{x_{1}}{y_{3}}\frac{x_{2}}{y_{2}}
-\frac{x_{1}}{y_{1}}\frac{x_{1}}{y_{3}}\frac{x_{3}}{y_{1}}
-\frac{x_{1}}{y_{1}}\frac{x_{2}}{y_{2}}\frac{x_{3}}{y_{1}}
+\frac{x_{1}}{y_{1}}\frac{x_{1}}{y_{3}}\frac{x_{2}}{y_{2}}\frac{x_{3}}{y_{1}}.\]
Combining this with Equation~(\ref{eq:kPolyKLst}) yields
\[K(\Bbbk[{\bf z}^v]/J_{v,w};{\bf t}) =G_w(1,1,1,t,t;t^{-1},t^{-1},1,1,1)=2(1-t)^2-(1-t)^3\]
under the $\mathbb{Z}$-grading.
\end{example}

\begin{lemma}\label{lemma:321weight}
Let $v\in S_n$ such that $v$ is $321$-avoiding.  If $(i,j)\in D(v)$, then $f(e_j)=1$ and $f(e_{v_i})=0$.
\end{lemma}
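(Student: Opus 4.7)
The plan is to unpack the definitions of $D(v)$ and $f$ and reduce both claims to direct consequences of the Rothe diagram condition. Recall that $(i,j) \in D(v)$ means $v_i > j$ and $v^{-1}(j) > i$, so there is a value $v_i$ in row $i$ strictly larger than $j$, while the value $j$ itself sits in a row strictly below row $i$. These two pieces of data already contain the witnesses required by the definition of $f$.

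For the claim $f(e_j) = 1$, I would take $k := v_i$ as the required witness: by the definition of $D(v)$, we have $k > j$ and $v^{-1}(k) = i < v^{-1}(j)$, so the condition for $f(e_j) = 1$ is satisfied immediately. This step requires no hypothesis on $v$ whatsoever.

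The claim $f(e_{v_i}) = 0$ is the substantive part, and it is precisely where the $321$-avoiding hypothesis enters. I would argue by contradiction: assume $f(e_{v_i}) = 1$, so there exists $k > v_i$ with $a := v^{-1}(k) < v^{-1}(v_i) = i$. Setting $b := v^{-1}(j)$, the Rothe condition gives $b > i$, while $v_b = j < v_i$. Collecting these, positions $a < i < b$ carry values $v_a = k > v_i > j = v_b$, exhibiting a $321$ pattern in $v$ and contradicting the hypothesis.

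The only (very mild) obstacle is the observation that the Rothe condition simultaneously provides an ``upper'' witness $v_i$ (a large value high up) and a ``lower'' witness $j$ (a small value low down), so any hypothetical extra value violating $f(e_{v_i}) = 0$ must slot in above to complete the forbidden $321$ pattern. Beyond recognizing this, both verifications are immediate from the definitions, so I do not expect any technical difficulty.
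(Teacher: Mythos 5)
Your argument is correct and is essentially the same as the paper's: the witness $k = v_i$ gives $f(e_j)=1$ directly from the Rothe condition, and the contradiction for $f(e_{v_i})=0$ assembles the $321$ pattern from positions $v^{-1}(k) < i < v^{-1}(j)$ with values $k > v_i > j$, exactly as in the published proof. No gaps.
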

\begin{proof}
Since $(i,j)\in D(v)$, there is $k=v_i$ such that $v_i>j$ and $i<v^{-1}_j$, thus $f(e_j)=1$.
If $f(e_{v_i})=1$, then there is $k>v_i$ such that $v^{-1}_k<i$. This would then imply that there is a $321$-pattern in $v$. In particular, we would have $v^{-1}_k<i<v^{-1}_j$, with $j<v_i<k$. As $v$ is $321$-avoiding, we conclude that $f(e_{v_i}) = 0$. 
\end{proof}

\begin{lemma}\label{lem:321KFormula}
Let $v,w\in S_n$ such that $v$ is $321$-avoiding and $w\leq v$.
Then 
\begin{equation}
   K(\Bbbk[{\bf z}^v]/J_{v,w};t) = G_{v,w}(t_{ij} \mapsto t) = \sum_{P\in \KPipes(v,w)}(-1)^{\#P-\ell(w)}(1-t)^{\#P}.
\end{equation}
\end{lemma}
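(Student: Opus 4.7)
The plan is to chain together the definitions already set up in Sections 2 and 5.1 and observe that the nontrivial content has essentially been done in Lemma~\ref{lemma:321weight}. First, I would invoke Equation~\eqref{eq:kPolyKLst}: since $v$ is $321$-avoiding, the standard grading is the coarsening of the torus grading under $f$, so
\[
K(\Bbbk[{\bf z}^v]/J_{v,w};t) \;=\; G_{v,w}\bigl(t_{ij}\mapsto t^{f(e_{v(i)})+f(e_j)}\bigr).
\]
Since $\KPipes(v,w)$ consists of subsets of $D(v)$, only the variables $t_{ij}$ with $(i,j)\in D(v)$ actually appear in $\mathfrak{G}_{v,w}(\mathbf t)$, hence in $G_{v,w}({\mathbf t})$.

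Next I would apply Lemma~\ref{lemma:321weight}: for any $(i,j)\in D(v)$, we have $f(e_j)=1$ and $f(e_{v(i)})=0$, so $f(e_{v(i)})+f(e_j)=1$. Therefore each substitution $t_{ij}\mapsto t^{f(e_{v(i)})+f(e_j)}$ that actually appears reduces to $t_{ij}\mapsto t$. This yields the first claimed equality $K(\Bbbk[{\bf z}^v]/J_{v,w};t)=G_{v,w}(t_{ij}\mapsto t)$.

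For the second equality, I would combine the definition $G_{v,w}(\mathbf t)=\mathfrak{G}_{v,w}(1-\mathbf t)$ from Section~5.1 with Equation~\eqref{eq:unspecgroth} to obtain
\[
G_{v,w}(\mathbf t)\;=\;\sum_{P\in \KPipes(v,w)}(-1)^{\#P-\ell(w)}\prod_{(i,j)\in P}(1-t_{ij}).
\]
Specializing $t_{ij}\mapsto t$ collapses the product over $(i,j)\in P$ into $(1-t)^{\#P}$, producing exactly the stated sum.

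There is no real obstacle here; the only thing that needs to be checked is the compatibility of conventions (the sign $(-1)^{\#P-\ell(w)}$ coming from \eqref{eq:unspecgroth} matches that in the claim, and the substitution $\mathbf t\mapsto 1-\mathbf t$ passing between $\mathfrak G_{v,w}$ and $G_{v,w}$ is applied in the right direction). The key conceptual input is Lemma~\ref{lemma:321weight}, which uses the $321$-avoiding hypothesis; everything else is formal manipulation of the formulas from Section~\ref{sec:pipeCompl} and Section~\ref{sec:KLBackground}.
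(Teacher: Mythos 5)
Your proposal is correct and follows exactly the paper's own argument: apply Equation~\eqref{eq:kPolyKLst}, use Lemma~\ref{lemma:321weight} to reduce the exponent $f(e_{v(i)})+f(e_j)$ to $1$ on $D(v)$, and then expand via Equation~\eqref{eq:unspecgroth} together with $G_{v,w}(\mathbf t)=\mathfrak G_{v,w}(1-\mathbf t)$. No gaps; this matches the paper's proof.
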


\begin{proof}
 The coarsening of the grading $f:\mathbb{Z}^n\rightarrow \mathbb{Z}$ combined with
Lemma \ref{lemma:321weight} ensures that $t^{f(e_{v(i)})+f(e_j)} = t$ for $(i,j)\in D(v)$.  
Thus, the result follows by Equations \eqref{eq:unspecgroth} and \eqref{eq:kPolyKLst} together with the fact that $G_{v,w}({\bf t}) = \mathfrak{G}_{v,w}(1-{\bf t})$.
\end{proof}

We will use the following to prove the main result of this section (Theorem \ref{thm:KLSSCorrection}).  

\begin{proposition}
\label{prop:pipeKLreg}
Let $v,w\in S_n$ such that $v$ is $321$-avoiding and $w\leq v$.  Then,
\begin{equation}\label{eq:keyDegFormula}
   \deg K(\Bbbk[{\bf z}^v]/J_{v,w};t) = \deg \mathfrak{G}_{v,w}(\bf{t}).
\end{equation}
Furthermore, the Castelnuovo-Mumford regularity of $\Bbbk[{\bf z}^v]/J_{v,w}$ is given by
\begin{equation}\label{eq:regInTermsOfPipes}
    \reg (\Bbbk[{\bf z}^v]/J_{v,w}) = \deg \mathfrak{G}_{v,w}({\bf{t}})-\#D(w) =  \max\{\#P\mid P\in \KPipes(v,w)\}-\#D(w).
\end{equation}
\end{proposition}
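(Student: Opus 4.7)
The plan is to derive both equalities from Lemma~\ref{lem:321KFormula} together with Equation~\eqref{eq:mainRegEquation}, using the Cohen-Macaulayness of $\Bbbk[{\bf z}^v]/J_{v,w}$ recalled in Section~\ref{sec:KLBackground}.

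First, I would establish \eqref{eq:keyDegFormula} by comparing top-degree contributions in the two pipe-dream expansions. From Equation~\eqref{eq:unspecgroth},
\[\mathfrak{G}_{v,w}({\bf t}) = \sum_{P\in \KPipes(v,w)}(-1)^{\#P-\ell(w)}\prod_{(i,j)\in P}t_{ij}.\]
Since distinct subsets $P\subseteq D(v)$ yield distinct squarefree monomials $\prod_{(i,j)\in P}t_{ij}$, no cancellation is possible, and therefore $\deg \mathfrak{G}_{v,w}({\bf t}) = N$ where $N := \max\{\#P : P\in \KPipes(v,w)\}$. On the other hand, Lemma~\ref{lem:321KFormula} gives
\[K(\Bbbk[{\bf z}^v]/J_{v,w};t) = \sum_{P\in \KPipes(v,w)}(-1)^{\#P-\ell(w)}(1-t)^{\#P}.\]
Expanding each $(1-t)^{\#P}$ and extracting the coefficient of $t^N$, only the terms with $\#P=N$ contribute, giving $(-1)^{N-\ell(w)}(-1)^N \cdot \#\{P\in \KPipes(v,w): \#P = N\}$. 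All summands have the same sign, so this coefficient is nonzero, and therefore $\deg K(\Bbbk[{\bf z}^v]/J_{v,w};t) = N = \deg \mathfrak{G}_{v,w}({\bf t})$, proving \eqref{eq:keyDegFormula} and simultaneously the rightmost equality in \eqref{eq:regInTermsOfPipes}.

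For the middle equality in \eqref{eq:regInTermsOfPipes}, I would invoke \eqref{eq:mainRegEquation}, which applies because $\Bbbk[{\bf z}^v]/J_{v,w}$ is Cohen-Macaulay and standard-graded (since $v$ is $321$-avoiding). The required height computation $\mathrm{ht}_{\Bbbk[{\bf z}^v]}(J_{v,w}) = \#D(w)$ follows from the fact that $\mathrm{Spec}(\Bbbk[{\bf z}^v]/J_{v,w})$ is the Kazhdan-Lusztig variety, which has dimension $\ell(v)-\ell(w) = \#D(v)-\#D(w)$, while the ambient polynomial ring $\Bbbk[{\bf z}^v]$ has Krull dimension $\#D(v)$. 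Combining these yields
\[\reg(\Bbbk[{\bf z}^v]/J_{v,w}) = \deg K(\Bbbk[{\bf z}^v]/J_{v,w};t) - \#D(w) = \deg \mathfrak{G}_{v,w}({\bf t}) - \#D(w),\]
completing the proof.

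The main obstacle is essentially a bookkeeping one: verifying that the sign-coherence in Lemma~\ref{lem:321KFormula} prevents top-degree cancellation, and citing the correct dimension formula for the Kazhdan-Lusztig variety. Both follow directly from the framework established earlier in the paper, so no new technical ingredient is required.
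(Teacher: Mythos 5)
Your proposal is correct and follows essentially the same route as the paper: the degree identity comes from Lemma~\ref{lem:321KFormula} (the paper calls this step ``immediate,'' and your top-degree non-cancellation argument is exactly the justification), and the regularity formula then follows from Equation~\eqref{eq:mainRegEquation} together with $\mathrm{ht}_{\Bbbk[{\bf z}^v]}J_{v,w}=\#D(w)$. The only difference is that you supply the details the paper leaves implicit, including the dimension count for the height of $J_{v,w}$.
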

\begin{proof}
Equation~\eqref{eq:keyDegFormula} is immediate from Lemma \ref{lem:321KFormula}. Equation~\eqref{eq:regInTermsOfPipes} follows from Equations \eqref{eq:keyDegFormula}, \eqref{eq:mainRegEquation} and the fact that $\text{ht}_{\Bbbk[{\bf z}^v]}J_{v,w} = \#D(w)$.
\end{proof}

\subsection{Castelnuovo-Mumford regularity of patches of Grassmannian Schubert varieties}\label{sec:regGrassPatches}
In \cite{RRRSW}, we gave a counterexample to a conjecture of Kummini-Lakshmibai-Sastry-Seshadri from \cite{KLSS} on the Castelnuovo-Mumford regularity of coordinate rings of standard open patches of certain Schubert varieties in Grassmannians. We then gave a conjecture of a correct formula \cite[Conjecture 5.6]{RRRSW}. In this short subsection, we prove a generalization of this conjecture. 

Identify the Grassmannian $\text{Gr}(k,n)$ with $P\backslash \GL_n(\Bbbk)$ where $P\subseteq \GL_n(\Bbbk)$ is the parabolic subgroup of block lower triangular matrices with block sizes $k$ and $n-k$ down the diagonal. Let $u$ and $g$ be a pair of Grassmannian permutations with descent at $k$. The Kazhdan-Lusztig ideal $J_{u,g}$ is the prime defining ideal of the intersection of the Schubert variety $P\backslash \overline{P g B_+}\subseteq P\backslash \GL_n(\Bbbk)$ with the open set $P\backslash P u B_-\subseteq P\backslash \GL_n(\Bbbk)$. The following theorem gives the regularity of the coordinate rings of these open sets of Grassmannian Schubert varieties.

\begin{theorem}\label{thm:KLSSCorrection}
Fix Grassmannian permutations $g$ and $u$ with descent at position $k$ so that $\lambda(g)\subseteq \lambda(u)$. Let $v$ be the vexillary permutation such that $D(v)=\maxexcited(\lambda(u),\lambda(g))$. 
 Then, \[\reg(\mathbb{\Bbbk}[{\bf z}^u]/J_{u,g})=\deg (\mathfrak{G}_v({\bf x})) - |\lambda(g)| = \sum_{i=1}^n \maxsizeantidiag(\tau_i(v)).\]
\end{theorem}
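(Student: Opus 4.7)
The plan is to chain together three results already in the paper: Proposition~\ref{prop:pipeKLreg}, Theorem~\ref{thm:grassToVex}, and Theorem~\ref{thm:vexDeg}. Since $u$ is Grassmannian, it is in particular $321$-avoiding, and $J_{u,g}$ is homogeneous with respect to the standard grading. Moreover $g\leq u$ in Bruhat order (this is forced by $\lambda(g)\subseteq \lambda(u)$ for Grassmannians with the same descent $k$), so $J_{u,g}$ is a proper ideal.

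First I would apply Proposition~\ref{prop:pipeKLreg} with the pair $(u,g)$ (playing the role of $(v,w)$ there) to obtain
\[
\reg(\Bbbk[{\bf z}^u]/J_{u,g})=\deg \mathfrak G_{u,g}({\bf t})-\#D(g).
\]
Since $g$ is Grassmannian with descent at $k$, $\#D(g)=|\lambda(g)|$ (its Rothe diagram is, after left justification, the Young diagram $\lambda(g)$). Next I invoke Theorem~\ref{thm:grassToVex}, which gives the identity $\deg \mathfrak G_{u,g}({\bf t})=\deg \mathfrak G_v({\bf x})$ for the specific vexillary permutation $v$ characterized by $D(v)=\maxexcited(\lambda(u),\lambda(g))$ (which exists uniquely by Lemma~\ref{lem:grassToVex}). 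Combining the two equalities yields the first claimed identity
\[
\reg(\Bbbk[{\bf z}^u]/J_{u,g})=\deg \mathfrak G_v({\bf x})-|\lambda(g)|.
\]

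For the second equality, I apply Theorem~\ref{thm:vexDeg} to the vexillary permutation $v$, giving
\[
\deg \mathfrak G_v({\bf x})=\#D(v)+\sum_{i=1}^n \maxsizeantidiag(\tau_i(v)).
\]
It remains to observe that $\#D(v)=|\lambda(g)|$. This is because excited moves (both ordinary and K-theoretic) preserve cardinality when passing from $D_{\tt top}(\lambda(u),\lambda(g))$ to $\maxexcited(\lambda(u),\lambda(g))=D(v)$, and $\#D_{\tt top}(\lambda(u),\lambda(g))=|\lambda(g)|$. Substituting then collapses the formula to $\sum_i \maxsizeantidiag(\tau_i(v))$.

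There is essentially no obstacle here, as the theorem is a genuine corollary of the machinery already developed. The only small point to verify carefully is that the height/standard-grading hypotheses needed for Proposition~\ref{prop:pipeKLreg} apply to $(u,g)$, which follows from the fact that Grassmannian permutations are $321$-avoiding together with the Cohen-Macaulayness of $\Bbbk[{\bf z}^u]/J_{u,g}$ noted in Section~\ref{sec:KLBackground}.
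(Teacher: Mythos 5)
Your proposal is correct and follows essentially the same route as the paper: Equation~\eqref{eq:regInTermsOfPipes} from Proposition~\ref{prop:pipeKLreg} (valid since the Grassmannian $u$ is $321$-avoiding), Theorem~\ref{thm:grassToVex} to convert $\deg\mathfrak G_{u,g}(\mathbf t)$ into $\deg\mathfrak G_v(\mathbf x)$, and Theorem~\ref{thm:vexDeg} together with the observation $\#D(v)=|\lambda(g)|$. The extra details you supply (why $\#D(g)=|\lambda(g)|$ and why excited moves preserve cardinality) are accurate and match what the paper leaves implicit.
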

\begin{proof}
The first equality follows due to Equation~\eqref{eq:regInTermsOfPipes}, Theorem \ref{thm:grassToVex}, and the fact that $\text{ht}_{\Bbbk[{\bf z}^u]} J_{u,g} = |\lambda(g)|$. The second equality is then immediate by Theorem \ref{thm:vexDeg} and the fact that $|\lambda(g)| = \#D(v)$ by construction of $v$.
\end{proof}

We note that \cite[Conjecture 5.6]{RRRSW} concerned the special case of the above theorem where $u = (n-k+1)~(n-k+2)\dots n~1~2\dots (n-k)$, written in one line notation.

\section{Regularity of ladder determinantal ideals}\label{sec:regLad}

Our next goal is to provide a formula for the Castelnuovo-Mumford regularity of any \emph{one-sided ladder determinantal ideal}. Ladder determinantal ideals are generalized determinantal ideals which were introduced by S.~S. Abhyankar \cite{Abhyankar} to study singularities of Schubert varieties. There has since been substantial interest in their properties. For example, see \cite{Narasimhan, HT, Conca, Conca2, ConcaHerzog, GonLak, GM, KMY, Gorla, GMN, GK15} and references therein. The work of Ghorpade and Krattenthaler \cite{GK15} on $a$-invariants of certain ladder determinantal ideals is most closely related to our results. This is discussed in more detail at the end of Section \ref{sec:ladderBackground}.

\subsection{One-sided ladder determinantal ideals}\label{sec:ladderBackground}

A \emph{ladder} $L$ is a Young diagram (in English notation) filled with distinct indeterminates. Observe that a ladder is determined by a collection of southeast corners $L^{SE} = \{(a_i,b_i)\}_{i\in [s]}$ ordered northeast to southwest. Label the northwest corner of $L$ to be $(0,0)$. Take $(a_{s+1},b_{s+1})$ to be the southwestmost corner of the ladder and take $(a_1,b_1)$ be the northeastmost corner of the ladder.

Let $\mathcal{P}$ denote the lattice path from $(a_{s+1},b_{s+1})$ to $(a_1,b_1)$ which travels along the boundary of the ladder, so that cells weakly northwest of the $\mathcal{P}$ are in $L$ and boxes weakly southeast of $\mathcal{P}$ are not in $L$. 
Let $P = \{(c_j,d_j)\}_{j\in[s']}$ denote a collection of distinguished points along $\mathcal{P}$. To each $(c_j,d_j)\in P$, assign a value $r_j\in \mathbb{Z}_{> 0}$. Let $L_{I,J}$ denote the subset of $L$ with row indices in $I$ and column indices in $J$ for $I,J\subseteq[n]$. 

Let $\Bbbk[L]$ denote the polynomial ring generated by these indeterminate entries. 
Define the \emph{one-sided mixed ladder determinantal ideal} $I_{L,{\bf{r}}}$:
\[I_{L,{\bf{r}}} = \langle \text{minors of size } r_j \text{ in } L_{[c_j],[d_j]}\,:  \, j\in[s']\rangle\subseteq \Bbbk[L].\] 
Letting $I_j$ denote the ideal of $r_j\times r_j$ minors of $L_{[c_j],[d_j]}$, one observes that
\[
I_{L,{\bf r}} = \sum_{j\in[s']} I_j.
\]
Following \cite{KM}, we assume 
\begin{equation}\label{eq:laddRed}
    0<c_1-r_1<c_2-r_2<\dots<c_{s'}-r_{s'} \text{ and } 0<d_1-r_1<d_2-r_2<\dots<d_{s'}-r_{s'}
\end{equation}
 so that 
$I_j\subsetneq I_k$ for any $j\neq k$, $j,k\in [s']$. 
As outlined in \cite[Proposition 9.6]{Fulton.Flags}, $L$ can be identified with a vexillary matrix Schubert variety $\overline{X}_v$ where $\Ess{(v)}$ are the boxes indexed by $P$ and the ranks satisfy $r_v(c_j,d_j)=r_j-1$.

\begin{example}\label{ex:1sidelad}
To the left is a ladder $L$. Then $L^{\sf SE}=\{(5,3),(3,5)\}$ with marked points and corresponding ranks given in red. To the right is the associated permutation $v$.
\[\begin{picture}(280,65)
\put(0,35){$L=$}
\put(30,-10){\begin{tikzpicture}[scale=.5]
\draw[line width = .1ex, gray] (0,4) -- (5,4);
\draw[line width = .1ex, gray] (0,3) -- (5,3);
\draw[line width = .1ex, gray] (0,2) -- (3,2);
\draw[line width = .1ex, gray] (0,1) -- (3,1);

\draw[line width = .1ex, gray] (1,5) -- (1,0);
\draw[line width = .1ex, gray] (2,5) -- (2,0);
\draw[line width = .1ex, gray] (3,5) -- (3,2);
\draw[line width = .1ex, gray] (4,5) -- (4,2);

\draw[line width = .25ex] (0,5)--(5,5)--(5,2)--(3,2)--(3,0)--(0,0)--(0,5);
\filldraw[red] (5,2) circle (1ex);
\filldraw[red] (3,2) circle (1ex);
\filldraw[red] (3,0) circle (1ex);
\put(1,62){\scriptsize{$z_{11}$}}
\put(15,62){\scriptsize{$z_{12}$}}
\put(30,62){\scriptsize{$z_{13}$}}
\put(44,62){\scriptsize{$z_{14}$}}
\put(57.5,62){\scriptsize{$z_{15}$}}
\put(1,48){\scriptsize{$z_{21}$}}
\put(15,48){\scriptsize{$z_{22}$}}
\put(30,48){\scriptsize{$z_{23}$}}
\put(44,48){\scriptsize{$z_{24}$}}
\put(57.5,48){\scriptsize{$z_{25}$}}
\put(1,34){\scriptsize{$z_{31}$}}
\put(15,34){\scriptsize{$z_{32}$}}
\put(30,34){\scriptsize{$z_{33}$}}
\put(44,34){\scriptsize{$z_{34}$}}
\put(57.5,34){\scriptsize{$z_{35}$}}
\put(1,20){\scriptsize{$z_{41}$}}
\put(15,20){\scriptsize{$z_{42}$}}
\put(30,20){\scriptsize{$z_{43}$}}
\put(1,6){\scriptsize{$z_{51}$}}
\put(15,6){\scriptsize{$z_{52}$}}
\put(30,6){\scriptsize{$z_{53}$}}
\put(77,25){\small{$\textcolor{red}3$}}
\put(47,19){\small{$\textcolor{red}2$}}
\put(47,-3){\small{$\textcolor{red}3$}}
\end{tikzpicture}}
\put(150,-10){\begin{tikzpicture}[scale=.43]
\draw (0,0) rectangle (6,6);

\draw (1,5) rectangle (3,3);
\draw[line width = .1ex] (1,4) -- (3,4);
\draw[line width = .1ex] (2,3) -- (2,5);

\draw (4,4) rectangle (5,3);
\draw (2,1) rectangle (3,2);

\filldraw (0.5,5.5) circle (.5ex);
\draw[line width = .2ex] (0.5,0) -- (0.5,5.5) -- (6,5.5);
\filldraw (3.5,4.5) circle (.5ex);
\draw[line width = .2ex] (3.5,0) -- (3.5,4.5) -- (6,4.5);
\filldraw (5.5,3.5) circle (.5ex);
\draw[line width = .2ex] (5.5,0) -- (5.5,3.5) -- (6,3.5);
\filldraw (1.5,2.5) circle (.5ex);
\draw[line width = .2ex] (1.5,0) -- (1.5,2.5) -- (6,2.5);
\filldraw (4.5,1.5) circle (.5ex);
\draw[line width = .2ex] (4.5,0) -- (4.5,1.5) -- (6,1.5);
\filldraw (2.5,0.5) circle (.5ex);
\draw[line width = .2ex] (2.5,0) -- (2.5,0.5) -- (6,0.5);
\end{tikzpicture}}
\end{picture}
\]
 Then 
 \begin{align*}
     I_{L,\bf r}&=\langle 3-\text{minors of } L_{[5],[3]}, 2-\text{minors of } L_{[3],[3]} ,3-\text{minors of } L_{[3],[5]}  \rangle\\
     &=\langle \det(L_{[3],\{3,4,5\}}), 2-\text{minors of } L_{[3],[3]} ,\det(L_{\{3,4,5\},[3]})  \rangle.  \qedhere
 \end{align*}
\end{example}

For certain one-sided mixed ladder determinantal ideals, regularity formulas can be deduced through $a$-invariant formulas of Ghorpade-Krattenthaler \cite{GK15}. Their formulas give results in the case in which $(r_1,r_2,\ldots,r_{s'})=(1,2,\ldots,t,t-1,\ldots,1)$ for some $t\in\mathbb{Z}_{>0}$, where Equation~(\ref{eq:laddRed}) is not imposed. Thus, for example, $L$ as in Example \ref{ex:1sidelad} is not in the class of ladders considered in \cite{GK15}. We note that an algorithm for $a$-invariant formulas is given in \cite{GK15} for two-sided mixed ladder determinantal ideals with the same restriction on ranks.

\subsection{One-sided ladder determinantal ideals via Grassmannian Kazhdan-Lusztig ideals}\label{sec:regLadder1}

We now recall that each one-sided ladder determinantal ideal is a Kazhdan-Lusztig ideal $\mathcal{N}_{u,g}$ where $u$ and $g$ are Grassmannian permutations. This was first shown by Gonciulea-Miller \cite[Theorem 4.7.3]{GM}; we include it here for completeness.

Take a ladder $L$ with $L^{\sf SE}=\{(a_i,b_i)\}_{i\in[s]}$, 
and marked points $P = \{(c_j,d_j)\}_{j\in[s']}$ assigning ranks $r_j$.
 Define $u\in S_{x+y}$ as the concatenation of partial permutations $u_i$, where for $i\in[s]$
\begin{align}
\label{eqn:oneSidedPatch}
\begin{split}
    u_{i}&= {\sf id}_{a_i-a_{i+1}}+b_{i}+a_{0}-a_{i}, \text{ and} \\
u_{s+1}&=[x+y]\setminus \cup_{i\in[s]}u_i.
\end{split}
\end{align}

Set 
$(c_0,d_0):=(a_0,b_0)$ with $r_0=1$ and 
$(c_{s'+1},d_{s'+1}):=(a_{s+1},b_{s+1})$ with $r_{s'+1}=1$.
Define $g\in S_{x+y}$ as the concatenation of partial permutations $g_i$, 
where for $i\in[s'+1]$,
\begin{align}
\label{eqn:oneSidedPatch2}
\begin{split}
    g_{i}&= {\sf id}_{k_i-k_{i-1}}+k_{i-1}+h_{i-1}, \text{ and} \\
g_{s'+2}&=[x+y]\setminus \cup_{i\in[s'+1]}g_i.
\end{split}
\end{align}
where $k_i=c_0-c_i+r_i-1$ and $h_i=d_i-r_i$.

Note that Equation~\eqref{eq:laddRed} and the assumption that each indeterminate appears in at least one minor ensure that $L(\code{(u)})=L(\code{(g)})$ and $u_j\geq g_j$ for each $j\in[x+y]$. 
Then 
by \cite[Theorem 4.7.3]{GM} we have the following:
\begin{proposition}\label{prop:KLladder}
Given a one-sided ladder determinantal ideal $I_{L,r}$ and $u,g$ as above,
$J_{u,g}$ and $I_{L,r}$ share the same generators.
\end{proposition}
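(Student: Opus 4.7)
The plan is to unwind the definitions on both sides and verify the generating sets coincide via a careful combinatorial bookkeeping argument. Recall that $J_{u,g}$ is generated by size $r_g(i,j)+1$ minors of the submatrices $M^{(u)}_{[i],[j]}$ as $(i,j)$ ranges over $\Ess(g)$, while $I_{L,\mathbf{r}}$ is generated by size $r_j$ minors of $L_{[c_j],[d_j]}$ as $(c_j,d_j)$ ranges over the marked points $P$. So the proof amounts to four claims: (a) $u$ and $g$ are Grassmannian permutations with $g\leq u$ in Bruhat order; (b) the indeterminate positions of $M^{(u)}$ (equivalently $D(u)$) are exactly the cells of $L$ under the natural identification; (c) the essential set $\Ess(g)$ corresponds precisely to the marked points $P$; (d) at each such point, the submatrix and rank condition agree.

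First I would verify (a) directly from the construction in Equations~\eqref{eqn:oneSidedPatch} and \eqref{eqn:oneSidedPatch2}. Each $u_i$ and $g_i$ is an increasing block, concatenated so that the resulting one-line notations have at most one descent (at position $L(\code(u))=L(\code(g))$); this follows because Equation~\eqref{eq:laddRed} forces the blocks $u_i$ (resp.\ $g_i$) to be increasing in starting value, and the tail $u_{s+1}$ (resp.\ $g_{s'+2}$) fills in the complement in increasing order. The inequality $u_j\geq g_j$ pointwise, already noted in the excerpt, gives $g\leq u$ in Bruhat order for Grassmannian permutations.

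Next I would compute $D(u)$ explicitly. Because $u$ is Grassmannian with descent at $L(\code(u))$ and each block $u_i$ is a shifted identity, the code of $u$ reads off the column-differences $b_i - (b_{i+1})$ across the rows $a_{i+1}+1,\ldots,a_i$, so that the Rothe diagram $D(u)$, justified to the left, is precisely the Young-diagram shape of $L$ rotated appropriately. In other words, the cell $(i,j)\in D(u)$ corresponds to entry $z_{ij}$ of $L$. This proves (b). For (c), I would compute $D(g)$ in the same fashion using Equation~\eqref{eqn:oneSidedPatch2}: the role of $k_i = c_0-c_i+r_i-1$ is to place the $i$th staircase of $D(g)$ at row $c_i$ and the role of $h_i = d_i - r_i$ is to place it at column $d_i$. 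The southeast corner of each staircase lies at $(c_i,d_i)$, and by construction it is the unique cell of $D(g)$ with no box to its east nor south, making $\Ess(g) = \{(c_j,d_j) : j\in[s']\}$.

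For (d), the rank function evaluation $r_g(c_j,d_j)$ is simply the number of $1$'s in the permutation matrix of $g$ inside $[c_j]\times[d_j]$; by the block structure, these are exactly the $1$'s coming from $g_1,\ldots,g_j$, of which there are $k_j = c_j+r_j-1-c_0+c_0 = c_j - (\text{something}) + r_j - 1$—one computes this to be $r_j-1$, so that the minor size $r_g(c_j,d_j)+1$ equals $r_j$. Finally, the submatrix $M^{(u)}_{[c_j],[d_j]}$ agrees, after deleting the columns containing the fixed $1$'s of $u$ (which lie strictly northeast of any cell of $D(u)\cap([c_j]\times[d_j])$ and thus contribute only to rows/columns of a block-triangular form whose minors reduce to minors of the indeterminate block), with $L_{[c_j],[d_j]}$. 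Thus the ideals have the same generators.

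The main obstacle is the bookkeeping in step (d): one must carefully track how the fixed $1$'s of $M^{(u)}$ sit inside each window $[c_j]\times[d_j]$ and verify that a $(r_j\times r_j)$-minor of $M^{(u)}_{[c_j],[d_j]}$ either contains one of these $1$'s and reduces (up to sign) to a smaller minor of the purely indeterminate block $L_{[c_j],[d_j]}$, or is such a minor outright. Equation~\eqref{eq:laddRed} is crucial here to ensure the reductions line up consistently across all marked points, yielding the same ideal rather than just the same variety.
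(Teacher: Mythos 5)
Your overall plan (unwind both definitions, match diagrams, essential sets, and rank conditions, then reduce minors along the fixed $1$'s) is the natural one; the paper itself does not carry this out but simply defers to Gonciulea--Miller \cite[Theorem 4.7.3]{GM}. However, three of your intermediate claims are false, and the paper's own Examples~\ref{ex:1sidelad} and \ref{ex:ladVW} refute them. There one has $u=(4,5,8,9,10,1,2,3,6,7)$ and $g=(1,2,4,6,8,3,5,7,9,10)$, so $\Ess(g)=\{(5,3),(5,5),(5,7)\}$, which is \emph{not} the set of marked points $\{(5,3),(3,3),(3,5)\}$; this kills your claim (c). Nor does $D(u)$ coincide with $L$ under $(i,j)\mapsto z_{ij}$: for instance $(3,6)\in D(u)$ while $L$ has only five columns, and $(1,4)\notin D(u)$ while $z_{14}\in L$; the correct identification reverses rows and compresses out the columns occupied by the $1$'s of $u$, so claim (b) as stated is wrong. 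Finally, the minor sizes do not match on the nose: at the essential box $(5,5)$ one has $r_g(5,5)+1=4$, whereas the corresponding ladder rank is $r=2$; the generators of $J_{u,g}$ here are $4$-minors of the $5\times 5$ window $M^{(u)}_{[5],[5]}$, which contains the $1$'s of $u$ at $(1,4)$ and $(2,5)$, and only after cofactor expansion along those two columns do they become $2$-minors of a $3\times 3$ all-indeterminate block. (Your formula $r_g(c_j,d_j)=r_j-1$, where you leave ``something'' uncomputed, already fails at the marked point $(3,3)$: $r_g(3,3)=2=r_j$, not $r_j-1$.)

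Your closing paragraph does gesture at the correct mechanism---expansion along the columns supporting a single fixed $1$---but applies it to the wrong windows ($[c_j]\times[d_j]$ instead of $[i]\times[j]$ for $(i,j)\in\Ess(g)$) and the wrong minor sizes. A correct argument must pair each essential box $(i,j)\in\Ess(g)$ with a marked point $(c_{j'},d_{j'})$ so that the window $M^{(u)}_{[i],[j]}$ contains exactly $r_g(i,j)+1-r_{j'}$ such columns, and must also address the converse inclusion. Even then the conclusion should be ideal equality rather than literal equality of generating sets: for example, a $5$-minor of $M^{(u)}_{[5],[7]}$ that uses column $4$ but not column $5$ expands to a nontrivial $\Bbbk[L]$-linear combination of ladder generators, not to a single generator of $I_{L,\mathbf{r}}$.
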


\begin{example}\label{ex:ladVW}
For $L$ as in Example~\ref{ex:1sidelad}, below are $D(u)$ and $D(g)$ for the $u,g$ as defined in Equations (\ref{eqn:oneSidedPatch}) and (\ref{eqn:oneSidedPatch2}). 
\[
\begin{tikzpicture}[scale=.35]
\draw (0,0) rectangle (10,10);

\draw (0,10) rectangle (3,5);
\draw (7,8) rectangle (5,5);

\draw (0,9)--(3,9);
\draw (0,8)--(3,8);
\draw (0,7)--(3,7);
\draw (0,6)--(3,6);
\draw (1,10)--(1,5);
\draw (2,10)--(2,5);
\draw (3,10)--(3,5);

\draw (7,7)--(5,7);
\draw (7,6)--(5,6);
\draw (6,8)--(6,5);

\filldraw (3.5,9.5) circle (.5ex);
\draw[line width = .2ex] (3.5,0) -- (3.5,9.5) -- (10,9.5);
\filldraw (4.5,8.5) circle (.5ex);
\draw[line width = .2ex] (4.5,0) -- (4.5,8.5) -- (10,8.5);
\filldraw (7.5,7.5) circle (.5ex);
\draw[line width = .2ex] (7.5,0) -- (7.5,7.5) -- (10,7.5);
\filldraw (8.5,6.5) circle (.5ex);
\draw[line width = .2ex] (8.5,0) -- (8.5,6.5) -- (10,6.5);
\filldraw (9.5,5.5) circle (.5ex);
\draw[line width = .2ex] (9.5,0) -- (9.5,5.5) -- (10,5.5);
\filldraw (0.5,4.5) circle (.5ex);
\draw[line width = .2ex] (0.5,0) -- (0.5,4.5) -- (10,4.5);
\filldraw (1.5,3.5) circle (.5ex);
\draw[line width = .2ex] (1.5,0) -- (1.5,3.5) -- (10,3.5);
\filldraw (2.5,2.5) circle (.5ex);
\draw[line width = .2ex] (2.5,0) -- (2.5,2.5) -- (10,2.5);
\filldraw (5.5,1.5) circle (.5ex);
\draw[line width = .2ex] (5.5,0) -- (5.5,1.5) -- (10,1.5);
\filldraw (6.5,0.5) circle (.5ex);
\draw[line width = .2ex] (6.5,0) -- (6.5,0.5) -- (10,0.5);
\end{tikzpicture}
\hspace{3em} 
\begin{tikzpicture}[scale=.35]
\draw (0,0) rectangle (10,10);

\draw (2,5) rectangle (3,8);
\draw (2,7) -- (3,7);
\draw (2,6) -- (3,6);

\draw (5,5) rectangle (4,7);
\draw (4,6) -- (5,6);

\draw (6,5) rectangle (7,6);

\filldraw (0.5,9.5) circle (.5ex);
\draw[line width = .2ex] (0.5,0) -- (0.5,9.5) -- (10,9.5);
\filldraw (1.5,8.5) circle (.5ex);
\draw[line width = .2ex] (1.5,0) -- (1.5,8.5) -- (10,8.5);
\filldraw (3.5,7.5) circle (.5ex);
\draw[line width = .2ex] (3.5,0) -- (3.5,7.5) -- (10,7.5);
\filldraw (5.5,6.5) circle (.5ex);
\draw[line width = .2ex] (5.5,0) -- (5.5,6.5) -- (10,6.5);
\filldraw (7.5,5.5) circle (.5ex);
\draw[line width = .2ex] (7.5,0) -- (7.5,5.5) -- (10,5.5);
\filldraw (2.5,4.5) circle (.5ex);
\draw[line width = .2ex] (2.5,0) -- (2.5,4.5) -- (10,4.5);
\filldraw (4.5,3.5) circle (.5ex);
\draw[line width = .2ex] (4.5,0) -- (4.5,3.5) -- (10,3.5);
\filldraw (6.5,2.5) circle (.5ex);
\draw[line width = .2ex] (6.5,0) -- (6.5,2.5) -- (10,2.5);
\filldraw (8.5,1.5) circle (.5ex);
\draw[line width = .2ex] (8.5,0) -- (8.5,1.5) -- (10,1.5);
\filldraw (9.5,0.5) circle (.5ex);
\draw[line width = .2ex] (9.5,0) -- (9.5,0.5) -- (10,0.5);
\end{tikzpicture}
\]
\end{example}
As a consequence to Proposition~\ref{prop:KLladder}, the K-polynomial of each one-sided ladder determinantal ideal can be expressed both as a single Grothendieck polynomial and as a specialized double Grothendieck polynomial. Combining this with \cite{Fulton.Flags}, we have:

\begin{corollary} 
	Given a one-sided ladder $L$ with marked points $P = \{(c_j,d_j)\}_{j\in[s']}$ assigning ranks $r_j$, \[\reg(S/I_L)=\reg(S/J_{u,g})=\sum_{k=1}^n \maxsizeantidiag(\tau_k(v)),\]
	where $u,g$ are as defined in Equations (\ref{eqn:oneSidedPatch}) and (\ref{eqn:oneSidedPatch2}). 
	Here $v$ is the vexillary permutation such that $\Ess{(v)}$ are the boxes indexed by $P$ and $r_v(c_j,d_j)=r_j-1$.
\end{corollary}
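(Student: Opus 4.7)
The strategy is to combine Proposition~\ref{prop:KLladder} with Theorem~\ref{thm:KLSSCorrection}, and then identify the two relevant vexillary permutations. By Proposition~\ref{prop:KLladder}, the ideals $I_{L,\mathbf{r}}$ and $J_{u,g}$ share the same generators, so they agree as ideals. Hence their quotient rings are isomorphic (after identifying their polynomial rings) and
\[\reg(S/I_{L,\mathbf{r}})=\reg(\Bbbk[{\bf z}^u]/J_{u,g}),\]
giving the first equality. For the second equality, Theorem~\ref{thm:KLSSCorrection} applied to the Grassmannian pair $(u,g)$ (valid since $u,g$ are Grassmannian with the same descent, and $\lambda(g)\subseteq\lambda(u)$ because each indeterminate of $L$ appears in at least one minor) yields
\[\reg(\Bbbk[{\bf z}^u]/J_{u,g})=\sum_{k=1}^n \maxsizeantidiag(\tau_k(v')),\]
where $v'$ denotes the unique vexillary permutation with $D(v')=\maxexcited(\lambda(u),\lambda(g))$.

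What remains is to show that $v'$ coincides with the vexillary permutation $v$ described in the statement, i.e., the Fulton vexillary permutation determined by $\Ess(v)=P$ and $r_v(c_j,d_j)=r_j-1$. For this I would invoke the uniqueness encoded in Lemma~\ref{lem:grassToVex}: a vexillary permutation is determined by its essential set together with the rank values on it, so it suffices to verify that $v'$ has essential set $P$ with the matching ranks. This in turn follows from unpacking Equations~(\ref{eqn:oneSidedPatch}) and (\ref{eqn:oneSidedPatch2}): the Grassmannian permutation $u$ is designed so that $\lambda(u)$ is precisely the shape of the ladder $L$ (with rows/columns placed along $\mathcal{P}$), while the parameters $k_i=c_0-c_i+r_i-1$ and $h_i=d_i-r_i$ in the definition of $g$ encode exactly the rank jumps prescribed at the marked points $(c_j,d_j)$. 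Translating $D(v')=\maxexcited(\lambda(u),\lambda(g))$ back to the ambient $n\times n$ grid then places essential cells at the positions $(c_j,d_j)$ with rank values $r_j-1$, as required.

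The main obstacle is the last combinatorial identification: it is purely a matter of matching indexing conventions, but the concatenation formulas for $u$ and $g$ together with the maximal excited diagram combinatorics are notation-heavy to track simultaneously. The cleanest approach is to work one marked point $(c_j,d_j)$ at a time, use the characterization of $\maxexcited(\lambda(u),\lambda(g))$ as the unique element of $\excited(\lambda(u),\lambda(g))$ admitting no further excited moves (cf.~\cite[Proposition 7.6]{Weigandt.BPD}), and observe that the southeastmost cells of $\maxexcited(\lambda(u),\lambda(g))$ land exactly at the marked points of $\mathcal{P}$ with rank gap $r_j-1$. Once this matching is in hand, the chain of equalities closes and the corollary follows immediately.
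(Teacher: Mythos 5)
Your overall structure is sound, but you take a genuinely different route from the paper for the formula, and your route leaves its one nontrivial step as a sketch. The paper gets the first equality exactly as you do, from Proposition~\ref{prop:KLladder} (identical generators, hence isomorphic quotients up to renaming variables). For the second equality, however, the paper does \emph{not} pass through Theorem~\ref{thm:KLSSCorrection}: it uses Fulton's observation (recalled just before Example~\ref{ex:1sidelad}) that $I_{L,\mathbf r}$ is, up to adjoining free variables, the Schubert determinantal ideal $I_v$ of the vexillary permutation $v$ with $\Ess(v)=P$ and $r_v(c_j,d_j)=r_j-1$, and then applies Theorem~\ref{thm:2ndmainTheorem} to that $v$ directly. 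This completely bypasses the step you flag as the ``main obstacle,'' namely showing that the Fulton permutation $v$ agrees with the permutation $v'$ satisfying $D(v')=\maxexcited(\lambda(u),\lambda(g))$ from Theorem~\ref{thm:KLSSCorrection}. That identification is true, but in your write-up it is only asserted via a sketch (``the southeastmost cells of $\maxexcited(\lambda(u),\lambda(g))$ land exactly at the marked points''), and it is precisely the notation-heavy verification your approach cannot avoid; as written, this is the gap between a proof and a plan. A smaller point: you cite Lemma~\ref{lem:grassToVex} for the claim that a vexillary permutation is determined by its essential set and rank values, but that lemma only asserts existence and uniqueness of a permutation with diagram $\maxexcited(\mu,\lambda)$; the determination of a permutation by essential-set data is Fulton's result. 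If you either carry out the $v=v'$ identification in full or switch to the paper's shortcut through $I_v$ and Theorem~\ref{thm:2ndmainTheorem}, the argument closes.
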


\bibliographystyle{plainurl}
\bibliography{KLreg}

\end{document}